\documentclass[10pt]{amsart}

\usepackage{amsmath,amssymb,amscd,amsthm,amsxtra}

\usepackage{graphicx, mathrsfs}
\usepackage{amssymb,amsmath,amsthm}
\usepackage{mathrsfs,dsfont}


\makeatletter
\DeclareRobustCommand\widecheck[1]{{\mathpalette\@widecheck{#1}}}
\def\@widecheck#1#2{%
   \box\z@\hbox{\m@th$#1#2$}%
   \box\tw@\hbox{\m@th$#1%
      \widehat{%
         \vrule\@width\z@\@height\ht\z@
         \vrule\@height\z@\@width\wd\z@}$}%
   \dp\tw@-\ht\z@
   \@tempdima\ht\z@ \advance\@tempdima2\ht\tw@ \divide\@tempdima\thr@@
   \box\tw@\hbox{%
      \raise\@tempdima\hbox{\scalebox{1}[-1]{\lower\@tempdima\box\tw@}}}%
   {\ooalign{\box\tw@ \cr \box\z@}}}
\makeatother

\headheight=8pt
\topmargin=0pt
\textheight=624pt
\textwidth=432pt
\oddsidemargin=18pt
\evensidemargin=18pt

\newtheorem{theorem}{Theorem} [section]

\newtheorem{proposition}[theorem]{Proposition}
\newtheorem{remark}[theorem]{Remark}
\newtheorem{example}{Example}

\newtheorem{definition}[theorem]{Definition}
\newtheorem{corollary}[theorem]{Corollary}

\begin{document}
\title{Randomization and the Gross-Pitaevskii hierarchy}

\author[V. Sohinger]{Vedran Sohinger}
\address{
University of Pennsylvania, Department of Mathematics, David Rittenhouse Lab, Office 3N4B, 209 South 33rd Street, Philadelphia, PA 19104-6395, USA}
\email{vedranso@math.upenn.edu}
\urladdr{http://www.math.upenn.edu/~vedranso/}
\thanks{V. S. was partially supported by a Simons Postdoctoral Fellowship.}

\author[G. Staffilani]{Gigliola Staffilani}
\address{
Massachusetts Institute of Technology, Department of Mathematics, Building E17, Office 330, 77 Massachusetts Avenue, Cambridge, MA 01239-4301, USA}
\email{gigliola@math.mit.edu}
\urladdr{http://math.mit.edu/~gigliola/}
\thanks{G. S. was partially supported by NSF Grant DMS-1068815.}

\begin{abstract}
We study the Gross-Pitaevskii hierarchy on the spatial domain $\mathbb{T}^3$. By using an appropriate randomization of the Fourier coefficients in the collision operator, we prove an averaged form of the main estimate which is used in order to contract the Duhamel terms that occur in the study of the hierarchy. In the averaged estimate, we do not need to integrate in the time variable. An averaged spacetime estimate for this range of regularity exponents then follows as a direct corollary.  The range of regularity exponents that we obtain is $\alpha>\frac{3}{4}$. It was shown in our previous joint work with Gressman \cite{GSS} that the range $\alpha>1$ is sharp in the corresponding deterministic spacetime estimate. This is in contrast to the non-periodic setting, which was studied by Klainerman and Machedon \cite{KM}, where the spacetime estimate is known to hold whenever $\alpha \geq 1$. The goal of our paper is to extend the range of $\alpha$ in this class of estimates in a \emph{probabilistic sense}.

We use the new estimate and the ideas from its proof in order to study randomized forms of the Gross-Pitaevskii hierarchy. More precisely, we consider hierarchies similar to the Gross-Pitaevskii hierarchy, but in which the collision operator has been randomized. For these hierarchies, we show convergence to zero in low regularity Sobolev spaces of Duhamel expansions of fixed deterministic density matrices. We believe that the study of the randomized collision operators could be the first step in the understanding of a nonlinear form of randomization.

\end{abstract}

\keywords{Gross-Pitaevskii hierarchy, Nonlinear Schr\"{o}dinger equation, Randomization, Bernoulli random variables, spacetime estimates, collision operator, density matrices, Duhamel iteration}
\subjclass[2010]{35Q55, 70E55}

\maketitle

\section{Introduction}
\subsection{Setup of the problem}
\label{Setup of the problem}

In this paper, we study the effect of randomization in the context of the \emph{Gross-Pitaevskii hierarchy} on the three-dimensional torus $\mathbb{T}^3$. Let us recall that the Gross-Pitaevskii hierarchy is an infinite system of linear PDEs that arises naturally in the derivation of the \emph{nonlinear Schr\"{o}dinger equation}. More precisely, the Gross-Pitaevskii hierarchy on the spatial domain $\Lambda$ is given by:
\begin{equation}
\label{GrossPitaevskiiHierarchy}
\begin{cases}
i \partial_t \gamma^{(k)} + (\Delta_{\vec{x}_k}-\Delta_{\vec{x}_k'}) \gamma^{(k)}=\sum_{j=1}^{k} B_{j,k+1} (\gamma^{(k+1)})\\
\gamma^{(k)}\big|_{t=0}=\gamma_0^{(k)}.
\end{cases}
\end{equation}
Here, $(\gamma_0^{(k)})_k$ is a fixed sequence of \emph{density matrices}, i.e. of functions $\gamma_0^{(k)}: \Lambda^k \times \Lambda^k \rightarrow \mathbb{C}$, and $(\gamma^{(k)})_k=(\gamma^{(k)}(t))_k$ is a sequence of time-dependent density matrices. The set $\Lambda$ is a spatial domain, which we take to be either $\mathbb{R}^d$ or $\mathbb{T}^d$. We say that the density matrix $\gamma_0^{(k)}$ \emph{has order $k$}. The operators $\Delta_{\vec{x}_k}$ and $\Delta_{\vec{x}_k'}$ are defined to be the Laplacian in the first and second set of $k$ spatial variables respectively. In other words:
$$\Delta_{\vec{x}_k}:=\sum_{j=1}^{k} \Delta_{x_j}\,,\,\Delta_{\vec{x}_k'}:=\sum_{j=1}^{k} \Delta_{x'_j}.$$
The map $B_{j,k+1}$ denotes the \emph{collision operator}, which is precisely defined in Section \ref{Notation}. Let us note that, in this paper, we will not assume any additional symmetry properties of the solutions to \eqref{GrossPitaevskiiHierarchy}. 

The Gross-Pitaevskii hierarchy occurs in the study of Bose-Einstein condensation, which is a state of bosonic particles at temperatures which are close to absolute zero. The particles at this low temperature have a tendency to occupy a one-particle state, which corresponds to the solution of a nonlinear Schr\"{o}dinger equation. In this context, the nonlinear Schr\"{o}dinger equation is called the \emph{Gross-Pitaevskii equation} after the work of Gross \cite{Gross} and Pitaevskii \cite{Pitaevskii}. The aforementioned physical phenomenon was predicted by Bose \cite{Bose} and Einstein \cite{Einstein} in 1924-1925. Their theoretical prediction was experimentally verified by the teams of Cornell and Wieman \cite{CW} and Ketterle \cite{Ket}, who were jointly awarded the 2001 Nobel Prize in Physics for this discovery.

A strategy in deriving nonlinear Schr\"{o}dinger type equations, in which a major step is devoted to the study of solutions to the hierarchy \eqref{GrossPitaevskiiHierarchy} was first developed by Spohn \cite{Spohn}. In this work, the author gave a rigorous derivation of the nonlinear Hartree equation $iu_t+\Delta u = (V*|u|^2)u$, for bounded convolution potentials $V=V(x)$ on $\mathbb{R}^d$. In the approach given by Spohn, the hierarchy \eqref{GrossPitaevskiiHierarchy} comes from a limit of the related BBGKY hierarchy, which, in turn is obtained from the $N$-body Schr\"{o}dinger equation of a properly scaled $N$-body Hamiltonian. Making this convergence rigorous takes quite a bit of effort. On $\mathbb{R}^3$, the work of Spohn was subsequently extended to the case of Coulomb potentials $V(x)=\pm \frac{1}{|x|}$ by Bardos, Golse and Mauser \cite{BGM} and Erd\H{o}s and Yau \cite{EY}. 

In a sequence of monumental works, Erd\H{o}s, Schlein and Yau \cite{ESY2,ESY3,ESY4,ESY5} gave a rigorous derivation of the defocusing cubic nonlinear Schr\"{o}dinger equation on $\mathbb{R}^3$. In the aforementioned works, a significant step was to check uniqueness of solutions to \eqref{GrossPitaevskiiHierarchy}. The authors prove this claim by the use of Feynman graph counting techniques. In a subsequent work, Klainerman and Machedon \cite{KM} gave an alternative proof of this claim, under slightly stronger assumptions. In particular, the authors gave a combinatorial reformulation of the Feynman graph expansion in the form of a \emph{boardgame argument}. This technique allowed them to give a proof of uniqueness without the use of Feynman diagrams, under the assumption of an a priori bound on the solution. In \cite{KSS}, Kirkpatrick, Schlein, and the second author applied these ideas to the two-dimensional periodic setting. In particular, they obtained a rigorous derivation of the defocusing cubic nonlinear Schr\"{o}dinger equation on $\mathbb{T}^2$ from the dynamics of many-body quantum systems. In our previous joint work with Gressman, \cite{GSS}, we proved a conditional uniqueness result for the Gross-Pitaevskii hierarchy on $\mathbb{T}^3$, which is the $3D$ analogue of the uniqueness result used in \cite{KSS} for regularity strictly greater than $1$.  In a recent paper, T. Chen, Hainzl, Pavlovi\'{c}, and Seiringer \cite{CHPS} used the Quantum de Finetti theorem and gave an alternative proof of the uniqueness result on $\mathbb{R}^3$ from \cite{ESY2}.

Let us note that the periodic problem was first considered in the work of Erd\H{o}s, Schlein, and Yau \cite{ESY1} and Elgart, Erd\H{o}s, Schlein, and Yau \cite{EESY}. In particular, the authors study the Gross-Pitaevskii hierarchy on $\mathbb{T}^3$ and obtain all the steps of Spohn's strategy except for uniqueness. In the subsequent work \cite{VS2}, the first author proved the uniqueness step needed in the analysis of \cite{EESY,ESY1} and thus obtained a rigorous derivation of the defocusing cubic nonlinear Schr\"{o}dinger equation on $\mathbb{T}^3$ from the dynamics of many-body quantum systems. A more detailed discussion about related works is given in Section \ref{Previously known results}.

In the above works, a key step in the proof of uniqueness was the proof of a spacetime estimate of the type:
\begin{equation}
\label{SpacetimeBoundalpha}
\big\|S^{(k,\alpha)} B_{j,k+1}\,\mathcal{U}^{(k+1)}(t)\, \gamma^{(k+1)}_0\big\|_{L^2([0,T] \times \Lambda^k \times \Lambda^k)} \lesssim \big\|S^{(k+1,\alpha)}\gamma^{(k+1)}_0\big\|_{L^2(\Lambda^{k+1} \times \Lambda^{k+1})}.
\end{equation}
for a fixed regularity exponent $\alpha$ and for a fixed time $T \in (0,+\infty]$. Here, $\mathcal{U}^{(k)}(t)$ denotes the analogue of the free Schr\"{o}dinger evolution for the operator $i \partial_t + \big(\Delta_{\vec{x}_k}-\Delta_{\vec{x}'_k}\big)$ acting on density matrices of order $k$ and $S^{(k,\alpha)}$ denotes the operator of taking $\alpha$ fractional derivatives of density matrices of order $k$. A precise definition of both operators is given in Section \ref{Notation}. We remark that in \eqref{SpacetimeBoundalpha}, the implied constant depends on $T$.

The estimate \eqref{SpacetimeBoundalpha} is used in the step in which one wants to contract the Duhamel terms that occur in the study of \eqref{GrossPitaevskiiHierarchy}. The use of this spacetime bound is what allows one to reduce what is originally a very long Duhamel expansion. The range of exponents of $\alpha$ for which one can prove \eqref{SpacetimeBoundalpha} typically determines in which regularity class one can prove uniqueness of solutions to \eqref{GrossPitaevskiiHierarchy}, at least by using the general method given in \cite{KM}.

In \cite{GSS}, it was proved that \eqref{SpacetimeBoundalpha} holds on $\mathbb{T}^3$ for $\alpha>1$. As a result, we could prove a conditional uniqueness result for \eqref{GrossPitaevskiiHierarchy} on $\mathbb{T}^3$ in a class of density matrices possessing $\alpha>1$ fractional derivatives and satisfying an a priori bound. We refer the reader to \cite{GSS} for a precise definition. It was also shown that the factorized solutions to \eqref{GrossPitaevskiiHierarchy}, which come from the NLS, belong to this class. The exact definition of factorized solutions is given in Subsection \ref{Factorized solutions}.

By using a specific counterexample, we showed that \eqref{SpacetimeBoundalpha} does not hold on $\mathbb{T}^3$ when $\alpha=1$. We noted that, in the special case of factorized density matrices, the estimate does hold when $\alpha=1$. The fact that on $\mathbb{T}^3$ \eqref{SpacetimeBoundalpha} does not hold in the endpoint case $\alpha=1$ is in sharp contrast to the $\mathbb{R}^3$ case \cite{KM}. Our goal in this paper is to explore how we may extend the range of $\alpha$ on $\mathbb{T}^3$ in a \emph{probabilistic sense}.

We note that the estimate \eqref{SpacetimeBoundalpha} was also studied in its own right in the recent work of Beckner \cite{Beckner}. In these works, the author gives several higher-dimensional generalizations in the non-periodic setting. The motivation for studying the spacetime estimate in this context is to develop a method for understanding restriction to a non-linear sub-variety \cite{Beckner2}. 

In this paper, we fix the spatial domain to be $\Lambda=\mathbb{T}^3$. Our first goal is to prove an estimate of the type \eqref{SpacetimeBoundalpha} for a larger range of $\alpha$, in an \emph{averaged sense}, by using a \emph{randomization procedure}, which is precisely defined in Definition \ref{randomization}. In particular, we \emph{randomize the Fourier coefficients} by multiplying them by a sequence of independent identically distributed standard Bernoulli random variables (meaning that their expected value is equal to $0$ and their standard deviation is equal to $1$). In the nonlinear dispersive equation literature, this idea was first applied in the work of 
Bourgain \cite{B,B2,B3,B4} on almost-sure well-posedness theory for the nonlinear Schr\"{o}dinger equation in low regularities. These works build on a wide range of techniques on randomization in nonlinear dispersive equations, which were first developed in the work of Lebowitz, Rose and Speer \cite{LRS}, and Zhidkov \cite{Zhidkov}. A related approach for the local problem was recently developed by Burq and Tzvetkov \cite{BT1}. We note that the general idea in all of these works is to add randomness into the problem in order to extend the range of regularity exponents for which one can study the PDE. These and other relevant sources are explained in more detail in Subsection \ref{Previously known results}.

By using an appropriate randomization procedure, we  prove the following result:
\\
\\
\emph{\textbf{Theorem 1:}
Let $\alpha>\frac{3}{4}$ be given. There exists a constant $C_0$ depending only on $\alpha$ such that
for all $k \in \mathbb{N}$ and $1 \leq j \leq k$, the following bound holds:}
\begin{equation}
\label{Theorem1bound2}
\|S^{(k,\alpha)} \, [B_{j,k+1}]^{\omega} \,\gamma_{0}^{(k+1)}\|_{L^2(\Omega \times \mathbb{T}^{3k} \times \mathbb{T}^{3k})} \leq C_0 \|S^{(k+1,\alpha)} \gamma_0^{(k+1)}\|_{L^2(\mathbb{T}^{3(k+1)} \times \mathbb{T}^{3(k+1)})}.
\end{equation}

The operator $[B_{j,k+1}]^{\omega}$ is a \emph{randomized collision operator}, for a fixed $\omega$ belonging to the probability space $\Omega$. It is obtained from the collision operator $B_{j,k+1}$ by appropriately randomizing the Fourier coefficients by means of standard Bernoulli random variables. 
A precise definition is given in \eqref{Bjkomega}, \eqref{Bjkomega2}, \eqref{Bjkrandomized} below. 

Using \eqref{Theorem1bound2}, and the unitarity of $\mathcal{U}^{(k)}(t)$, it is possible to prove that, for all $T>0$:
\begin{equation}
\label{Theorem1bound}
\big\|S^{(k,\alpha)} \, [B_{j,k+1}]^{\omega} \, \mathcal{U}^{(k+1)}(t) \, \gamma_{0}^{(k+1)}\big\|_{L^2(\Omega \times [0,T] \times \mathbb{T}^{3k} \times \mathbb{T}^{3k})} \leq
\end{equation}
$$C_0 \sqrt{T} \big\|S^{(k+1,\alpha)} \gamma_0^{(k+1)}\big\|_{L^2(\mathbb{T}^{3(k+1)} \times \mathbb{T}^{3(k+1)})}.
$$
whenever $\alpha>\frac{3}{4}$. The constant $C_0$ is the same as in \eqref{Theorem1bound2}. 
In particular, we see that by \eqref{Theorem1bound}, the range of regularity exponents is extended to $\alpha>\frac{3}{4}$, if one is willing to take the $L^2$ norm in the probability space $\Omega$. This is in contrast to the deterministic setting in which it is known that one has to restrict to $\alpha>1$ \cite{GSS}. Even though the bound \eqref{Theorem1bound} is a direct extension of the known bound in the deterministic setting, in our further analysis, we have to use the stronger bound given by \eqref{Theorem1bound2}. The bound in \eqref{Theorem1bound2} is the content of Theorem \ref{Theorem 1} below. The bound in \eqref{Theorem1bound} is the content of Corollary \ref{Corollary 1}.

By using Markov's inequality and Corollary \eqref{Theorem1bound}, we can deduce the following \emph{large deviation bound}:

\emph{For $\alpha>\frac{3}{4}$, $T>0$, $k \in \mathbb{N}$, $1 \leq j \leq k$, and $\lambda>0$:
\begin{equation}
\label{largedeviation}
\mathbb{P}\Big(\big\|S^{(k,\alpha)} \, [B_{j,k+1}]^{\omega} \, \mathcal{U}^{(k+1)}(t) \, \gamma_{0}^{(k+1)}\big\|_{L^2([0,T] \times \Lambda^k \times \Lambda^k)} \geq \lambda \Big) 
\end{equation}
$$\leq \frac{C_0^2 \, T \, \big\|S^{(k+1,\alpha)} \gamma_0^{(k+1)}\big\|_{L^2(\Lambda^{k+1} \times \Lambda^{k+1})}^2}{\lambda^2}$$
}
Here, $C_0$ is again the constant from \eqref{Theorem1bound2}.
The large deviation bound \eqref{largedeviation} is the content of Corollary \ref{Corollary 2}. This type of large deviation bound was shown to be useful in the study of nonlinear dispersive equations. In the context of the GP hierarchy, one has to apply similar estimates many times when estimating the Duhamel terms. In this context, it is easier to apply an averaged estimate as in \eqref{Theorem1bound2} and \eqref{Theorem1bound} than such a large deviation bound.

In the context of the randomized collision operator $[B_{j,k+1}]^{\omega}$, one is led to the study of the \emph{randomized Gross-Pitaevskii hierarchy}, which is given by:
\begin{equation}
\label{RandomizedGPI}
\begin{cases}
i\partial_t \gamma^{(k)}+(\Delta_{\vec{x}_k} -\Delta_{\vec{x}'_k})\gamma^{(k)}=\sum_{j=1}^{k}[B_{j,k+1}]^{\omega}(\gamma^{(k+1)}) \\
\gamma^{(k)}\big|_{t=0}=\gamma_0^{(k)}.
\end{cases}
\end{equation}
As is noted in Subsection \ref{Link with NLS}, the randomized Gross-Pitaevskii hierarchy \emph{admits factorized solutions}. For a fixed $\omega \in \Omega$, the sequence of density matrices given by $(\gamma^{(k)})_k:=(|\phi^{\omega} \rangle  \langle \phi^{\omega}|^{\otimes k})_k$ solves \eqref{RandomizedGPI} whenever $\phi^{\omega}$ is the randomization of $\phi$ that solves the nonlinear Schr\"{o}dinger equation: 
$$i \partial_t \phi + \Delta \phi =|\phi|^2 \phi.$$  
We note that the existence of factorized solutions is an important property of the (deterministic) Gross-Pitaevskii hierarchy \eqref{GrossPitaevskiiHierarchy}.


An interesting open problem would be to obtain a (non-deterministic) criterion for uniqueness of solutions to \eqref{RandomizedGPI} in a class which contains the natural energy space of time-dependent density matrices.
This is the class of $(\gamma^{(k)}(t))$, for which there exists a constant $C>0$ such that for all $k \in \mathbb{N}$, the following energy bound holds:
$$\|S^{(k,1)}\gamma^{(k)}(t)\|_{L^2(\Lambda^k \times \Lambda^k)} \leq C^k$$
uniformly in time. Here, $S^{(k,1)}$ is the differentiation operator which is precisely defined in \eqref{FractionalDerivative}. As a first step in this direction, one would be interested in studying the Duhamel expansions corresponding to the homogeneous problem associated to \eqref{RandomizedGPI}. 

More precisely, in order to prove any type of uniqueness result for \eqref{RandomizedGPI}, one would have to argue as in the deterministic setting and study the problem:
\begin{equation}
\notag
\begin{cases}
i\partial_t \gamma^{(k)}+(\Delta_{\vec{x}_k} -\Delta_{\vec{x}'_k})\gamma^{(k)}=\sum_{j=1}^{k}[B_{j,k+1}]^{\omega}(\gamma^{(k+1)}) \\
\gamma^{(k)}|_{t=0}=0.
\end{cases}
\end{equation}
and one would have to find a criterion for $(\gamma^{(k)})_k$
such that the Duhamel terms:
\begin{equation}
\label{DuhamelRandomI1}
\sigma^{(k)}_{n;\,\omega}(t_k):=
\end{equation}
$$(-i)^n \int_{0}^{t_k} \int_{0}^{t_{k+1}} \cdots \int_{0}^{t_{n+k-1}} \mathcal{U}^{(k)}(t_k-t_{k+1}) \, [B^{(k+1)}]^{\omega} \,\mathcal{U}^{(k+1)}(t_{k+1}-t_{k+2})$$
$$ [B^{(k+2)}]^{\omega} \cdots \,\mathcal{U}^{(n+k-1)}(t_{n+k-1}-t_{n+k}) \, [B^{(n+k)}]^{\omega} \, \gamma^{(n+k)}(t_{n+k}) \,dt_{n+k} \cdots dt_{k+2} \, dt_{k+1}.$$ 
converge to zero in an appropriate norm as $n \rightarrow \infty$. Here, we assume that $k \in \mathbb{N}$ and $t_k \in \mathbb{R}$ are fixed and we use the shorthand notation:
$[B^{(\ell+1)}]^{\omega}:=\sum_{j=1}^{\ell}[B_{j,\ell+1}]^{\omega}.$
In the definition of $\sigma^{(k)}_{n;\,\omega}$, $k$ denotes the order of the density matrix, $n$ denotes the length of the Duhamel expansion and $\omega$ is a fixed parameter in the probability space $\Omega$.
There are several difficulties in applying the technique used in the deterministic setting. Arguing by analogy with the deterministic setting, one would want to apply the spacetime estimate \eqref{Theorem1bound} in order to contract the Duhamel term. The main issue is that, in \eqref{Theorem1bound}, the density matrix $\gamma_0^{(k+1)}$ is not allowed to depend on $\omega$. In the expansion \eqref{DuhamelRandomI1}, there is dependence on $\omega$ in each collision operator $[B^{(\ell+1)}]^{\omega}$ for $\ell=k,\ldots,n-1$. Moreover, the density matrix $\gamma^{(n+k)}$ in principle depends on $\omega$ in a complicated way. In any case, it is not possible to apply the spacetime bound in order to contract the terms as was the case in the deterministic setting. This point is explained in Subsection \ref{Difficulties arising from higher-order Duhamel expansions}.

One way of dealing with the difficulty of the $\omega$-dependence of all of the collision operators $[B^{(\ell+1)}]^{\omega}$ is to modify the problem and to look at the \emph{independently randomized Gross-Pitaevskii hierarchy}, i.e., given a sequence $(\omega_k)_{k \geq 2}$ of elements in the probability space $\Omega$, we look at the hierarchy:
\begin{equation}
\label{RandomizedGPI2}
\begin{cases}
i\partial_t \gamma^{(k)}+(\Delta_{\vec{x}_k} -\Delta_{\vec{x}'_k})\gamma^{(k)}=\sum_{j=1}^{k}[B_{j,k+1}]^{\omega_{k+1}}(\gamma^{(k+1)}) \\
\gamma^{(k)}|_{t=0}=\gamma_0^{(k)}.
\end{cases}
\end{equation}
In other words, the randomizations in the collision operators at different levels are now independent of each other. We symbolize this by denoting the randomization parameters $\omega_{k+1}$ differently. The hierarchy \eqref{RandomizedGPI2} is a generalization of \eqref{RandomizedGPI} in the sense that it reduces to the latter hierarchy when all of the $\omega_{k+1}$ are mutually equal. However, it is no longer true in general that \eqref{RandomizedGPI2} admits factorized solutions, see Subsection \ref{Properties of randomizedGP2}.

When studying the homogeneous problem associated to \eqref{RandomizedGPI2}, one is led to the study of the following Duhamel terms:

\begin{equation}
\label{DuhamelRandomI2}
\sigma^{(k)}_{n;\,\omega_{k+1},\omega_{k+2},\ldots,\omega_{n+k}}(t_k):=
\end{equation}
$$(-i)^n \int_{0}^{t_k} \int_{0}^{t_{k+1}} \cdots \int_{0}^{t_{n+k-1}} \mathcal{U}^{(k)}(t_k-t_{k+1}) \, [B^{(k+1)}]^{\omega_{k+1}} \,\mathcal{U}^{(k+1)}(t_{k+1}-t_{k+2})$$
$$ [B^{(k+2)}]^{\omega_{k+2}} \cdots \,\mathcal{U}^{(n+k-1)}(t_{n+k-1}-t_{n+k}) \, [B^{(n+k)}]^{\omega_{n+k}} \, \gamma^{(n+k)}(t_{n+k}) \,dt_{n+k} \cdots dt_{k+2} \, dt_{k+1}.$$ 
We note that the superscript $k$ denotes the order of the density matrix, the subscript $n$ denotes the length of the Duhamel expansion, and $\omega_{k+1},\omega_{k+2},\ldots,\omega_{n+k}$ are independently chosen elements of the probability space $\Omega$.
In this form, it is still the case that $\gamma^{(n+k)}$ has a complicated dependence on $(\omega_{k+1},\omega_{k+2},\ldots,\omega_{n+k})$. 

We deal with the last difficulty by fixing a \textbf{\emph{deterministic}} sequence of time-dependent density matrices $(\gamma^{(k)}(t))_k$ satisfying the a priori bound
\begin{equation}
\label{aprioriboundI}
\|S^{(k,\alpha)}\gamma^{(k)}(t)\|_{L^2(\mathbb{T}^{3k} \times \mathbb{T}^{3k})} \leq C_1^k
\end{equation}
for some constant $C_1>0$, which is independent of $k$ and $t$. Here, we do not assume that $(\gamma^{(k)})_k$ solves \eqref{RandomizedGPI2}. The a priori condition is natural in the sense that it is satisfied globally in time for factorized solutions to \eqref{GrossPitaevskiiHierarchy} and \eqref{RandomizedGPI} when $\alpha=1$, and it is satisfied locally in time for factorized solutions when $\alpha>1$. The latter observation can be deduced from \cite{1B}. Moreover, sequences of density matrices satisfying such a priori bounds were shown to arise naturally in the study of the Cauchy problem for the GP hierarchy in the recent work of T. Chen and Pavlovi\'{c} \cite{CP1,CP2,CP3,CP4,CP}.

We now define $\sigma^{(k)}_{n;\,\omega_{k+1},\omega_{k+2},\ldots,\omega_{n+k}}$ analogously as in \eqref{DuhamelRandomI2}, keeping in mind that the sequence $(\gamma^{(k)}(t))_k$ is a fixed sequence of time-dependent density matrices which do not necessarily solve \eqref{RandomizedGPI2}. We now fix $n \in \mathbb{N}$ and we look at $\tilde{\gamma}^{(k)}$ for $k=1,\ldots,n$ given by:
\begin{eqnarray*}
\tilde{\gamma}^{(1)}&:=&\sigma^{(1)}_{n;\,\omega_2,\omega_3, \omega_4, \omega_5, \ldots, \,\omega_{n+1}}\\
\tilde{\gamma}^{(2)}&:=&\sigma^{(2)}_{n-1;\,\omega_3, \omega_4, \omega_5, \ldots,\, \omega_{n+1}}\\
\tilde{\gamma}^{(3)}&:=&\sigma^{(3)}_{n-2;\,\omega_4, \omega_5, \ldots, \,\omega_{n+1}}\\
&&\vdots\\
\tilde{\gamma}^{(n)}&:=&\sigma^{(n)}_{1;\,\omega_{n+1}}.
\end{eqnarray*}
The $\tilde{\gamma}^{(k)}$ then solve, by construction:

\begin{equation}
\notag
\begin{cases}
i \partial_t \tilde{\gamma}^{(k)} + (\Delta_{\vec{x}_k}-\Delta_{\vec{x}'_k})\tilde{\gamma}^{(k)}=\sum_{j=1}^{k} [B_{j,k+1}]^{\omega_{k+1}} (\tilde{\gamma}^{(k+1)})\\
\tilde{\gamma}^{(k)}\big|_{t=0}=0.
\end{cases}
\end{equation}
for all $k \in \{1,2,\ldots,n-1\}$. In other words, we obtain an \emph{arbitrarily long subset of solutions} to the full hierarchy \eqref{RandomizedGPI2} with zero initial data.

It is now possible to apply the estimate \eqref{Theorem1bound2} in order to study the Duhamel terms $\sigma^{(k)}_{n;\,\omega_{k+1},\omega_{k+2},\ldots,\omega_{n+k}}$. We can prove the following:
\\
\\
\emph{\textbf{Theorem 2:}
Suppose that $\alpha>\frac{3}{4}$ and $k \in \mathbb{N}$. There exists $T>0$ depending only on the constant $C_1$ in \eqref{aprioriboundI} and on $\alpha$ such that:
\begin{equation}
\label{Theorem2bound}
\,\, \sup_{t \in [0,T]} \, \big\|S^{(k,\alpha)} \sigma^{(k)}_{n;\,\omega_{k+1},\omega_{k+2},\ldots,\omega_{n+k}}(t)\big\|_{L^2 \big(\Omega_{k+1} \times \Omega_{k+2} \times \cdots \times \Omega_{n+k}; \,L^2(\mathbb{T}^{3k} \times \mathbb{T}^{3k})\big)} \rightarrow 0\,\,
\end{equation} 
as $n \rightarrow \infty.$
\\
\\
Moreover,
\begin{equation}
\label{Theorem2bound2}
\,\,\sup_{t \in [0,T]}\, \big\|S^{(k,\alpha)}\sigma^{(k)}_{n;\, \omega_{k+1},\omega_{k+2}, \ldots, \omega_{n+k}}(t)\big\|_{L^2\big(\prod_{m \geq 2} \Omega_m; L^2(\mathbb{T}^{3k} \times \mathbb{T}^{3k})\big)} \rightarrow 0
\end{equation}
as $n \rightarrow \infty.$
}
\\
\\
	This result is proved as Theorem \ref{Smallness Bound} in Subsection \ref{Application of randomized estimate}.  As is noted in Remark \ref{Remark2}, it is possible to prove Theorem 2 when $\Lambda=\mathbb{T}^d$, for $d \geq 1$ provided that $\alpha>\frac{d}{4}$. Throughout our paper, we mostly study the case $\Lambda=\mathbb{T}^3$ since in \cite{GSS} we were able to describe the full range of regularity exponents which are admissible in the estimate \eqref{Theorem1bound}. We note that the norm in \eqref{Theorem2bound2} is well-defined due to a result first proved by work of Kakutani \cite{Kakutani}, which builds on the previous work by Kolmogorov \cite{Kolmogorov}. The significance of taking this norm, compared to the one in \eqref{Theorem2bound}, is that, in this way, we measure convergence in a space which is independent of $n$. 

Let us emphasize that the time interval $T$ in the above theorem is independent of $n$. As we will see in the proof, it is the case that $T \sim \frac{1}{C_1}$. In other words, we obtain uniform convergence to zero of the Duhamel terms as $n \rightarrow \infty$ in a fixed norm on a fixed time interval.

As was remarked earlier, the spacetime estimate \eqref{Theorem1bound} is not directly applicable in the study of the randomized Gross-Pitaevskii hierarchy \eqref{RandomizedGPI}. In this context, one would like to prove an estimate of the form:
\begin{equation}
\label{DuhamelnI}
\big\|S^{(n,\alpha)}\,\mathcal{U}^{(n)}(t_1-t_2)\,[B^{\pm}_{j_1,k_1}]^{\omega}\,\mathcal{U}^{(n+1)}(t_2-t_3)\,[B^{\pm}_{j_2,k_2}]^{\omega} \cdots
\end{equation}
$$\cdots \, \mathcal{U}^{(n+\ell-1)}(t_{\ell}-t_{\ell+1}) \,[B^{\pm}_{j_{\ell},k_{\ell}}]^{\omega} \,\gamma_0^{(n+\ell)}\big\|_{L^2(\Omega \times \mathbb{T}^{3n} \times \mathbb{T}^{3n})} \leq C \big\|S^{(n+\ell,\alpha)} \gamma^{(n+\ell)}_0\big\|_{L^2(\mathbb{T}^{3(n+\ell)} \times \mathbb{T}^{3(n+\ell)})}$$ 
for all sequences of density matrices $(\gamma^{(k)}_0)_k$, and for $n,\ell \in \mathbb{N}$, $j_1,\ldots j_{\ell}, k_1, \ldots, k_{\ell} \in \mathbb{N}$, with $1 \leq j_1<k_1 \leq n+\ell, \ldots, 1 \leq j_{\ell}<k_{\ell} \leq n+\ell$ and $t_1,t_2, \ldots, t_{\ell+1} \in \mathbb{R}$.

The fact that \eqref{DuhamelnI} holds when $n=1$ and $\alpha>\frac{3}{4}$ follows from \eqref{Theorem1bound}. However, it is shown in Subsection \ref{Difficulties arising from higher-order Duhamel expansions} that, for general density matrices, we cannot use the randomization to obtain this type of bound. In this subsection, we note that the pairing of the frequencies as a result of the randomization does not allow us to close the estimate in the case $n=2$ as it did in the case when $n=1$. The argument for $n \geq 3$ similarly does not apply.

	The good news is that the estimate \eqref{DuhamelnI} holds if we restrict the density matrices to lie in an appropriate \emph{non-resonant class}. This is similar to the idea of \emph{Wick-ordering} \cite{B3,COh} and related ideas applied in \cite{NS} in the context of the nonlinear Schr\"{o}dinger equation. In fact, we will work in the non-resonant class $\mathcal{N}$, which is precisely defined in Subsection \ref{Estimate in N}. We note that this class does not contain the factorized solutions to \eqref{RandomizedGPI}. As we will see, it is an important matter to examine the behavior of the constant $C$ obtained in \eqref{DuhamelnI} in terms of $n$ and $\ell$. The exact estimate is given in Theorem \ref{Smallness Bound 2} of Subsection \ref{Estimate in N}.

	We again consider a time-dependent deterministic sequence of density matrices $(\gamma^{(k)}(t))$. Unlike as in the previous case, we assume that, for each fixed $t$, the sequence $(\gamma^{(k)}(t))_k$ belongs to the non-resonant class $\mathcal{N}$. As a part of the definition, we assume that the sequence of density matrices satisfies an a priori bound as in \eqref{aprioriboundI}. Having chosen such a sequence $(\gamma^{(k)})$, we define $\sigma^{(k)}_{n;\,\omega}(t)$ as in \eqref{DuhamelRandomI2}. In this way, we again obtain an arbitrarily long subset of solutions to the hierarchy \eqref{RandomizedGPI}.
	
	We are now in the position to apply the randomized spacetime estimate to the study of $\sigma^{(k)}_{n;\,\omega}$, i.e. of Duhamel expansions of order $n$ of elements of the non-resonant class $\mathcal{N}$. In particular, we can prove:
\\
\\
\emph{\textbf{Theorem 3:}	
Suppose that $\alpha \geq 0$ and $k \in \mathbb{N}$. There exists $T>0$, depending only on the constant $C_1$ in the definition of the class $\mathcal{N}$, $\alpha$, and on $k$ such that: 
\begin{equation}
\label{Theorem3bound}
\sup_{t \in [0,T]} \, \big\|S^{(k,\alpha)} \sigma^{(k)}_{n;\,\omega}(t)\big\|_{L^2 \big(\Omega \times \mathbb{T}^{3k} \times \mathbb{T}^{3k} \big)} \rightarrow 0
\end{equation}
as $n \rightarrow \infty.$
}

\begin{remark}
Let us observe that now, the range of regularity exponents has been extended to $\alpha \geq 0$. This is a significant step in the direction of the understanding of the low-regularity problem for Gross-Pitaevskii type hierarchies. Theorem 3 is given below as Theorem \ref{Smallness Bound 2}.  
\end{remark}

\begin{remark}
We note that, due to the presence of only one random parameter $\omega \in \Omega$, the norm in \eqref{Theorem3bound} of Theorem 3 is simpler than the one in Theorem 2.
\end{remark}

\begin{remark}
The analysis in Theorem 3 extends without any changes to the general case of $\Lambda=\mathbb{T}^d$, for $d \geq 1$.
\end{remark}

\begin{remark}
We note that the results of Theorem 2 and Theorem 3 estimate the Duhamel expansions similar to those obtained in the uniqueness analysis in the deterministic setting, as in \cite{CP1,CP2,GSS,KM,KSS}. The results of Theorem 2 and Theorem 3 show that these Duhamel expansions converge to zero in a class of density-matrices in a low-regularity space with a random component. 
\end{remark}

In the subsequent work of the first author \cite{VS}, it was shown that the full randomized hierarchies \eqref{RandomizedGPI} and \eqref{RandomizedGPI2} have local-in-time solutions for almost every value of the random parameter. This is an existence result which was motivated by truncation techniques used by T. Chen and Pavlovi\'{c} \cite{CP4} in the deterministic context. In \cite{VS}, such solutions of \eqref{RandomizedGPI} are obtained for regularities
is $\alpha>\frac{3}{4}$, and solutions of \eqref{RandomizedGPI2} are obtained for regularities $\alpha \geq 0$. As is the case in the assumptions of Theorem 3, the initial data for \eqref{RandomizedGPI2} in \cite{VS} needs to belong to an appropriate non-resonant class.
	
\subsection{Previously known results}
\label{Previously known results}

In addition to the references mentioned above, there is a vast literature on the connection between the derivation of NLS-type equations and hierarchies of the type \eqref{GrossPitaevskiiHierarchy} and related problems. 
A derivation of an NLS-type equation based on Fock space techniques was first studied in the work of Hepp \cite{Hepp} and Ginibre and Velo \cite{GV1,GV2}. The Fock space technique was later applied in \cite{FKP,FKS}. The derivation of a hierarchy similar to \eqref{GrossPitaevskiiHierarchy} coming from the limit of $N$-body Schr\"{o}dinger dynamics was obtained in \cite{ABGT,AGT,BGM,BePoSc1,BePoSc2,BdOS,CP,CT,XC3,XC4,ChenHolmer1,ChenHolmer2,ChenHolmer3,ES,FGS,FL,MichelangeliSchlein,Xie} in various different contexts. In these papers, the authors used the general strategy outlined by Spohn, instead of the Fock space technique. 

The first rate of convergence result was obtained by Rodnianski and Schlein \cite{RodnianskiSchlein} and subsequent rate of convergence results have been obtained in \cite{Anapolitanos,BdOS,ChenLeeSchlein,XC1,XC2,ErdosSchlein,FKP,GM,GMM1,GMM2,KP,Lee,Luhrmann,MichelangeliSchlein,Pickl1,Pickl2}. The Cauchy problem associated to a hierarchy as in \eqref{GrossPitaevskiiHierarchy} has been studied in its own right in \cite{CP1,CP2,CP3,CP4,CPT1,CPT2,CL}. A new unconditional uniqueness result for the cubic Gross-Pitaevskii hierarchy on $\mathbb{R}^3$, based on the Quantum de Finetti theorem has recently been obtained by T. Chen, Hainzl, Pavlovi\'{c}, and Seirenger in \cite{CHPS}. These techniques have been adapted in order to show scattering results in the context of the Gross-Pitaevskii hierarchy in \cite{ChHaPavSei2}. Low regularity extensions of this method have been obtained in \cite{HTX}. The periodic analogue of the methods from \cite{CHPS} was an important step in \cite{VS}. Recently, the Quantum de Finetti was used in the study of the Chern-Simons-Schr\"{o}dinger hierarchy in \cite{CS}.

Important results related to the Gross-Pitaevskii hierarchy have been studied on the level of the $N$-body Schr\"{o}dinger equation in the series of works \cite{LS,LSY,LSY2} with an expository account given in \cite{LSSY}. We refer the reader to the introduction of \cite{GSS} for a more precise discussion of the problem and of the references mentioned above. We would also like to recall the connection with certain optical lattice models that have been studied in \cite{ALSSY1,ALSSY2}. For a detailed introduction to the general problem, we also refer the interested reader to the lecture notes by Schlein \cite{Schlein}.

An approach based on randomization has been useful in the study of low-regularity solutions to nonlinear dispersive equations. In particular, randomization techniques have been applied in the context where the low regularity techniques of the \emph{high-low method} as in the work of Bourgain \cite{1B7} or the \emph{I-method}, as in the work of Colliander, Keel, Staffilani, Takaoka, and Tao \cite{CKSTT} are known not to work. As was mentioned in the introduction, this probabilistic approach was first used by Bourgain \cite{B,B2,B3,B4} and it has its origins in previous work of Lebowitz, Rose, and Speer \cite{LRS}, and Zhidkov \cite{Zhidkov}.
The main idea is that global existence can be studied by means of the existence of a Gibbs measure and its invariance under the flow. The properly constructed Gibbs measure is supported away from the set of initial data for which the above deterministic methods do not apply.

	The Gibbs measure approach is only known to be applicable in the context in which there exists a Hamiltonian  structure. In a more general context, the idea of a direct randomization of the Fourier coefficients without the use of an invariant Gibbs measure has been shown to be useful. One randomizes the function $f=\sum_n c_n e^{inx}$ by multiplying each Fourier coefficient $c_n$ with $h_n(\omega)$, where $h_n(\omega)$ are independent, identically distributed standard random variables whose expected value is zero. The precise definition is given in Definition \ref{randomization} below.
	
	In the context of nonlinear dispersive PDE, this idea was pioneered in the work of Burq and Tzvetkov \cite{BT1}. A key fact in this work is the fact that, due to the randomization, one can obtain improved $L^p$ regularity in the initial data almost surely. This is a phenomenon which was first noted in work of Rademacher \cite{Rademacher} and Paley and Zygmund \cite{PaleyZygmund1,PaleyZygmund2,PaleyZygmund3}. Related results were also proved in the work of Marcinkiewicz and Zygmund \cite{MarcinkiewiczZygmund} and Khintchine, see \cite{Wolff}. We also note the work \cite{AyacheTzvetkov}, in which the authors study the special case of Gaussian random series in which some of these observations can be deduced by alternative means. It is noted in \cite{BT1} that the randomization does not improve regularity on the Sobolev scale almost surely. Hence, all of the gain has to be obtained due to the almost sure gain in integrability. Let us remark that the idea of a gain in integrability due to randomization is related to the more general phenomenon of \emph{hypercontractivity} 
\cite{Federbush,Glimm,LGross1,LGross2,Nelson1}.

In addition to the mentioned works, there is a wide range of results on the application of randomization to nonlinear dispersive PDEs. We refer the interested reader to the works \cite{BenyiOhPocovnicu1,BenyiOhPocovnicu2,BourgainBulut,BourgainBulut2,BourgainBulut3,BTT,BurqTzvetkov,BT1,1BT2,BurqandTzvetkov,Cacciafesta_deSuzzoni,COh, Deng1, Deng2,DengTzvetkovVisciglia,LM,NORBS,NRBSS,Oh1,Oh2,Oh3,OhSulem,Richards,deSuzzoni2,deSuzzoni,deSuzzoni3,deSuzzoniTzvetkov,1Tho,TT,Tzv1,Tzv2,Tzv3,Xu}, as well as to the expository works \cite{1B24,Zh} and to the references therein. Furthermore, we note that the idea of randomization of the Fourier coefficients without the use of an invariant measure has also been applied in the context of the Navier-Stokes equations. The relevant works are \cite{DengCui1,DengCui2,NahmodPavlovicStaffilani,ZhangFang}. By applying this method, the authors were able to obtain existence results almost surely in supercritical regimes.

Probabilistic methods have previously been applied in the context of the $N$-body Schr\"{o}dinger equation, but in a slightly different context. In the recent work \cite{BenArousKSch}, it is proved that the fluctuations around the limiting dynamics given by the Hartree equation satisfy a central limit theorem. This approach builds on previous central limit theorems in a quantum setting \cite{CramerEisert,CushenHudson, GVV,Hayashi,HeppLieb,JaksicPautratPillet,Kuperberg}.

An alternative probabilistic approach was taken in \cite{AdCoKo}. Here, the authors consider a many-body system of mutually repellent bosons and they derive an explicit variational expression for the corresponding limiting free energy. In doing so, they use the cycle structure of random paths which appear in the Feynman-Kac formula. The main novelty of this paper is to recast the whole interacting system as an expectation with respect to a marked Poisson process. Methods involving point processes were previously applied in the study of Bose gases without interaction in \cite{Fichtner,Rafler}.

In a recent preprint \cite{ChatterjeeDiaconis}, the authors analyze the fluctuations of the Bose-Einstein condensate for a system of non-interacting bosons with a potential. The system is assumed to modeled according to the canonical ensemble. In this context, the authors give a rigorous proof of Bose-Einstein condensation with positive probability assuming that the temperature is sufficiently low. The probability measure on the space of configurations is the \emph{canonical Gibbs measure}.

We note that probabilistic techniques have been applied to the study of $N$-body Schr\"{o}dinger problems in the experimental literature as well. In particular, we note the work \cite{Zwierlein1}. Here, the experiment is based on applying a randomized Monte Carlo method of thermodynamic measurements of a unitary Fermi gas across the superfluid phase transition. In this way, it is possible to validate the theory of strongly interacting matter given by Bardeen, Cooper and Schrieffer \cite{BaCoSc1,BaCoSc2,Co}. It is also noted that similar probabilistic methods can be applied in the study of other physical systems, such as two-dimensional Bose and Fermi gases as well as fermions in optical lattices. The precise formulation of the randomization method, i.e. the \emph{Bold diagrammatic Monte Carlo (BDMC)}, was first developed in \cite{ProkofievSvistunov2,ProkofievSvistunov3,ProkofievSvistunov1}. This approach was also used in \cite{Zwierlein2} in the context of formal summation of Feynman graphs.

\subsection{Ideas and techniques used in the proofs}
Let us first recall the definition of the randomization of a function. Here $\Lambda=\mathbb{T}^d$ is the spatial domain.

\begin{definition}
Given $f \in L^2(\Lambda)$, with Fourier series $\sum_{n} c_n e^{inx}$ and a sequence of independent, identically distributed random variables with expected value zero $(h_n(\omega))$, we define the function $f^{\omega}$ by:
\label{randomization}
\begin{equation}
f^{\omega}:=\sum_n h_n(\omega) c_n e^{inx},
\end{equation} 
$f^{\omega}$ is called the randomization of $f$. 
\end{definition}

As was mentioned above, one of the big improvements obtained by randomization is the \emph{gain of integrability}, almost surely.
More precisely, given a sequence of independent identically distributed random Bernoulli or Gaussian random variables $(h_n(\omega))$, centered at zero, one obtains that for all $1<p<\infty$:
\begin{equation}
\label{GainOfIntegrability}
\big\|\sum_{n} h_n(\omega) c_n \big\|_{L^p(\Omega)} \leq C(p) \big(\sum_{n} |c_n|^2 \big)^{\frac{1}{2}}.
\end{equation}
We observe that this is a better bound than the bound $C(p) \big( \sum |c_n| \big)$, which immediately follows from the triangle inequality.
For a proof of \eqref{GainOfIntegrability}, we refer the reader to the proof of Lemma 4.2 in \cite{BT1}.

 We can then use Fubini's Theorem, \eqref{GainOfIntegrability} with $c_n:=\widehat{f}(n) \cdot e^{inx}$ and Plancherel's Theorem in order to deduce that:
$$\|f^{\omega}\|_{L^p(\Omega \times \Lambda)} \leq C(p) \|f\|_{L^2}.$$
In this way, the randomization gives us a gain in integrability, almost surely.

It is instructive to recall the main idea of the proof of \eqref{GainOfIntegrability}. One typically takes $p$ to be an even integer, since the other cases can be deduced by interpolation. Taking $p$-th powers of the left-hand side in \eqref{GainOfIntegrability}, we obtain a sum of terms of the type:
$$\int_{\Omega} h_{n_1}(\omega) \cdots h_{n_{k}}(\omega) \cdot \overline{h}_{m_{1}}(\omega) \cdots \overline{h}_{m_{\ell}}(\omega) \cdot c_{n_1} \cdots c_{n_{k}} \cdot \overline{c}_{m_1} \cdots \overline{c}_{m_{\ell}}\,dp(\omega)$$
for some $k,\ell \in \mathbb{N}$ with $k+\ell=p$.
Since the random variables are mutually independent and since they have expected value equal to zero, the above expression equals to zero if an $n_i$ or and $m_j$ occurs exactly once in the 
set $\{n_1,\ldots,n_k,m_1,\ldots,m_{\ell}\}$. Hence, each frequency must \emph{pair up} with at least one of the other frequencies or else the contribution equals zero. This observation reduces the original sum one is considering in \eqref{GainOfIntegrability}. The claim then follows by using H\"{o}lder's inequality. We note that this argument works in all dimensions.

In applications of randomization techniques to the study of nonlinear PDE, which were mentioned above, one usually randomizes in the initial data. In particular, if one considers an NLS-type equation:
\begin{equation}
\notag
\begin{cases}
i \partial_t u + \Delta u=N(u)\\
u\big|_{t=0}=u_0.
\end{cases}
\end{equation}
for some power-type nonlinearity $N(u)$, one typically takes the initial data to be \emph{random}. In other words, one considers $u_0=\phi^{\omega}$, in the sense discussed in Definition \ref{randomization} above. Here $\phi=\phi(x)$ is a fixed deterministic function. 
As was noted earlier, the function $\phi^{\omega}$ exhibits better integrability properties than the function $\phi$, on average. The key step is then to write:
$$u=e^{it\Delta} \phi^{\omega} + v$$
and to note that the function $v$ satisfies the following Cauchy problem, with homogeneous initial data:
\begin{equation}
\label{DifferenceEquation}
\begin{cases}
i \partial_t v + \Delta v = N(e^{it\Delta}\phi^{\omega}+v)\\
v\big|_{t=0}=0.
\end{cases}
\end{equation}
In solving for $v$ in \eqref{DifferenceEquation}, one uses a fixed point argument. One controls the terms coming from $e^{it\Delta}\phi^{\omega}$ in a higher $L^p$ space. More precisely, one uses the gain of integrability almost surely combined with a large deviation bound to note that the corresponding $L^p$ norm of the linear part is bounded for a large set of $\omega \in \Omega$. A quantitative estimate on the measure of the good set of $\omega$ is given by a large deviation bound, which is typically deduced by Markov's inequality. One can apply all of these facts and use H\"{o}lder's inequality to note that the nonlinear term satisfies the right estimate from which one can apply a fixed point argument, for $\omega$ belonging to a good subset of $\Omega$. An important step in this approach is to use the fact that, in the expression $N(e^{it\Delta}\phi^{\omega}+v)$, the only first-order terms in $v$ are multiplied with a factor of $e^{it\Delta}\phi^{\omega}$, which is bounded in a higher $L^p$ norm.

The above approach does not apply to the study of the Gross-Pitaevskii hierarchy since the problem is now linear. Namely, let us consider the problem with random initial data:
\begin{equation}
\notag
\begin{cases}
i \partial_t \gamma^{(k)} + (\Delta_{\vec{x}_k}-\Delta_{\vec{x}_k'}) \gamma^{(k)}=\sum_{j=1}^{k} B_{j,k+1} (\gamma^{(k+1)})\\
\gamma^{(k)}\big|_{t=0}=\gamma_{0,\omega}^{(k)}.
\end{cases}
\end{equation}
We now write:
$$\gamma^{(k)}=\mathcal{U}^{(k)}(t)\,\gamma^{(k)}_{0,\omega} + \tilde{\gamma}^{(k)}.$$ 
The $\tilde{\gamma}^{(k)}$ now solve the following problem with homogeneous initial data:
\begin{equation}
\label{DifferenceEquation2}
\begin{cases}
i \partial_t \tilde{\gamma}^{(k)} + (\Delta_{\vec{x}_k}-\Delta_{\vec{x}_k'}) \tilde{\gamma}^{(k)}=\sum_{j=1}^{k} B_{j,k+1} \big(\,\mathcal{U}^{(k+1)}(t) \gamma^{(k+1)}_{0,\omega}+\tilde{\gamma}^{(k+1)}\big)\\
\tilde{\gamma}^{(k)}\big|_{t=0}=0.
\end{cases}
\end{equation}
We note that the operator $B_{j,k+1}$ is linear so the right-hand side in \eqref{DifferenceEquation2} equals:
$$\sum_{j=1}^{k}B_{j,k+1}\big(\,\mathcal{U}^{(k+1)}(t)\gamma^{(k+1)}_{0,\omega}\big)+ \sum_{j=1}^{k}B_{j,k+1}\big(\tilde{\gamma}^{(k+1)}\big).$$
In other words, the free evolution of the random part given by $\mathcal{U}^{(k+1)}(t)\,\gamma^{(k+1)}_{0,\omega}$ and the remainder $\tilde{\gamma}^{(k+1)}$ on the right-hand side are \emph{completely decoupled}. As a result, there is no small factor multiplying the term which one would want to estimate, i.e. $\sum_{j=1}^{k}B_{j,k+1}\big(\tilde{\gamma}^{(k+1)}\big)$.

Due to the phenomenon discussed above, we \emph{randomize in the collision operator}, instead of in the initial data. This approach can be viewed as a first step in the direction of a \emph{nonlinear form of randomization}, since the collision operator on the level of the Gross-Pitaevskii hierarchy corresponds to the nonlinearity on the level of the nonlinear Schr\"{o}dinger equation. 
A remark on what is possible to say when one randomizes in the initial data in the context of the Gross-Pitaevskii hierarchy is given in Subsection \ref{Alternative form of randomization}.

An additional motivation for randomizing the collision operator is that doing so helps us obtain a gain in integrability and thus prove a randomized version of the spacetime estimate \eqref{SpacetimeBoundalpha} for a larger class of $\alpha$. More precisely (c.f. \eqref{FourierTransform1} and \eqref{FourierTransform2} below), on the Fourier transform side, the collision operator involves a sum in the frequencies as follows:

\begin{equation}
\notag
(B_{1,k+1} \gamma_0^{(k+1)})\,\,\widehat{}\,\,(\vec{\xi}_k; \vec{\xi'}_k)= 
\end{equation}
$$\sum_{{\xi}_{k+1}, \, {\xi'}_{k+1} \in \mathbb{Z}^3} \, \widehat{\gamma_0}^{(k+1)}(\xi_1-\xi_{k+1}+\xi'_{k+1}, \xi_2, \ldots, \xi_k, \xi_{k+1}; \xi'_1,\ldots, \xi'_k,\xi'_{k+1})$$
$$-\sum_{{\xi}_{k+1}, \, {\xi'}_{k+1} \in \mathbb{Z}^3} 
\,\widehat{\gamma_0}^{(k+1)}(\xi_1, \ldots, \xi_k, \xi_{k+1}; \xi'_1-\xi'_{k+1}+\xi_{k+1}, \xi'_2, \ldots, \xi'_k,\xi'_{k+1}).$$

Our goal is to multiply the summands in the above formula with appropriate random coefficients in such a way that we can apply the techniques used in the proof of \eqref{GainOfIntegrability} in the context of the spacetime estimate \eqref{SpacetimeBoundalpha}. The randomization given in \eqref{Bjkomega} and \eqref{Bjkomega2} below is the one which leads to the randomized spacetime bound given by \eqref{Theorem1bound2}. 


	We recall that in \cite{GSS}, the proof of the spacetime estimate relied on the dispersive effect of the free evolution $\mathcal{U}^{(k+1)}(t)$. In the randomized setting, we only use the fact that this operator is unitary on $L^2$-based spaces. It is the pairing of the frequencies which ultimately gives us the gain. This is a common phenomenon 
in the study of nonlinear dispersive equations by means of randomization. As a result of the proof, we will be able to prove an estimate on a quantity which does not involve an integral in time, as is seen in \eqref{Theorem1bound2}.
	
	It is also possible that the range of $\alpha$ in \eqref{Theorem1bound} could be extended by using dispersive properties of $\mathcal{U}^{(k+1)}(t)$ by using similar ideas as in \cite{B3}. We will not pursue this approach in our paper. We remark that such an improvement could only be applicable in estimates in which the norms involve integration in time. As we will see, norms which do not involve integrals in time are more applicable to the study of randomized Gross-Pitaevskii hierarchies.

As was noted above, the construction of the randomized collision operator leads to the study of the \emph{randomized Gross-Pitaevskii hierarchy} \eqref {RandomizedGPI} and of the \emph{independently randomized Gross-Pitaevskii hierarchy} \eqref{RandomizedGPI2}. The randomized Gross-Pitaevskii hierarchy \eqref {RandomizedGPI} shares a lot with properties of the regular Gross-Pitaevskii hierarchy, such as the existence of factorized solutions and of the applicability of the boardgame argument. As in the deterministic setting, we need to assume that our density matrices are invariant under permutation of the spatial variables $\vec{x}_k$ and $\vec{x}'_k$ in order to apply the boardgame argument. Due to the dependent randomization, we will not be able to apply the randomized spacetime estimate \eqref{Theorem1bound2} directly in this context. More precisely, we can only apply the averaged spacetime estimate to one Duhamel iteration, and as soon as we do at least two iterations of the Duhamel principle, we can no longer directly use Theorem \ref{Theorem 1}.
The reason is that we would have to estimate a quantity of the type:
$$\big\|S^{(k,\alpha)}[B_{j,k+1}]^{\omega} \gamma_{0,\omega}^{(k+1)}\big\|_{L^2(\Omega \times \Lambda^k \times \Lambda^k)}$$
where $\gamma_{0,\omega}^{(k+1)}$ \emph{depends on $\omega$}. 
The proof of Theorem \ref{Theorem 1} relies heavily on the fact that $\gamma^{(k+1)}_0$ is independent of $\omega$, which, in turn, allows us to deduce the pairing of the frequencies.

One way of getting around this difficulty is to work with the independently randomized GP hierarchy \eqref{RandomizedGPI2}. However, in this case we have to be careful and recall that it is not possible to apply the boardgame argument in this context.  The problem in applying the boardgame argument lies in the fact that, in the integrals given by the Duhamel expansion, it is possible to interchange the $t_k$ and $x_k$ variables, but it is not possible to interchange the $\omega_k$ variables. Nonetheless, in the context of the independently randomized Gross-Pitaevskii hierarchy, we can directly apply the spacetime estimate \eqref{Theorem1bound2} in order to show \eqref{Theorem2bound}.

An important observation in the proof of \eqref{Theorem2bound} is that, in \eqref{Theorem1bound2}, we do not need to put the free evolution $\mathcal{U}^{(k+1)}(t)$ and a time integral inside of the norm in order to obtain the estimate. As a result, we can just use the unitarity of $\mathcal{U}^{k+1}(t)$. An important fact which we use is the gain of $\frac{1}{n!}$ in the integral identity:
\begin{equation}
\label{integralidentity}
\int_{0}^{t_{k}} \int_{0}^{t_{k+1}} \cdots \int_{0}^{t_{n+k-1}} \,dt_{n+k}\,\cdots \,dt_{k+2} \,dt_{k+1} = \frac{t_k^n}{n!}\,.
\end{equation}
This gain was previously used in the study of the Gross-Pitaevskii hierarchy on $\mathbb{R}$ in \cite{CP2}. As a result, we can control the factorial number of Duhamel terms which we obtain in the expansion. Consequently, we can prove Theorem \ref{Smallness Bound} with a quantity $T>0$, which is independent of $n$.

A possible approach in the context of the hierarchy \eqref {RandomizedGPI} is to argue directly and prove a good spacetime estimate for higher-order Duhamel expansions without directly using \eqref{Theorem1bound2}. This reduces to a purely combinatorial problem of possible pairings of the frequencies.
However, as is shown in Subsection \ref{Difficulties arising from higher-order Duhamel expansions}, it is not possible to prove a good spatial estimate in the class of general density matrices by applying the combinatorial method used to prove \eqref{Theorem1bound2}. In Subsection \ref{Estimate in N}, an appropriate spatial estimate is shown if one imposes an additional condition of \emph{non-resonance}, similar to \cite{B3,COh,NS}. In fact, by working in the non-resonant classes, we are able to prove an estimate in a regime which allows us to go all the way down to the regularity of $L^2$, i.e. to $\alpha=0$. By studying the randomized Gross-Pitaevskii hierarchy in this, we show \eqref{Theorem3bound}.

\subsection{Organization of the paper}

In Section \ref{Notation}, we recall and define the relevant notation. In particular, in Subsection \ref{Notation1}, we recall the definition of the Fourier transform and differentiation of density matrices, whereas in Subsection \ref{Notation2}, we recall the definition of the collision operator and we give a precise definition of the randomized collision operator. The concept of a factorized solution is reviewed in Subsection \ref{Factorized solutions}.

In Section \ref{The randomized spacetime estimate}, we prove the main randomized spacetime estimate in the strong form given by \eqref{Theorem1bound2}. This is the content of Theorem \ref{Theorem 1}. In Subsection \ref{Alternative form of randomization}, we explore a different form of randomization, where the initial data is now random and we explain the difficulties of this approach in the context of the Gross-Pitaevskii hierarchy. 

Section \ref{Properties of the randomized Gross-Pitaevskii hierarchy} is devoted to the study of the randomized Gross-Pitaevskii hierarchy. More precisely, in Subsection \ref{Link with NLS}, we explore the connection between the randomized Gross-Pitaevskii hierarchy and the nonlinear Schr\"{o}dinger equation. In Remark \ref{OmegaBoardgame}, we comment on the boardgame argument in the context of the randomized Gross-Pitaevskii hierarchy.

The independently randomized Gross-Pitaevskii hierarchy is precisely defined in Section \ref{A new randomized hierarchy}. In Subsection \ref{Properties of randomizedGP2}, we explain some of the aspects in which this hierarchy is different from the (dependently) randomized Gross-Pitaevskii hierarchy as well as from the deterministic Gross-Pitaevskii hierarchy. In the study of the independently randomized Gross-Pitaevskii hierarchy, it is possible to apply the randomized spatial estimate given by Theorem \ref{Theorem 1}. This is done in Subsection \ref{Application of randomized estimate}. As a result, we obtain convergence to zero of a sequence of Duhamel terms in a low-regularity space containing a random component in Theorem \ref{Smallness Bound}. 

We revisit the (dependently) randomized Gross-Pitaevskii hierarchy in Section \ref{The randomized Gross-Pitaevskii hierarchy revisited}. In particular, in Subsection \ref{Difficulties arising from higher-order Duhamel expansions}, we give an example showing that the methods used to prove Theorem \ref{Theorem 1} do not apply to the study of higher-order Duhamel expansions. In Subsection \ref{The precise form of the Duhamel expansion term}, we explicitly write out a Duhamel expansion of order $\ell$. We also give a specific example of the expansion when $\ell=3$ for illustration purposes. In Subsection \ref{A special class of density matrices}, we define a class $\mathcal{N}$ of \emph{non-resonant} density matrices and, and we henceforth study Duhamel expansions corresponding to the dependently randomized Gross-Pitaevskii hierarchy of elements in this non-resonant class. More precisely, we prove a randomized spatial estimate for Duhamel expansions of arbitrary order starting from a density matrix in $\mathcal{N}$ in Subsection \ref{Estimate in N}. As a result, it is possible to deduce a convergence to zero of these Duhamel expansions in a low-regularity space containing a random component. This result is given in Theorem \ref{Smallness Bound 2} of Subsection \ref{The randomized Gross-Pitaevskii hierarchy in the class N}.

\subsection{Acknowledgements}
The authors would like to thank Philip Gressman and Antti Knowles for valuable discussions. They would also like to thank Hrvoje \v{S}iki\'{c} for several useful comments. They are grateful to Martin Zwierlein for directing their attention to the results \cite{Zwierlein1,Zwierlein2} in the experimental physics literature. The authors would like to thank the referee for their constructive suggestions. V. S. was partially supported by a Simons Postdoctoral Fellowship. G. S. was partially supported by NSF Grant DMS-1068815.

\section{Notation}
\label{Notation}
Let us first introduce some notation. Given two positive quantities $A$ and $B$, we write $A \lesssim B$ if there exists some constant $C>0$ such that $A \leq CB$. If the quantity $C$ depends on $q$, we write $C$ as $C(q)$. We write $A \leq C(q) B$ also as $A =\mathcal{O}_{q}(B)$ or as $A \lesssim_q B$. If $A \lesssim_q B$ and $B \lesssim_q A$, we write $A \sim_q B$. In our paper, we sometimes abbreviate the Gross-Pitaevskii hierarchy as the \emph{GP hierarchy}.

\subsection{Fourier transform and differentiation of density matrices}
\label{Notation1}

Throughout our paper, we fix the spatial domain $\Lambda$ to be the three-dimensional torus $\mathbb{T}^3$, unless it is otherwise specified.
Given $f \in L^2(\Lambda)$ and $\xi \in \mathbb{Z}^3$, we define the Fourier transform of $f$ evaluated at $\xi$ by:
$$\widehat{f}(\xi):=\int_{\Lambda} f(x) e^{- i \langle x, \xi\rangle} dx.$$
Here, the quantity $\langle \cdot, \cdot \rangle$ denotes the inner product on $\mathbb{R}^3$.
\\
When considering density matrices $\gamma_0^{(k)}: \Lambda^k \times \Lambda^k \rightarrow \mathbb{C}$, we use the same convention as in \cite{GSS} to define:
$$(\gamma_0^{(k)})\,\,\widehat{}\,\,(\vec{\xi}_k;\vec{\xi}'_k): = \int_{\Lambda^k \times \Lambda^k} \gamma_0^{(k)}(\vec{x}_k;\vec{x}'_k)
e^{-i \cdot \sum_{j=1}^{k} \langle x_j, \xi_j \rangle + i \cdot \sum_{j=1}^{k} \langle x'_j, \xi'_j \rangle} d\vec{x}_k \, d\vec{x}'_k$$
for $\vec{\xi}_k=(\xi_1,\ldots,\xi_k),\vec{\xi}'_k=(\xi'_1,\ldots,\xi'_k) \in (\mathbb{Z}^3)^k$.
In this way, the definition of the Fourier transform of density matrices is consistent with the definition of factorized solutions for the GP hierarchy \eqref{GrossPitaevskiiHierarchy}. We usually write the Fourier transform of the density matrix as $\widehat{\gamma_0}^{(k)}$.

The \emph{free evolution} corresponding to $i \partial_t + \big(\Delta_{\vec{x}_k}-\Delta_{\vec{x}'_k}\big)$, acting on density matrices of order $k$ is defined as:
$$\mathcal{U}^{(k)}(t)\gamma_0^{(k)}:=e^{it \sum_{j=1}^{k} \Delta_{x_j}} \gamma_0^{(k)} e^{-it \sum_{j=1}^{k} \Delta_{x_j'}}.$$ 
By construction, it is then the case that:
$$\Big(i \partial_t + (\Delta_{\vec{x}_k}-\Delta_{\vec{x}_k'})\Big)\,\mathcal{U}^{(k)}(t)\gamma_0^{(k)}=0.$$

Given a regularity parameter $\alpha$, we define the operator of \emph{differentiation of order $\alpha$} on density matrices of order $k$ by using the Fourier transform:
\begin{equation}
\label{FractionalDerivative}
\big(S^{(k,\alpha)} \gamma^{(k)}_0 \big)\,\,\widehat{}\,\,(\xi_1,\ldots,\xi_k;\xi'_1,\ldots,\xi'_k):=
\end{equation}
$$\langle \xi_1 \rangle^{\alpha} \cdots \langle \xi_k \rangle^{\alpha} \cdot \langle \xi'_1 \rangle^{\alpha} \cdots \langle \xi'_k \rangle^{\alpha} \cdot \widehat{\gamma_0}^{(k)} (\xi_1, \ldots, \xi_k;\xi'_1, \ldots, \xi'_k).$$ 
Here, we define the \emph{Japanese bracket} to be 
$$\langle x \rangle : = \sqrt{1+|x|^2}.$$
In other words, we can write:
\begin{equation}
\notag
S^{(k,\alpha)} \gamma_0^{(k)}= \prod_{j=1}^k (1-\Delta_{x_j})^{\frac{\alpha}{2}} (1-\Delta_{x'_j})^{\frac{\alpha}{2}} \gamma_0^{(k)}.
\end{equation}
\subsection{The collision operator and the randomized collision operator}
\label{Notation2}
Let us recall the definition of the \emph{collision operator} $B_{j,k+1}$ for $k \in \mathbb{N}$ and for $j \in \{1,\ldots,k\}$. The operator $B_{j,k+1}$ acts on density matrices of order $k+1$ and it is defined by:
$$B_{j,k+1}=B^{+}_{j,k+1}-B^{-}_{j,k+1},$$
where:
\begin{equation}
\notag
B^{+}_{j,k+1}\big(\gamma_0^{(k+1)}\big)(\vec{x}_k; \vec{x}_k'):=\int_{\Lambda} \delta(x_j-x_{k+1}) \gamma_0^{(k+1)}(\vec{x}_k,x_{k+1};\vec{x}_k',x_{k+1})\, dx_{k+1}
\end{equation}
and
\begin{equation}
\notag
B^{-}_{j,k+1}\big(\gamma_0^{(k+1)}\big)(\vec{x}_k; \vec{x}_k'):=\int_{\Lambda} \delta(x_j'-x_{k+1}) \gamma_0^{(k+1)}(\vec{x}_k,x_{k+1};\vec{x}_k',x_{k+1})\, dx_{k+1}.
\end{equation}
Hence, for example, when $j=1$:
$$B_{1,k+1}\big(\gamma^{(k+1)}_0\big)(\vec{x}_k;\vec{x}_k')= \int_{\Lambda} \gamma^{(k+1)}_0 (\vec{x}_k,x_1;\vec{x}_k',x_1)\,dx_1-\int_{\Lambda} \gamma^{(k+1)}_0 (\vec{x}_k,x'_1;\vec{x}_k',x'_1)\,dx'_1.$$
The calculation for general $j \in \{1,\ldots,k\}$ is similar.
In particular, we note that $B_{j,k+1}$ is a linear map which takes density matrices of order $k+1$ to density matrices of order $k$.

One can then compute that the Fourier transform of $B^{+}_{1,k+1} \big(\gamma_0^{(k+1)}\big)$ is given by:
\begin{equation}
\label{FourierTransform1}
(B^{+}_{1,k+1} \gamma_0^{(k+1)})\,\,\widehat{}\,\,(\vec{\xi}_k; \vec{\xi'}_k)= 
\end{equation}
$$\sum_{{\xi}_{k+1}, \, {\xi'}_{k+1} \in \mathbb{Z}^3} \, \widehat{\gamma_0}^{(k+1)}(\xi_1-\xi_{k+1}+\xi'_{k+1}, \xi_2, \ldots, \xi_k, \xi_{k+1}; \xi'_1,\ldots, \xi'_k,\xi'_{k+1}).$$
Furthermore, the Fourier transform of $B^{-}_{1,k+1} \gamma_0^{(k+1)}$ is given by:
\begin{equation}
\label{FourierTransform2}
(B^{-}_{1,k+1} \gamma_0^{(k+1)})\,\,\widehat{}\,\,(\vec{\xi}_k; \vec{\xi'}_k)= 
\end{equation} 
$$\sum_{{\xi}_{k+1}, \, {\xi'}_{k+1} \in \mathbb{Z}^3} 
\,\widehat{\gamma_0}^{(k+1)}(\xi_1, \ldots, \xi_k, \xi_{k+1}; \xi'_1-\xi'_{k+1}+\xi_{k+1}, \xi'_2, \ldots, \xi'_k,\xi'_{k+1}).$$
We note that in the calculation for $B^{-}_{1,k+1}$ the $\xi_{k+1}$ and $\xi'_{k+1}$ come with a different sign, which is a consequence of our definition of the Fourier transform of a density matrix. The calculation for general $B^{\pm}_{j,k+1}$ is similar.

We now \emph{randomize} the collision operator $B_{j,k+1}$ in order to obtain the \emph{randomized collision operator} $[B_{j,k+1}]^{\omega}$. In order to define the latter operator, we use the Fourier transform. 
Let us take $(h_{\xi})_{\xi \in \mathbb{Z}^3}$ to be a sequence of independent, identically distributed Bernoulli random variables with expectation zero and standard deviation $1$. 
We will use this notation from now on, unless we specify otherwise.
Throughout our paper, the probability space associated to this sequence of random variables is denoted by $(\Omega,\Sigma,p)$, where $\Sigma$  is the corresponding sigma-algebra and where $p$ is the probability measure. We will usually denote the probability space just by $\Omega$.


With the above notation, we define, for fixed $\omega \in \Omega$ and for a fixed density matrix $\gamma^{(k+1)}_0$ of order $k+1$:
\begin{equation}
\label{Bjkomega}
([B^{+}_{1,k+1}]^{\omega} \gamma_0^{(k+1)})\,\,\widehat{}\,\,(\vec{\xi}_k; \vec{\xi}_k'):= 
\end{equation}
$$ \sum_{\xi_{k+1},\,\xi'_{k+1} \in \mathbb{Z}^3} h_{\xi_1}(\omega) \cdot h_{\xi_1-\xi_{k+1}+\xi'_{k+1}}(\omega) \cdot h_{\xi_{k+1}}(\omega) \cdot h_{\xi'_{k+1}}(\omega) 
$$
$$\cdot \, \widehat{\gamma_0}^{(k+1)}(\xi_1-\xi_{k+1}+\xi'_{k+1}, \xi_2, \ldots, \xi_k, \xi_{k+1}; \xi'_1,\ldots, \xi'_k,\xi'_{k+1}).$$
In other words, we are just randomizing a subset of frequencies in \eqref{FourierTransform1}.
\\
Analogously, we use \eqref{FourierTransform2} and we define $[B^{-}_{1,k+1}]^{\omega}$ by:

\begin{equation}
\label{Bjkomega2}
([B^{-}_{1,k+1}]^{\omega} \gamma_0^{(k+1)})\,\,\widehat{}\,\,(\vec{\xi}_k; \vec{\xi}_k'):= 
\end{equation}
$$ \sum_{\xi_{k+1},\,\xi'_{k+1} \in \mathbb{Z}^3} h_{\xi_1}(\omega) \cdot h_{\xi_1-\xi'_{k+1}+\xi_{k+1}}(\omega) \cdot h_{\xi_{k+1}}(\omega) \cdot h_{\xi'_{k+1}}(\omega) 
$$
$$\cdot \, \widehat{\gamma_0}^{(k+1)}(\xi_1, \ldots, \xi_k, \xi_{k+1}; \xi'_1-\xi'_{k+1}+\xi_{k+1}, \xi'_2, \ldots, \xi'_k,\xi'_{k+1}).$$

The quantity $[B^{\pm}_{j,k+1}]^{\omega}$ is defined similarly, when $1 \leq j \leq k$. We can now define the \emph{randomized collision operator} as:
\begin{equation}
\label{Bjkrandomized}
[B_{j,k+1}]^{\omega}:=[B^{+}_{j,k+1}]^{\omega}-[B^{-}_{j,k+1}]^{\omega}.
\end{equation}
By construction, it follows that $[B_{j,k+1}]^{\omega}=B_{j,k+1}$, if $\omega \in \Omega$ is chosen such that $h_{\xi}(\omega)=1$ for all $\xi \in \mathbb{Z}^3$ or if $h_{\xi}(\omega)=-1$ for all $\xi \in \mathbb{Z}^3$.

We define the \emph{full randomized collision operator} by:
\begin{equation}
\label{Brandomized}
[B^{(k)}]^{\omega}:= \sum_{j=1}^{k} \, [B_{j,k+1}]^{\omega}.
\end{equation}

As a convention, we will extend the definition of the above collision operators which act on density matrices of order $k+1$ (i.e. $B_{j,k+1}$ and $\big[B_{j,k+1}\,\big]^{\omega}$) to density matrices $\sigma^{(\ell)}$ of order $\ell>k+1$. This is done by acting only in the variables $\vec{x}_{k+1}$ and $\vec{x}'_{k+1}$. We will use this convention in the discussion in Subsection \ref{The precise form of the Duhamel expansion term}.

We will sometimes write a density matrix $\gamma_0^{(k)}$ in terms of the standard Fourier basis of $L^2(\Lambda^{k} \times \Lambda^{k})$ as follows:

\begin{equation}
\label{Fourierbasis}
\gamma_0^{(k)}=\sum_{\ell_r, j_r \in \mathbb{Z}^3} a^{(k+1)}_{\ell_1,\ldots,\ell_{k}; \,j_1, \ldots, j_{k}} \cdot b^{(k+1)}_{\ell_1,\ldots,\ell_{k}; \,j_1, \ldots, j_{k}}
\end{equation}
Here, $a^{(k)}_{\ell_1,\ldots,\ell_{k};\, j_1, \ldots, j_{k}} = \widehat{\gamma_0}^{(k)}(\ell_1,\ldots,\ell_{k}; j_1, \ldots, j_{k})$ denotes the Fourier coefficient and $b^{(k)}_{\ell_1,\ldots,\ell_{k};\, j_1, \ldots, j_{k}}(\vec{x}_{k+1}; \vec{x}_{k+1}')=e^{i (x_1 \cdot \ell_1 + \cdots + x_{k} \cdot \ell_{k})-i(x'_1 \cdot j_1 + \cdots +  x'_{k} \cdot j_{k})}$ denotes the canonical Fourier basis element. 

\subsection{Factorized solutions}
\label{Factorized solutions}

Let us now precisely explain the notion of a \emph{factorized solution} to the Gross-Pitaevskii hierarchy \eqref{GrossPitaevskiiHierarchy}. Suppose that the function $\phi=\phi(x,t)$ is a solution to the defocusing cubic nonlinear Schr\"{o}dinger equation on $\Lambda$:

\begin{equation}
\notag
\begin{cases}
i\partial_t \phi + \Delta \phi = |\phi|^2 \phi \\
\phi \big|_{t=0}=\phi_0.
\end{cases}
\end{equation}
The \emph{Dirac bracket} $| \cdot \rangle \langle \cdot|$ is defined as $|f \rangle \langle g| (x,x'):=f(x) \cdot \overline{g(x')}$, for functions $f,g: \Lambda \rightarrow \mathbb{C}$.
Then, using the definition of the collision operator above, it can be shown that the sequence $(\gamma^{(k)})_k=(\gamma^{(k)}(t))_k$ of time-dependent density matrices given by:
\begin{equation}
\notag
\gamma^{(k)}(t,\vec{x}_k;\vec{x}_k'):=\prod_{j=1}^k \phi(t,x_j) \overline{\phi(t,x_j')}=|\phi \rangle \langle \phi|^{\otimes k}(t,\vec{x}_k;\vec{x}_k')
\end{equation}
solves \eqref{GrossPitaevskiiHierarchy} with the initial data $\gamma_0^{(k)}=|\phi_0 \rangle \langle \phi_0|^{\otimes k}$. We define these to be the \emph{factorized solutions} of the GP hierarchy. 

\section{The randomized spacetime estimate}
\label{The randomized spacetime estimate}
In this section, we prove the main estimate for the randomized collision operator defined in \eqref{Bjkrandomized}. As we will see, the randomization will let us extend the range of regularity exponent.
Let us recall, that in \cite{GSS}, it was proved that for the deterministic collision operator, for all $\alpha>1$, there exists $C>0$ such that for all $k \in \mathbb{N}$ and for all $1 \leq j \leq k$:

$$\big\|S^{(k,\alpha)}B_{j,k+1}\,\mathcal{U}^{(k+1)}(t)\,\gamma_{0}^{(k+1)}\big\|_{L^2([0,2\pi] \times \Lambda^k \times \Lambda^k)} \leq C\big\|S^{(k+1,\alpha)}\gamma_{0}^{(k+1)}\big\|_{L^2(\Lambda^{k+1} \times \Lambda^{k+1})}.$$
The $C$ depends on $\alpha$, but is independent of $j$ and $k$. The condition $\alpha>1$ is sharp for general density matrices. If we took factorized density matrices, we saw that we could take $\alpha=1$.

We now see how the range of $\alpha$ can be extended if we replace $B_{j,k+1}$ by $[B_{j,k+1}]^{\omega}$. In doing so, we also have to add an $\Omega$ component to the norm on the left-hand side. The main result of this section is the following stronger result:

\begin{theorem}
\label{Theorem 1} Let $\alpha>\frac{3}{4}$ be given. For all $k \in \mathbb{N}$ and for all $1 \leq j \leq k$, the following bound holds:
\begin{equation}
\notag
\big\|S^{(k,\alpha)}[B_{j,k+1}]^{\omega} \gamma_{0}^{(k+1)}\big\|_{L^2(\Omega \times \Lambda^k \times \Lambda^k)} \leq C_0 \big\|S^{(k+1,\alpha)} \gamma_0^{(k+1)}\big\|_{L^2(\Lambda^{k+1} \times \Lambda^{k+1})}.
\end{equation}
The constant $C_0>0$ depends on $\alpha$, but it is independent of $j,k$.
\\
In particular, for $[B^{(k+1)}]^{\omega}$ as defined in \eqref{Brandomized}, it follows that:
\begin{equation}
\notag
\big\|S^{(k,\alpha)} \, [B^{(k+1)}]^{\omega} \gamma_{0}^{(k+1)}\big\|_{L^2(\Omega \times \Lambda^k \times \Lambda^k)} \leq C_0 \,k \, \big\|S^{(k+1,\alpha)} \gamma_0^{(k+1)}\big\|_{L^2(\Lambda^{k+1} \times \Lambda^{k+1})}.
\end{equation}

\end{theorem}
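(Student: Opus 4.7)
The plan is first to reduce, using the triangle inequality and symmetry in $j$ and the sign $\pm$, to estimating the single piece $[B^{+}_{1,k+1}]^{\omega}$. Applying Parseval turns the square of the left-hand side into the expectation of a sum over $(\vec{\xi}_k, \vec{\xi}'_k)$ of the modulus-squared of the Fourier coefficient in \eqref{Bjkomega}. Expanding this modulus squared produces a double sum over two independent copies $(\xi_{k+1},\xi'_{k+1})$ and $(\eta_{k+1},\eta'_{k+1})$ of the collision variables, together with a product of eight Bernoulli factors. The two copies of $h_{\xi_1}$ collapse via $h_{\xi_1}^2=1$, leaving six random factors.

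The central fact I would then use is that, for $\pm 1$-valued independent mean-zero Bernoullis, the expectation of a product of $h$'s equals $1$ when every distinct index appears with even multiplicity and vanishes otherwise. This forces me to enumerate the $(6-1)!!=15$ pairings of the six indices $\{\xi_1-\xi_{k+1}+\xi'_{k+1},\xi_{k+1},\xi'_{k+1}\}$ and $\{\xi_1-\eta_{k+1}+\eta'_{k+1},\eta_{k+1},\eta'_{k+1}\}$. These split naturally into the \emph{diagonal} pairing $\xi_{k+1}=\eta_{k+1}$, $\xi'_{k+1}=\eta'_{k+1}$ (which automatically matches the middle index), and finitely many \emph{off-diagonal} pairings that each impose an extra algebraic constraint such as $\xi_{k+1}=\xi'_{k+1}$ or $\xi_1-\xi_{k+1}+\xi'_{k+1}=\xi_{k+1}$.

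For the diagonal piece, after the substitution $\eta_1:=\xi_1-\xi_{k+1}+\xi'_{k+1}$, the weight multiplying the square of the normalized Fourier coefficient $F:=\widehat{S^{(k+1,\alpha)}\gamma_0^{(k+1)}}$ reduces to
\[
\frac{\langle \eta_1+\xi_{k+1}-\xi'_{k+1}\rangle^{2\alpha}}{\langle \eta_1\rangle^{2\alpha}\langle \xi_{k+1}\rangle^{2\alpha}\langle \xi'_{k+1}\rangle^{2\alpha}},
\]
which is uniformly bounded in all three variables via the elementary Peetre-type inequality $\langle a+b-c\rangle\leq 3\langle a\rangle\langle b\rangle\langle c\rangle$; the diagonal therefore contributes at most a constant times $\|S^{(k+1,\alpha)}\gamma_0^{(k+1)}\|_{L^2}^2$ for every $\alpha\geq 0$. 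For each off-diagonal pairing I would then apply Cauchy--Schwarz in whatever free $\mathbb{Z}^3$-variable the constraint produces. The prototypical case is $\xi_{k+1}=\xi'_{k+1}=:p$, $\eta_{k+1}=\eta'_{k+1}=:q$, in which the Cauchy--Schwarz step produces the factor $\sum_{p\in\mathbb{Z}^3}\langle p\rangle^{-4\alpha}$; this is finite precisely when $4\alpha>3$, and is the origin of the threshold $\alpha>\tfrac{3}{4}$.

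The main obstacle will be the case-by-case bookkeeping of the $15$ pairings, and in particular the pairings that involve nonlinear constraints such as $\xi_1-\xi_{k+1}+\xi'_{k+1}=\xi_{k+1}$: each such constraint must be inverted so that after Cauchy--Schwarz only $\mathbb{Z}^3$-sums of the summable form $\sum\langle\cdot\rangle^{-4\alpha}$ remain. Once every pairing is shown to contribute at most $C_\alpha\|S^{(k+1,\alpha)}\gamma_0^{(k+1)}\|_{L^2}^2$ with a constant depending only on $\alpha$, summing the finitely many contributions gives the first inequality of the theorem; the second inequality for $[B^{(k+1)}]^{\omega}=\sum_{j=1}^{k}[B_{j,k+1}]^{\omega}$ follows immediately by the triangle inequality.
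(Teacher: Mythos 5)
Your proposal is correct and follows essentially the same route as the paper's proof: reduce to $[B^{+}_{1,k+1}]^{\omega}$, apply Plancherel, use the Bernoulli parity/pairing constraint after collapsing $h_{\xi_1}^2=1$, run a case analysis of the frequency coincidences via a Peetre/fractional Leibniz inequality and Cauchy--Schwarz, with the threshold $\alpha>\tfrac{3}{4}$ arising precisely from $\sum_{p\in\mathbb{Z}^3}\langle p\rangle^{-4\alpha}<\infty$ in the degenerate pairings such as $\xi_{k+1}=\xi'_{k+1}$, $\eta_{k+1}=\eta'_{k+1}$, and then the triangle inequality over $j$ for $[B^{(k+1)}]^{\omega}$. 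The only difference is organizational (you enumerate matchings of the six random indices, while the paper organizes the same coincidence patterns into its ``Big Cases''), which does not affect the argument.
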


From Theorem \ref{Theorem 1}, Fubini's Theorem and from the unitarity of $\mathcal{U}^{(k+1)}(t)$, we can deduce:

\begin{corollary}
\label{Corollary 1}
Let $\alpha>\frac{3}{4}$ be given.
For all $T>0$, $k \in \mathbb{N}$ and $1 \leq j \leq k$, the following bound holds:
\begin{equation}
\notag
\big\|S^{(k,\alpha)} \, [B_{j,k+1}]^{\omega} \, \mathcal{U}^{(k+1)}(t) \, \gamma_{0}^{(k+1)}\big\|_{L^2(\Omega \times [0,T] \times \Lambda^k \times \Lambda^k)} \leq C_0 \sqrt{T} \big\|S^{(k+1,\alpha)} \gamma_0^{(k+1)}\big\|_{L^2(\Lambda^{k+1} \times \Lambda^{k+1})}.
\end{equation}
Here, $C_0>0$ is the constant from Theorem \ref{Theorem 1}.
Hence, we can deduce that:
\begin{equation}
\notag
\big\|S^{(k,\alpha)} \, [B^{(k+1)}]^{\omega} \, \mathcal{U}^{(k+1)}(t) \, \gamma_{0}^{(k+1)}\big\|_{L^2(\Omega \times [0,T] \times \Lambda^k \times \Lambda^k)} \leq C_0 \, k \, \sqrt{T} \big\|S^{(k+1,\alpha)} \gamma_0^{(k+1)}\big\|_{L^2(\Lambda^{k+1} \times \Lambda^{k+1})}.
\end{equation}

\end{corollary}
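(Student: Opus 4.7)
The plan is a three-step reduction: pass the time integral outside the spatial/probability $L^2$ norm via Tonelli's theorem, apply Theorem \ref{Theorem 1} pointwise in $t$, and then remove the factor $\mathcal{U}^{(k+1)}(t)$ using its unitarity.

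Concretely, I would first square both sides and write, using Tonelli (the integrand is non-negative),
\begin{equation*}
\bigl\|S^{(k,\alpha)}[B_{j,k+1}]^{\omega}\mathcal{U}^{(k+1)}(t)\gamma_{0}^{(k+1)}\bigr\|_{L^2(\Omega \times [0,T] \times \Lambda^k \times \Lambda^k)}^2 = \int_0^T \bigl\|S^{(k,\alpha)}[B_{j,k+1}]^{\omega}\mathcal{U}^{(k+1)}(t)\gamma_{0}^{(k+1)}\bigr\|_{L^2(\Omega \times \Lambda^k \times \Lambda^k)}^2 \, dt.
\end{equation*}
For each fixed $t \in [0,T]$, the density matrix $\mathcal{U}^{(k+1)}(t)\gamma_{0}^{(k+1)}$ carries no $\omega$-dependence, so it is an admissible input for Theorem \ref{Theorem 1}, which yields the pointwise-in-$t$ bound
\begin{equation*}
\bigl\|S^{(k,\alpha)}[B_{j,k+1}]^{\omega}\mathcal{U}^{(k+1)}(t)\gamma_{0}^{(k+1)}\bigr\|_{L^2(\Omega \times \Lambda^k \times \Lambda^k)} \leq C_0 \bigl\|S^{(k+1,\alpha)}\mathcal{U}^{(k+1)}(t)\gamma_{0}^{(k+1)}\bigr\|_{L^2(\Lambda^{k+1} \times \Lambda^{k+1})}.
\end{equation*}

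Next I would remove the factor $\mathcal{U}^{(k+1)}(t)$ on the right. Both $S^{(k+1,\alpha)}$ and $\mathcal{U}^{(k+1)}(t)$ act on $L^2(\Lambda^{k+1}\times\Lambda^{k+1})$ as Fourier multipliers — the first by $\prod_j \langle \xi_j\rangle^{\alpha}\langle \xi'_j\rangle^{\alpha}$ and the second by the unimodular phases $e^{it(|\xi_j|^2 - |\xi'_j|^2)}$ — so they commute, and $\mathcal{U}^{(k+1)}(t)$ is an $L^2$-isometry. Therefore the right-hand side above equals $C_0\|S^{(k+1,\alpha)}\gamma_{0}^{(k+1)}\|_{L^2(\Lambda^{k+1}\times\Lambda^{k+1})}$, a quantity independent of $t$. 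Integrating the squared inequality over $[0,T]$ contributes a factor $T$, and taking square roots yields the stated $C_0\sqrt{T}$ estimate. For the second bound, I would expand $[B^{(k+1)}]^{\omega} = \sum_{j=1}^{k}[B_{j,k+1}]^{\omega}$ and invoke the triangle inequality inside the $L^2(\Omega \times [0,T] \times \Lambda^k \times \Lambda^k)$ norm, applying the first bound to each of the $k$ summands to obtain the extra factor of $k$.

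There is no substantive obstacle here: the entire content of the corollary is Theorem \ref{Theorem 1} combined with the elementary fact that unitarity and commutativity of two Fourier multipliers annihilate the free evolution inside the $L^2$-in-space norm. The only care required is verifying that Theorem \ref{Theorem 1} is legitimately applied pointwise in $t$, which holds because the $\omega$-randomness in $[B_{j,k+1}]^{\omega}\mathcal{U}^{(k+1)}(t)\gamma_{0}^{(k+1)}$ is entirely confined to the collision operator, while $\mathcal{U}^{(k+1)}(t)\gamma_{0}^{(k+1)}$ remains deterministic.
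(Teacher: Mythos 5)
Your proposal is correct and follows the same route the paper indicates for Corollary \ref{Corollary 1}: Fubini/Tonelli in $t$, the averaged estimate of Theorem \ref{Theorem 1} applied pointwise in time to the deterministic matrix $\mathcal{U}^{(k+1)}(t)\gamma_0^{(k+1)}$, unitarity of $\mathcal{U}^{(k+1)}(t)$ together with its commutation with $S^{(k+1,\alpha)}$ to remove the free evolution and produce the factor $\sqrt{T}$, and the triangle inequality over the $k$ summands for the second bound. Nothing further is needed.
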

Corollary \ref{Corollary 1} gives a direct extension of the spacetime bound from \cite{GSS} in the randomized setting. In this setting, we see that we can prove a bound when the regularity exponent satisfies $\alpha>\frac{3}{4}$. This is an improvement over the result in the deterministic setting. Let us remark, that in the proof of the deterministic spacetime bound in our previous joint work with Gressman, we encountered a sum of \emph{diagonal terms}, which was bounded precisely when $\alpha>\frac{3}{4}$. It was in the off-diagonal terms that the regularity was lost. We refer the interested reader to \cite{GSS} for more details.

By using Markov's inequality and Corollary \ref{Corollary 1}, we can deduce:

\begin{corollary}
\label{Corollary 2}
For $\alpha>\frac{3}{4}$, $T>0$, $k \in \mathbb{N}$, $1 \leq j \leq k$, and $\lambda>0$ the following bound holds:
\begin{equation}
\notag
\mathbb{P}\Big(\big\|S^{(k,\alpha)} \, [B_{j,k+1}]^{\omega} \, \mathcal{U}^{(k+1)}(t) \, \gamma_{0}^{(k+1)}\big\|_{L^2([0,T] \times \Lambda^k \times \Lambda^k)} \geq \lambda \Big) 
\end{equation}
$$\leq \frac{C_0^2 \, T \, \big\|S^{(k+1,\alpha)} \gamma_0^{(k+1)}\big\|_{L^2(\Lambda^{k+1} \times \Lambda^{k+1})}^2}{\lambda^2}\,\,.$$
Here, $C_0>0$ is the constant from Theorem \ref{Theorem 1}.
Moreover,
\begin{equation}
\notag
\mathbb{P}\Big(\big\|S^{(k,\alpha)} \, [B^{(k+1)}]^{\omega} \, \mathcal{U}^{(k+1)}(t) \, \gamma_{0}^{(k+1)}\big\|_{L^2([0,T] \times \Lambda^k \times \Lambda^k)} \geq \lambda\Big)
\end{equation}
$$\leq \frac{C_0^2 \, k^2 \, T \big\|S^{(k+1,\alpha)} \gamma_0^{(k+1)}\big\|_{L^2(\Lambda^{k+1} \times \Lambda^{k+1})}^2}{\lambda^2}\,\,.$$
\end{corollary}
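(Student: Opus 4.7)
The plan is to deduce this corollary as an immediate consequence of Corollary \ref{Corollary 1} via a standard application of Markov's inequality applied to the square of the relevant norm. Introduce the nonnegative random variable
\begin{equation*}
X(\omega) := \big\|S^{(k,\alpha)} \, [B_{j,k+1}]^{\omega} \, \mathcal{U}^{(k+1)}(t) \, \gamma_{0}^{(k+1)}\big\|_{L^2([0,T] \times \Lambda^k \times \Lambda^k)}.
\end{equation*}
The content of Corollary \ref{Corollary 1} is precisely that
\begin{equation*}
\mathbb{E}\big[X(\omega)^2\big] = \big\|S^{(k,\alpha)} \, [B_{j,k+1}]^{\omega} \, \mathcal{U}^{(k+1)}(t) \, \gamma_{0}^{(k+1)}\big\|_{L^2(\Omega \times [0,T] \times \Lambda^k \times \Lambda^k)}^2 \leq C_0^2 \, T \, \big\|S^{(k+1,\alpha)} \gamma_0^{(k+1)}\big\|_{L^2(\Lambda^{k+1} \times \Lambda^{k+1})}^2,
\end{equation*}
where we have used Fubini's theorem to interchange the $\Omega$ integration with the spacetime integration.

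Next I apply Markov's (Chebyshev's) inequality to the nonnegative random variable $X(\omega)^2$: for any $\lambda > 0$,
\begin{equation*}
\mathbb{P}\big(X(\omega) \geq \lambda\big) = \mathbb{P}\big(X(\omega)^2 \geq \lambda^2\big) \leq \frac{\mathbb{E}\big[X(\omega)^2\big]}{\lambda^2}.
\end{equation*}
Combining the two displays gives the first claimed bound.

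For the second bound, the same argument applies with $[B_{j,k+1}]^{\omega}$ replaced by the full collision operator $[B^{(k+1)}]^{\omega} = \sum_{j=1}^{k}[B_{j,k+1}]^{\omega}$. Using the second statement of Corollary \ref{Corollary 1}, which picks up a factor of $k$ from the triangle inequality over the $k$ summands, the corresponding second-moment bound becomes $C_0^2 \, k^2 \, T \, \|S^{(k+1,\alpha)} \gamma_0^{(k+1)}\|^2$, and Markov's inequality again yields the stated estimate. There is no real obstacle here: this is a purely mechanical consequence of the $L^2(\Omega)$ bound already established, so no new analytical input is required beyond the observation that an $L^2(\Omega)$ bound on a nonnegative quantity controls its tail probabilities in a quantitative way via Chebyshev.
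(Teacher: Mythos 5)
Your proof is correct and is exactly the argument the paper intends: apply Chebyshev/Markov to the square of the norm, using the second-moment bound from Corollary \ref{Corollary 1} (via Fubini) for both the single operator $[B_{j,k+1}]^{\omega}$ and the full operator $[B^{(k+1)}]^{\omega}$ with its extra factor of $k$. No differences worth noting.
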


\begin{remark}
\label{notexponential}
	Let us note that we are just considering the analogue of \eqref{GainOfIntegrability} when $p=2$ in the context of the Gross-Pitaevskii hierarchy. This already allows us to improve the range of $\alpha$ from the deterministic case. As we will see, the calculations when $p=2$ are already quite involved. This is due to the fact that there are many possible pairings in the proof of Theorem \ref{Theorem 1}. It is possible that one can extend this calculation to the case when $p$ is an arbitrary even number, which would give us an exponential upper bound on the right-hand side of the estimate in Corollary \ref{Corollary 2}, see \cite{BT1}.  The calculations for higher $p$ would become extremely difficult. We will not pursue this issue in our paper.
	
\end{remark}

We note that Corollary \ref{Corollary 2} gives a large deviation bound for the quantity that one is interested in looking at in the deterministic setting. As was noted in the introduction, this is one of the key steps in the use of randomization techniques in the analysis of nonlinear dispersive PDE. Since we would typically like to apply the spacetime estimate many times, we will see that the averaged estimate given in Corollary \ref{Corollary 1} will be easier to apply in the discussion that follows.

The proof of Theorem \ref{Theorem 1} is based on the pairing of the frequencies, similarly as in the proof of \eqref{GainOfIntegrability}, which was explained in the introduction. There are many more cases to consider in our setting. In some of the cases, we use shorthand matrix notation which is defined in Definition \ref{matrixdefinition} below.
\\
\\
We now prove Theorem \ref{Theorem 1}.

\begin{proof}
It suffices to consider the special case when $j=1$. The general case follows by analogous arguments. Moreover, we will prove the bound in the theorem for the operator $[B^{+}_{1,k+1}]^{\omega}$. The bound for $[B^{-}_{1,k+1}]^{\omega}$ is proved in the same way, and we thus obtain the claim for $[B_{1,k+1}]^{\omega}=[B^{+}_{1,k+1}]^{\omega}-[B^{-}_{1,k+1}]^{\omega}$.
\\
\\
We compute:

$$\Big(S^{(k,\alpha)}[B^{+}_{1,k+1}]^{\omega} \gamma_0^{(k+1)}\Big)\,\,\widehat{}\,\,(\vec{\xi}_k; \vec{\xi}_k')=$$ 
$$\prod_{j=1}^{k} \langle \xi_j \rangle^{\alpha} \cdot \langle \xi'_j \rangle^{\alpha}\,\cdot \Big(\sum_{\eta_1,\eta'_1 \in \mathbb{Z}^3}  h_{\xi_1}(\omega) \cdot h_{\xi_1-\eta_1+\eta'_1}(\omega) \cdot h_{\eta_1}(\omega) \cdot h_{\eta'_1}(\omega) 
$$
$$\cdot \, \widehat{\gamma_0}^{(k+1)}(\xi_1-\eta_1+\eta'_1, \xi_2, \ldots, \xi_k, \eta_1; \xi'_1,\ldots, \xi'_k,\eta'_1) \Big)$$
which by \eqref{Fourierbasis} equals
$$\prod_{j=1}^{k} \langle \xi_j \rangle^{\alpha} \cdot \langle \xi'_j \rangle^{\alpha}\,\cdot \Big(\sum_{\eta_1,\eta'_1 \in \mathbb{Z}^3}  h_{\xi_1}(\omega) \cdot h_{\xi_1-\eta_1+\eta'_1}(\omega) \cdot h_{\eta_1}(\omega) \cdot h_{\eta'_1}(\omega) 
$$
$$\cdot \, a^{(k+1)}_{\xi_1-\eta_1+\eta'_1, \xi_2, \ldots, \xi_k, \eta_1; \xi'_1,\ldots, \xi'_k,\eta'_1)} \Big).$$
Consequently: 
$$\big|(S^{(k,\alpha)}[B^{+}_{1,k+1}]^{\omega} \gamma_0^{(k+1)})\,\,\widehat{}\,\,(\vec{\xi}_k; \vec{\xi}_k')\big|^2=$$
$$=\prod_{j=1}^{k} \langle \xi_j \rangle^{2\alpha} \cdot \langle \xi_j' \rangle^{2\alpha} \cdot \Big(\sum_{\eta_1, \eta_1', \eta_2, \eta_2' \in \mathbb{Z}^3}
h_{\xi_1}(\omega) \cdot h_{\xi_1-\eta_1+\eta_1'}(\omega) \cdot h_{\eta_1}(\omega) \cdot h_{\eta_1'}(\omega)\cdot
$$
$$\overline{h}_{\xi_1}(\omega) \cdot \overline{h}_{\xi_1-\eta_2+\eta_2'}(\omega) \cdot \overline{h}_{\eta_2}(\omega) \cdot \overline{h}_{\eta_2'}(\omega) \cdot a^{(k+1)}_{\xi_1-\eta_1+\eta'_1, \xi_2, \ldots, \xi_k, \eta_1; \xi'_1,\ldots, \xi'_k,\eta'_1} \cdot$$
$$\overline{a}^{(k+1)}_{\xi_1-\eta_2+\eta'_2, \xi_2, \ldots, \xi_k, \eta_2; \xi'_1,\ldots, \xi'_k,\eta'_2}
\, \Big).$$
Since we are using Bernoulli random variables, we note that $\overline{h}=h$ and $h^2=1$. Hence, the previous expression equals:
$$\prod_{j=1}^{k} \langle \xi_j \rangle^{2\alpha} \cdot \langle \xi_j' \rangle^{2\alpha} \cdot \Big(\sum_{\eta_1, \eta_1', \eta_2, \eta_2' \in \mathbb{Z}^3}
h_{\xi_1-\eta_1+\eta_1'}(\omega) \cdot h_{\eta_1}(\omega) \cdot h_{\eta_1'}(\omega)\cdot h_{\xi_1-\eta_2+\eta_2'}(\omega) \cdot h_{\eta_2}(\omega) \cdot h_{\eta_2'}(\omega) \cdot 
$$
$$a^{(k+1)}_{\xi_1-\eta_1+\eta'_1, \xi_2, \ldots, \xi_k, \eta_1; \xi'_1,\ldots, \xi'_k,\eta'_1} \cdot \overline{a}^{(k+1)}_{\xi_1-\eta_2+\eta'_2, \xi_2, \ldots, \xi_k, \eta_2; \xi'_1,\ldots, \xi'_k,\eta'_2}
\, \Big)=$$
$$=\langle \xi_1 \rangle^{2\alpha} \cdot \Big(\sum_{\eta_1, \eta_1', \eta_2, \eta_2' \in \mathbb{Z}^3}
h_{\xi_1-\eta_1+\eta_1'}(\omega) \cdot h_{\eta_1}(\omega) \cdot h_{\eta_1'}(\omega)\cdot h_{\xi_1-\eta_2+\eta_2'}(\omega) \cdot h_{\eta_2}(\omega) \cdot h_{\eta_2'}(\omega) \cdot 
$$
$$\big[\langle \xi_2 \rangle^{\alpha} \cdot \cdots \cdot \langle \xi_k \rangle^{\alpha} \cdot \langle \xi'_1 \rangle^{\alpha} \cdot \cdots \cdot \langle \xi'_k \rangle^{\alpha} \cdot a^{(k+1)}_{\xi_1-\eta_1+\eta'_1, \xi_2, \ldots, \xi_k, \eta_1; \xi'_1,\ldots, \xi'_k,\eta'_1}\big]\cdot
$$
$$
\big[\langle \xi_2 \rangle^{\alpha} \cdot \cdots \cdot \langle \xi_k \rangle^{\alpha} \cdot \langle \xi'_1 \rangle^{\alpha} \cdot \cdots \cdot \langle \xi'_k \rangle^{\alpha} \cdot \overline{a}^{(k+1)}_{\xi_1-\eta_2+\eta'_2, \xi_2, \ldots, \xi_k, \eta_2; \xi'_1,\ldots, \xi'_k,\eta'_2} \big]
\, \Big).$$

By Plancherel's Theorem in the $x$-variables, it follows that:
\begin{equation}
\notag
I:=\big\|S^{(k,\alpha)}[B_{1,k+1}^{+}]^{\omega} \gamma_{0}^{(k+1)}\big\|_{L^2(\Omega \times \Lambda^k \times \Lambda^k)}^2=
\end{equation}
$$\sum_{\xi_1,\ldots,\xi_k, \xi'_1,\ldots,\xi'_k \in \mathbb{Z}^3}
\langle \xi_1 \rangle^{2\alpha} \cdot \Big(\sum_{\eta_1, \eta_1', \eta_2, \eta_2' \in \mathbb{Z}^3} \int_{\Omega}
h_{\xi_1-\eta_1+\eta_1'}(\omega) \cdot h_{\eta_1}(\omega) \cdot h_{\eta_1'}(\omega)\cdot h_{\xi_1-\eta_2+\eta_2'}(\omega) \cdot h_{\eta_2}(\omega) \cdot h_{\eta_2'}(\omega) \cdot 
$$
$$\big[\langle \xi_2 \rangle^{\alpha} \cdots \langle \xi_k \rangle^{\alpha} \cdot \langle \xi'_1 \rangle \cdots \langle \xi'_k \rangle^{\alpha} \cdot a^{(k+1)}_{\xi_1-\eta_1+\eta'_1, \xi_2, \ldots, \xi_k, \eta_1; \xi'_1,\ldots, \xi'_k,\eta'_1}\big]\cdot
$$
$$
\big[\langle \xi_2 \rangle^{\alpha} \cdots \langle \xi_k \rangle^{\alpha} \cdot \langle \xi'_1 \rangle \cdots \langle \xi'_k \rangle^{\alpha} \cdot \overline{a}^{(k+1)}_{\xi_1-\eta_2+\eta'_2, \xi_2, \ldots, \xi_k, \eta_2; \xi'_1,\ldots, \xi'_k,\eta'_2} \big]
\, dp(\omega) \Big).$$
\\
This is the expression that we want to estimate.
By the independence of the $(h_{\xi})$, and by the fact that they all have mean zero, it follows that each element in the set $\{\xi_1-\eta_1+\eta_1',\eta_1,\eta_1',\xi_1-\eta_2+\eta_2',\eta_2,\eta_2'\}$ occurs at least twice in the list $(\xi_1-\eta_1+\eta_1',\eta_1,\eta_1',\xi_1-\eta_2+\eta_2',\eta_2,\eta_2')$. 

Let's call this property $\textbf{(*)}$.

Since we are working with Bernoulli random variables, we note that, by the triangle inequality:

\begin{equation}
\label{Skalpha}
I \leq \mathop{\sum_{\xi_1,\ldots,\xi_k, \xi'_1,\ldots,\xi'_k, \eta_1, \eta_1', \eta_2, \eta_2' \in \mathbb{Z}^3}}^{\textbf{(*)}} 
\end{equation}
$$\Big[\langle \xi_1-\eta_1+\eta_1' \rangle^{\alpha} \cdot \langle \xi_2 \rangle^{\alpha} \cdots \langle \xi_k \rangle^{\alpha} \cdot \langle \eta_1 \rangle^{\alpha} \cdot \langle \xi'_1 \rangle^{\alpha} \cdots \langle \xi'_k \rangle^{\alpha} \cdot \langle \eta_1' \rangle^{\alpha} \cdot \big|a^{(k+1)}_{\xi_1-\eta_1+\eta'_1, \xi_2, \ldots, \xi_k, \eta_1; \xi'_1,\ldots, \xi'_k,\eta'_1}\big|\Big]\cdot
$$
$$
\Big[\langle \xi_1-\eta_2+\eta_2' \rangle^{\alpha} \cdot\langle \xi_2 \rangle^{\alpha} \cdots \langle \xi_k \rangle^{\alpha} \cdot \langle \eta_2 \rangle^{\alpha} \cdot \langle \xi'_1 \rangle^{\alpha} \cdots \langle \xi'_k \rangle^{\alpha} \cdot \langle \eta_2' \rangle^{\alpha} \cdot \big|\overline{a}^{(k+1)}_{\xi_1-\eta_2+\eta'_2, \xi_2, \ldots, \xi_k, \eta_2; \xi'_1,\ldots, \xi'_k,\eta'_2}\big| \Big]
$$
Let us denote the quantity on the right-hand side by $J$.

Here, 
$$``\mathop{\sum_{\xi_1,\ldots,\xi_k, \xi'_1,\ldots,\xi'_k, \eta_1, \eta_1', \eta_2, \eta_2' \in \mathbb{Z}^3}}^{\textbf{(*)}}"$$ 
denotes the sum over all $\xi_1,\ldots,\xi_k, \xi'_1,\ldots,\xi'_k, \eta_1, \eta_1', \eta_2, \eta_2' \in \mathbb{Z}^3$ satisfying the condition $\textbf{(*)}$ above.
In particular, we see that we need to bound the sum in the frequencies under the given constraint.

We now need to consider all of the possible cases separately. This requires a combinatorial analysis of all the possible pairings. In order to simplify the notation, we develop a matrix notation for the obtained sums. The notation is explained on an example in Definition \ref{matrixdefinition} and is used in order to analyze the subsequent cases.
\\
\\
\textbf{Big Case I)} $\mathbf{\xi_1-\eta_1+\eta_1'=\xi_1-\eta_2+\eta_2'}.$
\\
\\
In this big case, we obtain that: 
\begin{equation}
\label{BigCase1}
\eta_1'+\eta_2=\eta_1+\eta_2'.
\end{equation}
We now need to consider several possibilities:
\\
\\
\textbf{Case IA)} $\{\eta_1,\eta_1',\eta_2,\eta_2'\}$ has cardinality $2$; each element in the list $(\eta_1,\eta'_1,\eta_2,\eta'_2)$ occurs exactly twice.
\\
\\
Let us observe that:
$$\langle \xi_1 \rangle^{\alpha} \lesssim \langle \xi_1-\eta_1+\eta_1' \rangle^{\alpha} \cdot \langle \eta_1 \rangle^{\alpha} \cdot \langle \eta_1' \rangle^{\alpha}$$
and
$$\langle \xi_1 \rangle^{\alpha} \lesssim \langle \xi_1-\eta_2+\eta_2' \rangle^{\alpha} \cdot \langle \eta_2 \rangle^{\alpha} \cdot \langle \eta_2' \rangle^{\alpha}.$$
We henceforth refer to such inequalities as the \emph{fractional Leibniz Rule}.

It follows that the corresponding contribution to $J$ (defined after \eqref{Skalpha}), is:

$$\lesssim 
\mathop{\sum_{\xi_1,\ldots,\xi_k, \xi'_1,\ldots,\xi'_k, \eta_1, \eta_1', \eta_2, \eta_2' \in \mathbb{Z}^3}}_{Case \,\,IA)} 
$$
$$\Big[\langle \xi_1-\eta_1+\eta_1' \rangle^{\alpha} \cdot \langle \xi_2 \rangle^{\alpha} \cdots \langle \xi_k \rangle^{\alpha} \cdot \langle \eta_1 \rangle^{\alpha} \cdot \langle \xi'_1 \rangle^{\alpha} \cdots \langle \xi'_k \rangle^{\alpha} \cdot \langle \eta_1' \rangle^{\alpha} \cdot \big|a^{(k+1)}_{\xi_1-\eta_1+\eta'_1, \xi_2, \ldots, \xi_k, \eta_1; \xi'_1,\ldots, \xi'_k,\eta'_1}\big|\Big]\cdot
$$
$$
\Big[\langle \xi_1-\eta_2+\eta_2' \rangle^{\alpha} \cdot\langle \xi_2 \rangle^{\alpha} \cdots \langle \xi_k \rangle^{\alpha} \cdot \langle \eta_2 \rangle^{\alpha} \cdot \langle \xi'_1 \rangle^{\alpha} \cdots \langle \xi'_k \rangle^{\alpha} \cdot \langle \eta_2' \rangle^{\alpha} \cdot \big|\overline{a}^{(k+1)}_{\xi_1-\eta_2+\eta'_2, \xi_2, \ldots, \xi_k, \eta_2; \xi'_1,\ldots, \xi'_k,\eta'_2}\big| \Big]
.$$
Here, by: $$``\mathop{\sum_{\xi_1,\ldots,\xi_k, \xi'_1,\ldots,\xi'_k, \eta_1, \eta_1', \eta_2, \eta_2' \in \mathbb{Z}^3}}_{Case \,\,IA)}\,\,"$$ we denote the sum over all frequencies which occur in Case IA). 

We need to consider two possibilities:
\\
\\
\textbf{IA1)} $\eta_1=\eta_2, \,\eta_1'=\eta_2'$. 
\\
\\
We use the Cauchy-Schwarz inequality in $\eta_1, \eta_1'$, as well as in $\xi_2,\ldots,\xi_k, \xi_1',\ldots,\xi_k'$ and we use Plancherel's Theorem in order to deduce that the corresponding contribution to $J$ is:
$$\lesssim \big\|S^{(k+1,\alpha)}\gamma^{(k+1)}_0\big\|_{L^2(\Lambda^{k+1} \times \Lambda^{k+1})}^2.$$
\textbf{IA2)} $\eta_1=\eta_2',\,\eta_1'=\eta_2$.
\\
\\
We note that $\xi_1-\eta_1+\eta'_1=\xi_1-\eta_2+\eta'_2$ implies that $\eta_1=\eta'_1=\eta_2=\eta'_2$. Hence, $\{\eta_1,\eta_1',\eta_2,\eta_2'\}$ has cardinality $1$. This contradicts the assumption that this set has cardinality $2$ and hence this case does not occur.
\\
\\
\textbf{IA3)} $\eta_1=\eta_1',\,\eta_2=\eta_2'$.
\\
\\
In this case, we need to estimate:

$$\tilde{J}:= \mathop{\sum_{\xi_1,\ldots,\xi_k, \xi'_1,\ldots,\xi'_k, \eta_1,\eta_2 \in \mathbb{Z}^3}} \Big[\langle \xi_1\rangle^{\alpha} \cdot \langle \xi_2 \rangle^{\alpha} \cdots \langle \xi_k \rangle^{\alpha} \cdot \langle \xi'_1 \rangle^{\alpha} \cdots \langle \xi'_k \rangle^{\alpha} \cdot \big|a^{(k+1)}_{\xi_1, \xi_2, \ldots, \xi_k, \eta_1; \xi'_1,\ldots, \xi'_k,\eta_1}\big|\Big]\cdot
$$
$$
\Big[\langle \xi_1\rangle^{\alpha} \cdot\langle \xi_2 \rangle^{\alpha} \cdots \langle \xi_k \rangle^{\alpha} \cdot \langle \xi'_1 \rangle^{\alpha} \cdots \langle \xi'_k \rangle^{\alpha} \cdot \big|\overline{a}^{(k+1)}_{\xi_1, \xi_2, \ldots, \xi_k, \eta_2; \xi'_1,\ldots, \xi'_k,\eta_2}\big| \Big]
$$
Now, we need to argue differently, since the fractional Leibniz Rule
$$\langle \xi_1 \rangle^{\alpha} = \langle \xi_1-\eta_1+\eta_1' \rangle^{\alpha} \lesssim \langle \xi_1 \rangle^{\alpha} \cdot \langle \eta_1 \rangle^{\alpha} \cdot \langle \eta_1' \rangle^{\alpha}$$ does not give us a good upper bound. We rather leave the factor $\langle \xi_1 \rangle^{\alpha}$ by itself. 



We can then rewrite $\tilde{J}$ as:
$$ \mathop{\sum_{\xi_1,\ldots,\xi_k, \xi'_1,\ldots,\xi'_k, \eta_1,\eta_2 \in \mathbb{Z}^3}} \langle \eta_1 \rangle^{-2\alpha} \cdot \langle \eta_2 \rangle^{-2\alpha} \cdot $$
$$\Big[\langle \xi_1\rangle^{\alpha} \cdot \langle \xi_2 \rangle^{\alpha} \cdots \langle \xi_k \rangle^{\alpha} \cdot \langle \eta_1 \rangle^{\alpha} \cdot \langle \xi'_1 \rangle^{\alpha} \cdots \langle \xi'_k \rangle^{\alpha} \cdot \langle \eta_1 \rangle^{\alpha} \cdot \big|a^{(k+1)}_{\xi_1, \xi_2, \ldots, \xi_k, \eta_1; \xi'_1,\ldots, \xi'_k,\eta_1}\big|\Big]\cdot
$$
$$
\Big[\langle \xi_1\rangle^{\alpha} \cdot\langle \xi_2 \rangle^{\alpha} \cdots \langle \xi_k \rangle^{\alpha} \cdot \langle \eta_2 \rangle^{\alpha} \cdot \langle \xi'_1 \rangle^{\alpha} \cdots \langle \xi'_k \rangle^{\alpha}  \cdot \langle \eta_2 \rangle^{\alpha} \cdot \big|\overline{a}^{(k+1)}_{\xi_1, \xi_2, \ldots, \xi_k, \eta_2; \xi'_1,\ldots, \xi'_k,\eta_2}\big| \Big]
$$
Let us note that, since $\alpha>\frac{3}{4}$, the sequence $(\langle q \rangle^{-2\alpha})_{q \in \mathbb{Z}^3}$ belongs to $\ell^2(\mathbb{Z}^3)$.
Hence, we can use the Cauchy-Schwarz inequality in $\xi_1,\ldots \xi_k, \xi'_1, \ldots, \xi'_k, \eta_1, \eta_2$ and Plancherel's Theorem in order to deduce that:
\\
\begin{equation}
\label{SCBbound}
\tilde{J} \lesssim \big\|S^{(k+1,\alpha)}\gamma_0^{(k+1)}\big\|_{L^2(\Lambda^{k+1} \times \Lambda^{k+1})}^2.
\end{equation}
\\
\\
\textbf{Case IB)} $\{\eta_1,\eta_1',\eta_2,\eta_2'\}$ has cardinality $2$; one element in the list $(\eta_1,\eta_1',\eta_2,\eta_2')$ occurs three times.
\\
\\
We recall from \eqref{BigCase1} that $\eta'_1+\eta_2=\eta_1+\eta'_2$. From this identity, it follows that if one element in the list $(\eta_1,\eta'_1,\eta_2,\eta'_2)$ occurs three times, then they all have to be equal. In this way, we obtain a contradiction to the assumption that $\{\eta_1,\eta'_1,\eta_2,\eta'_2\}$ has cardinality $2$.
\\
\\
\textbf{Case IC)} $\{\eta_1,\eta'_1,\eta_2,\eta'_2\}$ has cardinality $3$.
\\
\\
We recall property $\textbf{(*)}$, and the fact that in Big Case $I$, one has: $\xi_1-\eta_1+\eta'_1=\xi_1-\eta_2+\eta'_2$, from where it follows that each element in the list: $$(\eta_1, \eta_2, \eta'_1,\eta'_2)$$ either occurs at least twice or it equals $\xi_1-\eta_1+\eta'_1$. In this way, we get a contradiction to the assumption that $\{\eta_1,\eta'_1,\eta_2,\eta'_2\}$ has cardinality $3$. Namely, if $\{\eta_1,\eta'_1,\eta_2,\eta'_2\}$ had cardinality $3$, the two distinct elements of the list $(\eta_1,\eta'_1,\eta_2,\eta'_2)$ which occur only once would both have to equal $\xi_1-\eta_1+\eta'_1$.
\\
\\
\textbf{Case ID)} $\{\eta_1,\eta'_1,\eta_2,\eta'_2\}$ has cardinality $4$.
\\
\\
In this case, we obtain a contradiction in the same way as in case IC).
\\
\\
\textbf{Case IE)} $\{\eta_1,\eta'_1,\eta_2,\eta'_2\}$ has cardinality $1$.
\\
\\
In this case, we note that:
$$\xi_1-\eta_1+\eta'_1=\xi_1-\eta_2+\eta'_2=\xi_1$$
and
$$\eta_1=\eta'_1=\eta_2=\eta'_2.$$
Hence, we need to estimate:
$$\sum_{\xi_1,\ldots,\xi_k, \xi'_1,\ldots,\xi'_k, \eta_1 \in \mathbb{Z}^3} 
$$
$$\Big[\langle \xi_1\rangle^{\alpha} \cdot \langle \xi_2 \rangle^{\alpha} \cdots \langle \xi_k \rangle^{\alpha} \cdot \langle \xi'_1 \rangle^{\alpha} \cdots \langle \xi'_k \rangle^{\alpha} \cdot \big|a^{(k+1)}_{\xi_1, \xi_2, \ldots, \xi_k, \eta_1; \xi'_1,\ldots, \xi'_k,\eta_1}\big|\Big]\cdot
$$
$$
\Big[\langle \xi_1\rangle^{\alpha} \cdot\langle \xi_2 \rangle^{\alpha} \cdots \langle \xi_k \rangle^{\alpha} \cdot \langle \xi'_1 \rangle^{\alpha} \cdots \langle \xi'_k \rangle^{\alpha}  \cdot \big|\overline{a}^{(k+1)}_{\xi_1, \xi_2, \ldots, \xi_k, \eta_1; \xi'_1,\ldots, \xi'_k,\eta_1}\big| \Big]
$$
We can use the Cauchy-Schwarz inequality in $\xi_1,\ldots,\xi_k, \xi'_1,\ldots,\xi'_k,\eta_1 $ and Plancherel's Theorem to deduce that the above expression is, in absolute value $$\lesssim 
\|S^{(k+1,\alpha)} \gamma^{(k+1)}_0\|_{L^2(\Lambda^{k+1} \times \Lambda^{k+1})}^2.$$
Here, we only used the fact that $\alpha \geq 0$.

Consequently, we obtain the wanted bound in Big Case I.

In what follows, we may assume that:

\begin{equation}
\label{BigCaseIbound}
\xi_1-\eta_1+\eta'_1 \neq \xi_1-\eta_2+\eta'_2.
\end{equation}
By condition \textbf{(*)}, it follows that $\xi_1-\eta_1+\eta'_1$ and $\xi_1-\eta_2+\eta'_2$ each get paired with distinct elements of $\{\eta_1,\eta'_1,\eta_2,\eta'_2\}$.
\\
\\
\textbf{Big Case II)} $\mathbf{\eta_1=\eta'_1}$.
\\
\\
In this Big Case, we obtain:
$$\xi_1-\eta_1+\eta'_1=\xi_1.$$
Furthermore, the condition \eqref{BigCaseIbound} implies that:
\begin{equation}
\label{eta2eta'2}
\eta_2 \neq \eta'_2.
\end{equation}
By using the condition \textbf{(*)}, it follows that:
$$\eta_2,\eta'_2 \in \{\xi_1,\eta_1,\xi_1-\eta_2+\eta'_2\}$$
We need to consider several cases:
\\
\\
\textbf{Case IIA)} $\eta_2,\eta'_2 \in \{\xi_1,\eta_1\}$ .
\\
\\
In case IIA), we need to consider several subcases:
\\
\\
\textbf{IIA1)} $\eta_2=\xi_1, \eta'_2=\eta_1$.
\\
\\
In this subcase, we know: $\xi_1-\eta_2+\eta'_2=\xi_1-\xi_1+\eta_1=\eta_1$.

In other words:
\begin{equation}
\notag
\begin{cases}
\xi_1-\eta_1+\eta'_1=\xi_1\\
\xi_1-\eta_2+\eta'_2=\eta_1\\
\eta_1=\eta'_1\\
\eta_2=\xi_1\\
\eta'_2=\eta_1
\end{cases}
\end{equation}
It follows that we have to bound:
\begin{equation}
\label{encodesum}
\sum_{\xi_1,\ldots,\xi_k,\xi'_1,\ldots,\xi'_k, \eta_1 \in \mathbb{Z}^3} 
\end{equation}
$$
\Big[\langle \xi_1 \rangle^{\alpha} \cdot \langle \xi_2 \rangle^{\alpha} \cdots \langle \xi_k \rangle^{\alpha} \cdot \langle \eta_1 \rangle^{0} \cdot \langle \xi'_1 \rangle^{\alpha} \cdots \langle \xi'_k \rangle^{\alpha} \cdot \langle \eta_1 \rangle^{0} \cdot \big|a^{(k+1)}_{\xi_1, \xi_2, \ldots, \xi_k, \eta_1; \xi'_1,\ldots, \xi'_k, \eta_1}\big|\Big]\cdot 
$$
$$
\Big[\langle \eta_1 \rangle^{0} \cdot\langle \xi_2 \rangle^{\alpha} \cdots \langle \xi_k \rangle^{\alpha} \cdot \langle \xi_1 \rangle^{\alpha} \cdot \langle \xi'_1 \rangle^{\alpha} \cdots \langle \xi'_k \rangle^{\alpha} \cdot \langle \eta_1 \rangle^{0} \cdot \big|\overline{a}^{(k+1)}_{\eta_1, \xi_2, \ldots, \xi_k, \xi_1; \xi'_1,\ldots, \xi'_k,\eta_1}\big| \Big].
$$
We bound the above expression in absolute value by using the Cauchy-Schwarz inequality in $\xi_1, \eta_1$ and in $\xi_2,\ldots,\xi_k, \xi'_1, \ldots, \xi'_k$ to obtain the bound $\lesssim \|S^{(k+1,\alpha)}\gamma^{(k+1)}_0\|_{L^2(\Lambda^{k+1} \times \Lambda^{k+1})}^2$. 
\\
\\
In what follows, we note that the application of the Cauchy-Schwarz inequality in $\xi_1,\ldots,\xi_k, \xi'_1, \ldots, \xi'_k$ is always straightforward. 
\\
\\
Let us now develop some shorthand notation:

\begin{definition}
\label{matrixdefinition}
Let $\mathcal{C}$ be a list of variables in $\{\xi_1,\eta_1,\eta'_1,\eta_2,\eta'_2\}$. Furthermore, let $\mu_j;j=1,\ldots,6$ be linear combinations of elements of the list $\mathcal{C}$ and let $\alpha_j; j=1,\ldots,6$ be elements of $\{0,\alpha\}$.
We define the quantity:
\begin{equation}
\notag
\mathcal{C}\,; \begin{bmatrix}
\langle \mu_1 \rangle^{\alpha_1} & \langle \mu_2 \rangle^{\alpha_2} & \langle \mu_3 \rangle^{\alpha_3} \\
\langle \mu_4 \rangle^{\alpha_4} & \langle \mu_5 \rangle^{\alpha_5} & \langle \mu_6 \rangle^{\alpha_6} 
\end{bmatrix}
\end{equation}
to be the sum over the elements of the list $\mathcal{A}$ as well as over $\xi_2,\xi_3,\ldots,\xi_k,\xi'_1,\xi'_2,\ldots,\xi'_k$ of:
$$
\Big[\langle \mu_1 \rangle^{\alpha_1} \cdot \langle \xi_2 \rangle^{\alpha} \cdots \langle \xi_k \rangle^{\alpha} \cdot \langle \mu_2 \rangle^{\alpha_2} \cdot \langle \xi'_1 \rangle^{\alpha} \cdots \langle \xi'_k \rangle^{\alpha} \cdot \langle \mu_3 \rangle^{\alpha_3} \cdot \big|a^{(k+1)}_{\mu_1, \xi_2, \ldots, \xi_k, \mu_2; \xi'_1,\ldots, \xi'_k, \mu_3}\big|\Big]\cdot 
$$
$$
\Big[\langle \mu_4 \rangle^{\alpha_4} \cdot\langle \xi_2 \rangle^{\alpha} \cdots \langle \xi_k \rangle^{\alpha} \cdot \langle \mu_5 \rangle^{\alpha_5} \cdot \langle \xi'_1 \rangle^{\alpha} \cdots \langle \xi'_k \rangle^{\alpha} \cdot \langle \mu_6 \rangle^{\alpha_6} \cdot \big|\overline{a}^{(k+1)}_{\mu_4, \xi_2, \ldots, \xi_k, \mu_5; \xi'_1,\ldots, \xi'_k,\mu_6}\big| \Big].
$$
\end{definition}

\begin{remark}
In other words, the entries of the first row of the matrix correspond to the derivatives on the Fourier side applied to the first, $(k+1)$-st, and $(2k+2)$-nd component of the factor of $\widehat{\gamma_0}^{(k+1)}$ which is given by: 
$$a^{(k+1)}_{\mu_1, \xi_2, \ldots, \xi_k, \mu_2; \xi'_1,\ldots, \xi'_k, \mu_3}=\widehat{\gamma_0}^{(k+1)}(\mu_1, \xi_2, \ldots, \xi_k, \mu_2; \xi'_1,\ldots, \xi'_k, \mu_3).$$ 
Hence, we apply derivatives on the Fourier side to the $\xi_1,\xi_{k+1},\xi'_{k+1}$ components of this factor. 

The second row of the matrix corresponds to the same derivatives of the factor of $\overline{\widehat{\gamma_0}}^{(k+1)}$, which is given by:

$$\overline{a}^{(k+1)}_{\mu_4, \xi_2, \ldots, \xi_k, \mu_5; \xi'_1,\ldots, \xi'_k,\mu_6}=\overline{\widehat{\gamma_0}}^{(k+1)}(\mu_4, \xi_2, \ldots, \xi_k, \mu_5; \xi'_1,\ldots, \xi'_k,\mu_6).$$
As before, we apply derivatives on the Fourier side to the $\xi_1,\xi_{k+1},\xi'_{k+1}$ components of this factor. 

In the definition, the variables which occur before the matrix denote the variables with respect to which we are summing. In this notation, we are excluding the variables $\xi_2, \ldots, \xi_k, \xi'_1, \xi_2, \ldots, \xi'_k$, since we noted above that the application of the Cauchy-Schwarz inequality in these variables is straightforward. 
\end{remark}

By using the the above definition, we can write the sum in \eqref{encodesum} as:

\begin{equation}
\label{matrix}
\xi_1,\eta_1; \begin{bmatrix}
\langle \xi_1 \rangle^{\alpha} & \langle \eta_1 \rangle^{0} & \langle \eta_1 \rangle^{0} \\
\langle \eta_1 \rangle^{0} & \langle \xi_1 \rangle^{\alpha} & \langle \eta_1 \rangle^{0} 
\end{bmatrix}
\end{equation}

We see that, in subcase IIA1), we can apply the Cauchy-Schwarz inequality in $\xi_1, \eta_1$ and obtain the wanted bound since these variables occur in the first and in the second row of the matrix. With the above shorthand notation and with these observations in mind, we continue the analysis of the possible cases.
\\
\\
\textbf{IIA2)} $\eta_2=\eta_1, \eta'_2=\xi_1$.
\\
\\
In this subcase, we know that:
\begin{equation}
\notag
\begin{cases}
\xi_1-\eta_1+\eta'_1=\xi_1\\
\xi_1-\eta_2+\eta'_2=2\xi_1-\eta_1\\
\eta'_1=\eta_1\\
\eta_2=\eta_1\\
\eta'_2=\xi_1
\end{cases}
\end{equation}
Moreover, we use the fractional Leibniz rule in the second row in order to deduce that:
$$\langle \xi_1 \rangle^{\alpha} \lesssim \langle \xi_1-\eta_2+\eta'_2 \rangle^{\alpha} \cdot \langle \eta_2 \rangle^{\alpha} \cdot \langle \eta'_2 \rangle^{\alpha} = \langle 2\xi_1-\eta_1 \rangle^{\alpha} \cdot \langle \eta_1 \rangle^{\alpha} \cdot \langle \xi_1 \rangle^{\alpha}$$
Consequently, arguing similarly as in Case IA), and taking into account the shorthand matrix notation, we need to consider:

$$\xi_1, \eta_1; \begin{bmatrix} \langle \xi_1 \rangle^{\alpha} & \langle \eta_1 \rangle^{0} & \langle \eta_1 \rangle^{0}\\
\langle 2\xi_1 - \eta_1 \rangle^{\alpha} & \langle \eta_1 \rangle^{\alpha} & \langle \xi_1 \rangle^{\alpha} 
\end{bmatrix}
$$
We can now apply the Cauchy-Schwarz inequality in $\xi_1$ and then in $\eta_1$ to obtain the wanted bound in this subcase.
\\
\\
\textbf{Case IIB)} $\eta_2=\xi_1-\eta_2+\eta'_2.$
\\
\\
We recall by \eqref{eta2eta'2} that $\eta'_2 \neq \eta_2$, so $\eta'_2 \neq \xi_1-\eta_2+\eta'_2$. 
\\
By the condition \textbf{(*)}, it follows that:
$$\eta'_2 \in \{\xi_1,\eta_1\}.$$
\\
We need to consider two possible subcases:
\\
\\
\textbf{IIB1)} $\eta'_2=\xi_1$. 
\\
\\
We obtain that: $\eta_2=\xi_1-\eta_2+\xi_1$, which implies: $\eta_2=\xi_1=\eta'_2$.
This is a contradiction, since we assumed that $\eta_2 \neq \eta'_2$.
\\
\\
\textbf{IIB2)} $\eta'_2=\eta_1$.
\\
\\
We recall that $\eta_2=\xi_1-\eta_2+\eta'_2$, hence it follows that:
$$\eta_2=\frac{\xi_1+\eta'_2}{2}=\frac{\xi_1+\eta_1}{2}.$$
In this subcase, we know that:

\begin{equation}
\notag
\begin{cases}
\xi_1-\eta_1+\eta'_1=\xi_1\\
\xi_1-\eta_2+\eta'_2=\eta_2=\frac{\xi_1+\eta_1}{2}\\
\eta'_1=\eta_1\\
\eta_2=\frac{\xi_1+\eta_1}{2}\\
\eta'_2=\eta_1
\end{cases}
\end{equation}

We use the fractional Leibniz rule in the second row in order to observe that:
$$\langle \xi_1 \rangle^{\alpha} \lesssim \langle \xi_1-\eta_2+\eta'_2 \rangle^{\alpha} \cdot \langle \eta_2 \rangle^{\alpha} \cdot \langle \eta'_2 \rangle^{\alpha} \lesssim \Big \langle \frac{\xi_1+\eta_1}{2} \Big \rangle^{\alpha} \cdot \Big \langle \frac{\xi_1+\eta_1}{2} \Big \rangle^{\alpha} \cdot \langle \eta_1 \rangle^{\alpha}$$
Hence, we need to consider:
$$\xi_1, \eta_1; \begin{bmatrix} \langle \xi_1 \rangle^{\alpha} & \langle \eta_1 \rangle^{0} & \langle \eta_1 \rangle^{0}\\
\Big \langle \frac{\xi_1+\eta_1}{2} \Big \rangle^{\alpha} & \Big \langle \frac{\xi_1+\eta_1}{2} \Big \rangle^{\alpha} & \langle \eta_1 \rangle^{\alpha} 
\end{bmatrix}$$
We now obtain the wanted bound by first applying the Cauchy-Schwarz inequality, first in $\xi_1$, and then in $\eta_1$.
\\
\\
\textbf{Case IIC)} $\eta'_2=\xi_1-\eta_2+\eta'_2$.
\\
\\
In this case, we obtain that $\xi_1=\eta_2$. 
More precisely, we know that:
\begin{equation}
\notag
\begin{cases}
\xi_1-\eta_1+\eta'_1=\xi_1\\
\xi_1-\eta_2+\eta'_2=\eta'_2\\
\eta'_1=\eta_1\\
\eta_2=\xi_1
\end{cases}
\end{equation}
We need to estimate:
$$\xi_1,\eta_1,\eta'_2; \begin{bmatrix} \langle \xi_1 \rangle^{\alpha} & \langle \eta_1 \rangle^{0} & \langle \eta_1 \rangle^{0}\\ 
\langle \eta'_2 \rangle^{0} & \langle \xi_1 \rangle^{\alpha} & \langle \eta'_2 \rangle^{0} \end{bmatrix}$$ 
In this case, we cannot directly apply the Cauchy-Schwarz in the variables $\eta_1,\eta'_2$ since the pairs of $\langle \eta_1 \rangle^0$ and $\langle \eta'_2 \rangle^0$ occur in the same rows of the matrix. Instead, we need to argue as in subcase IA3). We multiply and divide the $(1,2)$ and $(1,3)$ entry of the matrix with $\langle \eta_1 \rangle^{\alpha}$. Furthermore, we multiply and divide the $(2,1)$ and $(2,3)$ entry of the matrix by $\langle \eta'_2 \rangle^{\alpha}$ to deduce that we have to consider:

\begin{equation}
\label{newmatrix}
\xi_1,\eta_1,\eta'_2; \langle \eta_1 \rangle^{-2\alpha} \cdot \langle \eta'_2 \rangle^{-2\alpha} \cdot \begin{bmatrix} \langle \xi_1 \rangle^{\alpha} & \langle \eta_1 \rangle^{\alpha} & \langle \eta_1 \rangle^{\alpha}\\ 
\langle \eta'_2 \rangle^{\alpha} & \langle \xi_1 \rangle^{\alpha} & \langle \eta'_2 \rangle^{\alpha} \end{bmatrix}
\end{equation} 
In this notation, we mean that we multiply the result obtained from the matrix by $\langle \eta_1 \rangle^{-2\alpha} \cdot \langle \eta_2 \rangle^{-2\alpha}$ and we then sum in $\xi_1, \eta_1,\eta'_2$. (Let us emphasize that we do not multiply each row of the matrix by $\langle \eta_1 \rangle^{-2\alpha} \cdot \langle \eta_2 \rangle^{-2\alpha}.$)
We now obtain the wanted bound in this subcase after we apply the Cauchy-Schwarz inequality in $\xi_1,\eta_1,\eta'_2$. In order to bound the sum in $\eta_1, \eta'_2$, we need to use the assumption that $\alpha>\frac{3}{4}$ as in subcase IA3).
\\
\\
The bound in the big case when $\eta_1=\eta'_1$ now follows. The big case when $\eta_2=\eta'_2$ is analogous.
\\
\\
\textbf{Big Case III)} $\mathbf{\eta_1=\eta_2}$.
\\
\\
By the previous analysis, we may assume:
$$\xi_1-\eta_1+\eta'_1 \neq \xi_1-\eta_2+\eta'_2, \, \eta_1 \neq \eta'_1,\,\eta_2 \neq \eta'_2.$$
Since $\eta_1=\eta_2$, we note that $\xi_1-\eta_1+\eta'_1 \neq \xi_1-\eta_1+\eta'_2$, from where we deduce that $\eta'_1 \neq \eta'_2$.
Consequently, condition \textbf{(*)} implies that:
$$\eta'_1 \in \{\xi_1-\eta_1+\eta'_1,\xi_1-\eta_2+\eta'_2\}=\{\xi_1-\eta_1+\eta'_1,\xi_1-\eta_1+\eta'_2\}.$$
We consider the following cases:
\\
\\
\textbf{Case IIIA)} $\xi_1-\eta_1+\eta'_1=\eta'_1$.
\\
\\
In this case, we obtain that $\xi_1=\eta_1$. Hence:
$$\xi_1=\eta_1=\eta_2$$
and
$$\xi_1-\eta_2+\eta'_2=\xi_1-\xi_1+\eta'_2=\eta'_2.$$
In the matrix notation, we need to estimate:
$$\xi_1,\eta'_1,\eta'_2; \begin{bmatrix} \langle \eta'_1 \rangle^{0} & \langle \xi_1 \rangle^{\alpha} & \langle \eta'_1 \rangle^{0} \\
\langle \eta'_2 \rangle^{0} & \langle \xi_1 \rangle^{\alpha} & \langle \eta'_2 \rangle^{0} \end{bmatrix}
$$
Similarly as in Case IIC), we cannot estimate this expression directly by using the Cauchy-Schwarz inequality, but we need to write it as:
\begin{equation}
\label{newmatrixapplication}
\xi_1,\eta'_1,\eta'_2; \langle \eta'_1 \rangle^{-2\alpha} \cdot \langle \eta'_2 \rangle^{-2\alpha} \cdot \begin{bmatrix} \langle \eta'_1 \rangle^{\alpha} & \langle \xi_1 \rangle^{\alpha} & \langle \eta'_1 \rangle^{\alpha}\\
\langle \eta'_2 \rangle^{\alpha} & \langle \xi_1 \rangle^{\alpha} & \langle \eta'_2 \rangle^{\alpha} \end{bmatrix}
\end{equation}
which we can estimate by using the Cauchy-Schwarz inequality in $\xi_1,\eta'_1,\eta'_2$ since $\alpha>\frac{3}{4}$. We note that in \eqref{newmatrixapplication} we are using the analogous notational convention as was defined in \eqref{newmatrix}. 
\\
\\
\textbf{Case IIIB)} $\xi_1-\eta_2+\eta'_2=\eta'_1$.
\\
\\
In this case, we obtain:
$$\eta'_2=\eta_2+\eta'_1-\xi_1=\eta_1+\eta'_1-\xi_1.$$
We use the fractional Leibniz rule as:
$$\langle \xi_1 \rangle^{\alpha} \lesssim \langle \xi_1-\eta_1+\eta'_1 \rangle^{\alpha} \cdot \langle \eta_1 \rangle^{\alpha} \cdot \langle \eta'_1 \rangle^{\alpha}$$
in the first row and as:
$$\langle \xi_1 \rangle^{\alpha} \lesssim \langle \xi_1-\eta_2+\eta'_2 \rangle^{\alpha} \cdot \langle \eta_2 \rangle^{\alpha} \cdot \langle \eta'_2 \rangle^{\alpha}=\langle \eta'_1 \rangle^{\alpha} \cdot \langle \eta_1 \rangle^{\alpha} \cdot \langle \eta_1+\eta'_1-\xi_1 \rangle^{\alpha}$$
in the second row.
Consequently, we need to estimate:
$$\xi_1,\eta_1,\eta'_1; \begin{bmatrix} \langle \xi_1-\eta_1+\eta'_1 \rangle^{\alpha} & \langle \eta_1 \rangle^{\alpha} & \langle \eta'_1 \rangle^{\alpha} \\
\langle \eta'_1 \rangle^{\alpha} & \langle \eta_1 \rangle^{\alpha} & \langle \eta_1+\eta'_1-\xi_1 \rangle^{\alpha} \end{bmatrix}$$
We estimate this expression by first applying the Cauchy-Schwarz inequality in $\xi_1$ and then applying the Cauchy-Schwarz inequality in $\eta_1,\eta'_1$ to obtain the wanted bound.
\\
\\
\textbf{Big Case IV)} $\mathbf{\eta'_1=\eta'_2}$.
\\
\\
From the previous analysis, we may assume that:
$$\xi_1-\eta_1+\eta'_1 \neq \xi_1-\eta_2+\eta'_2,\,\eta_1 \neq \eta_2, \, \eta_1 \neq \eta'_1, \, \eta_2 \neq \eta'_2.$$
Hence, from condition \textbf{(*)}, we obtain:
$$\{\eta_1, \eta_2\} = \{\xi_1-\eta_1+\eta'_1, \xi_1-\eta_2+\eta'_2\}.$$
We need to consider two cases:
\\
\\
\textbf{Case IVA)} $\xi_1-\eta_1+\eta'_1=\eta_1$.
\\
\\
In this case, we obtain that:
$$\eta_1=\frac{\xi_1+\eta'_1}{2}.$$
We also know that $\eta_2=\xi_1-\eta_2+\eta'_2$ and $\eta'_1=\eta'_2$ and hence:
$$\eta_2=\frac{\xi_1+\eta'_2}{2}=\frac{\xi_1+\eta'_1}{2}.$$
It follows that:
$$\eta_1=\eta_2=\frac{\xi_1+\eta'_1}{2}.$$
This gives us a contradiction since we assumed that $\eta_1 \neq \eta_2$.
\\
\\
\textbf{Case IVB)} $\xi_1-\eta_2+\eta'_2=\eta_1$.
\\
\\
Since $\eta'_1=\eta'_2$, it follows that $\xi_1-\eta_2+\eta'_1=\xi_1-\eta_2+\eta'_2=\eta_1$.
Hence:
$$\eta_2=\xi_1-\eta_1+\eta'_1.$$
We use the fractional Leibniz rule as:
$$\langle \xi_1 \rangle^{\alpha} \lesssim \langle \xi_1-\eta_1+\eta'_1 \rangle^{\alpha} \cdot \langle \eta_1 \rangle^{\alpha} \cdot \langle \eta'_1 \rangle^{\alpha}$$
in the first row and as:
$$\langle \xi_1 \rangle^{\alpha} \lesssim \langle \xi_1-\eta_2+\eta'_2 \rangle^{\alpha} \cdot \langle \eta_2 \rangle^{\alpha} \cdot \langle \eta'_2 \rangle^{\alpha}=
\langle \eta_1 \rangle^{\alpha} \cdot \langle \xi_1-\eta_1+\eta'_1 \rangle^{\alpha} \cdot \langle \eta'_1 \rangle^{\alpha}$$
in the second row.
Consequently, we need to estimate:
$$\xi_1, \eta_1, \eta'_1; \begin{bmatrix} \langle \xi_1-\eta_1+\eta'_1 \rangle^{\alpha} & \langle \eta_1 \rangle^{\alpha} & \langle \eta'_1 \rangle^{\alpha}\\
\langle \eta_1 \rangle^{\alpha} & \langle \xi_1-\eta_1+\eta'_1 \rangle^{\alpha} & \langle \eta'_1 \rangle^{\alpha} \end{bmatrix}
$$
We estimate this expression by first applying the Cauchy-Schwarz inequality in $\xi_1$ and then in $\eta_1,\eta'_1$ and we obtain the wanted bound.
\\
\\
\textbf{Big Case V)} $\mathbf{\eta_1=\eta'_2}$.
\\
\\
By the previous analysis, we may assume that:
\begin{equation}
\label{smalltriangle}
\xi_1-\eta_1+\eta'_1 \neq \xi_1-\eta_2+\eta'_2,\,\eta_1 \neq \eta_2,\,\eta_1 \neq \eta'_1,\,\eta_2 \neq \eta'_2,\,\eta'_1 \neq \eta'_2.
\end{equation}
We need to consider several cases:
\\
\\
\textbf{Case VA)} $\eta_2=\eta'_1$.
\\
\\
In this case, we can use the fractional Leibniz rule as:
$$\langle \xi_1 \rangle^{\alpha} \lesssim \langle \xi_1-\eta_1+\eta'_1 \rangle^{\alpha} \cdot \langle \eta_1 \rangle^{\alpha} \cdot \langle \eta'_1 \rangle^{\alpha} = \langle \xi_1-\eta_1+\eta_2 \rangle^{\alpha} \cdot \langle \eta_1 \rangle^{\alpha} \cdot \langle \eta_2 \rangle^{\alpha}$$
in the first row and as:
$$\langle \xi_1 \rangle^{\alpha} \lesssim \langle \xi_1-\eta_2+\eta'_2 \rangle^{\alpha} \cdot \langle \eta_2 \rangle^{\alpha} \cdot \langle \eta'_2 \rangle^{\alpha}= \langle \xi_1-\eta_2+\eta_1 \rangle^{\alpha} \cdot \langle \eta_2 \rangle^{\alpha} \cdot \langle \eta_1 \rangle^{\alpha}$$
in the second row.
Consequently, we need to estimate:
$$\xi_1,\eta_1,\eta_2; \begin{bmatrix} \langle \xi_1-\eta_1+\eta_2 \rangle^{\alpha} & \langle \eta_1 \rangle^{\alpha} & \langle \eta_2 \rangle^{\alpha} \\
\langle \xi_1-\eta_2+\eta_1 \rangle^{\alpha} & \langle \eta_2 \rangle^{\alpha} & \langle \eta_1 \rangle^{\alpha} \end{bmatrix}$$
We can estimate this expression by first applying the Cauchy-Schwarz inequality in $\xi_1$ and then in $\eta_1,\eta_2$ to obtain the wanted bound.
\\
The next case we need to consider is:
\\
\\
\textbf{Case VB)} $\eta_2 \neq \eta'_1$.
\\
\\
From condition \textbf{(*)}, the fact that $\eta'_2=\eta_1$, and from \eqref{smalltriangle}, it follows that in this case, one has:
$$\{\eta'_1,\eta_2\}=\{\xi_1-\eta_1+\eta'_1,\xi_1-\eta_2+\eta'_2\}=\{\xi_1-\eta_1+\eta'_1,\xi_1-\eta_2+\eta_1\}.$$
We need to consider two subcases:
\\
\\
\textbf{VB1)} $\eta'_1=\xi_1-\eta_1+\eta'_1,\,\eta_2=\xi_1-\eta_2+\eta_1$.
\\
\\
From the first equality, it follows that $\xi_1=\eta_1$.
From the second equality, it follows that $\eta_2=\frac{\xi_1+\eta_1}{2}$.
Consequently:
$$\eta_2=\xi_1=\eta_1.$$
This gives us a contradiction since we are assuming that $\eta_1 \neq \eta_2$.
\\
\\
\textbf{VB2)} $\eta'_1=\xi_1-\eta_2+\eta_1,\,\eta_2=\xi_1-\eta_1+\eta'_1$.
\\
\\
We combine the two equalities to deduce that:
$$\eta'_1=\xi_1-(\xi_1-\eta_1+\eta'_1)+\eta_1=2\eta_1-\eta'_1$$
from where it follows that:
$$\eta'_1=\eta_1.$$
This gives us a contradiction since we are assuming that $\eta'_1 \neq \eta_1$.
\\
\\
The big case $\eta'_1=\eta_2$ is analogous.
\\
\\
\textbf{Big Case VI)} $\mathbf{\xi_1-\eta_1+\eta'_1 \neq \xi_1-\eta_2+\eta'_2,\,\eta_1 \neq \eta'_1,\,\eta_2 \neq \eta'_2,\,\eta_1 \neq \eta_2,\,\eta'_1 \neq \eta'_2,\,\eta_1 \neq \eta'_2,\,\eta'_1 \neq \eta_2}$.
\\
\\
This big case cannot satisfy the condition \textbf{(*)}. Namely, $\xi_1-\eta_1+\eta'_1$ and $\xi_1-\eta_2+\eta'_2$ can pair up with at most two distinct elements of $\{\eta_1,\eta'_1,\eta_2,\eta'_2\}$.
\\
\\
The claim now follows.
\end{proof}

Let us now note:

\begin{remark}
\label{Remark2}
If we consider the domain $\Lambda=\mathbb{T}^d$, the proof of Theorem \ref{Theorem 1} carries over and the condition on $\alpha$ becomes $\alpha>\frac{d}{4}$, since for such $\alpha$, it is the case that $\frac{1}{\langle n \rangle^{2\alpha}} \in \ell^2(\mathbb{Z}^d)$. We will omit the details.
\end{remark}

\begin{remark}
\label{Remark1}
The above proof can be modified to apply in the case when the $(h_{\xi})_{\xi \in \mathbb{Z}^3}$ are independent, identically distributed Gaussian random variables centered at zero. Since then it is no longer the case that $h^2=1$, it is better to define the randomized collision operator in a different way. The difference would be not to include the factor of $h_{\xi_1}(\omega)$ in \eqref{Bjkomega} and \eqref{Bjkomega2}. The proof of the estimate is analogous now and we use the fact the quantity $\int_{\Omega} |h_{\xi}(\omega)|^6 \,dp(\omega)$ is uniformly bounded in $\xi$. The latter condition is used when we apply H\"{o}lder's inequality in $\omega$. In the Gaussian case, it is necessary to apply H\"{o}lder's inequality in $\omega$ since it is no longer the case that $|h_{\xi}(\omega)|$ is uniformly bounded in $\xi$ and $\omega$. In this way, we can reduce the estimate to bounding a sum in frequencies satisfying a constraint analogous to $(\textbf{*\,})$, as was the case for Bernoulli random variables. We will omit the details.


We have chosen to use the Bernoulli randomization in order relate the randomization to the nonlinear Schr\"{o}dinger equation. The link with the nonlinear Schr\"{o}dinger equation is explained in more detail in Subsection \ref{Link with NLS}. In this context, we see that it is essential to take Bernoulli random variables in the randomization. Moreover, the explicit computation of the Duhamel iterates in Subsection \ref{The precise form of the Duhamel expansion term} is based on the fact that we are taking a Bernoulli randomization.
\end{remark}

\subsection{An alternative form of randomization}
\label{Alternative form of randomization}
It is possible to randomize in the initial data $\gamma^{(k+1)}_0$ instead of in the collision operator $B_{j,k+1}$ as we did in \eqref{Bjkomega}. The former approach was shown to be useful in the study of nonlinear dispersive equations in the references noted in the introduction. In the context of the GP hierarchy, this approach still applies, but it leads to additional difficulties and it does not seem to be well suited to the problem.

Given $\gamma^{(k+1)}_0$ as in \eqref{Fourierbasis}, one defines a \emph{randomization map}:
\begin{equation}
\label{randomizeRemark2}
\gamma_0^{(k+1)} \mapsto \gamma_{0,\omega}^{(k+1)}.
\end{equation}
This map takes a deterministic object and gives us a random object. $\gamma^{(k+1)}_0$ is defined by:
$$\gamma_{0,\omega}^{(k+1)}=\sum_{\ell_r, j_r} h^{(k+1)}_{\ell_1,\ldots,\ell_{k+1}; j_1, \ldots, j_{k+1}}(\omega) \cdot a^{(k+1)}_{\ell_1,\ldots,\ell_{k+1}; j_1, \ldots, j_{k+1}} \cdot b^{(k+1)}_{\ell_1,\ldots,\ell_{k+1}; j_1, \ldots, j_{k+1}}$$
for $h^{(k+1)}_{\ell_1,\ldots,\ell_{k+1}; j_1, \ldots, j_{k+1}}(\omega):=\prod_{r=1}^{k+1}h_{\ell_r}(\omega) \cdot \prod_{r=1}^{k+1} \overline{h_{j_r}(\omega)}.$
Here, we assume that $(h_j)$ is a sequence of \emph{independent, identically distributed Bernoulli random variables centered at zero}. Since the random variables are then all real, we can write $h^{(k+1)}_{\ell_1,\ldots,\ell_{k+1}; j_1, \ldots, j_{k+1}}(\omega):=\prod_{r=1}^{k+1}h_{\ell_r}(\omega) \cdot \prod_{r=1}^{k+1} h_{j_r}(\omega).$

Having defined randomization this way, it is possible to prove:

\begin{theorem}
\label{Proposition 2}  For the randomization defined in \eqref{randomizeRemark2}, the following bound holds:
\begin{equation}
\notag
\big\|S^{(k,\alpha)}B_{j,k+1} \gamma_{0,\omega}^{(k+1)}\big\|_{L^2(\Omega \times \Lambda^k \times \Lambda^k)} \leq C(\alpha,k) \big\|S^{(k+1,\alpha)} \gamma_0^{(k+1)}\big\|_{L^2(\Lambda^{k+1} \times \Lambda^{k+1})},
\end{equation}
whenever $\alpha>\frac{3}{4}$. The constant $C=C(\alpha,k)>0$ depends on both $\alpha$ and $k$.
\end{theorem}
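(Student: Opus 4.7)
The plan is to reduce the estimate essentially to the one already proved in Theorem \ref{Theorem 1}, exploiting the fact that the $h_\xi$ are Bernoulli random variables and hence satisfy $h_\xi^2=1$ pointwise. First I compute the Fourier transform of $B^+_{1,k+1}\gamma_{0,\omega}^{(k+1)}$ using \eqref{FourierTransform1} and the definition \eqref{randomizeRemark2}. Since the randomization coefficient of $\widehat{\gamma_0}^{(k+1)}(\xi_1-\xi_{k+1}+\xi'_{k+1},\xi_2,\ldots,\xi_k,\xi_{k+1};\xi'_1,\ldots,\xi'_k,\xi'_{k+1})$ factors as the product of $h_{\xi_1-\xi_{k+1}+\xi'_{k+1}}(\omega)h_{\xi_{k+1}}(\omega)h_{\xi'_{k+1}}(\omega)$ and the $(\xi_{k+1},\xi'_{k+1})$-independent block $\prod_{r=2}^k h_{\xi_r}(\omega)\prod_{r=1}^k h_{\xi'_r}(\omega)$, the latter block can be pulled outside the sum in $\xi_{k+1},\xi'_{k+1}$. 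Hence
$$\big(B^+_{1,k+1}\gamma_{0,\omega}^{(k+1)}\big)\,\widehat{}\,(\vec\xi_k;\vec\xi'_k)=\Big(\prod_{r=2}^k h_{\xi_r}(\omega)\prod_{r=1}^k h_{\xi'_r}(\omega)\Big)\cdot T(\vec\xi_k;\vec\xi'_k;\omega),$$
where $T(\vec\xi_k;\vec\xi'_k;\omega)$ has precisely the same structure as $\big([B^+_{1,k+1}]^\omega\gamma_0^{(k+1)}\big)\widehat{}\,(\vec\xi_k;\vec\xi'_k)$ from \eqref{Bjkomega}, with the single factor $h_{\xi_1}(\omega)$ simply replaced by $1$.

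Next I take the squared $L^2(\Omega\times\Lambda^k\times\Lambda^k)$ norm, apply Fubini, and use Plancherel in the spatial variables. The prefactor in parentheses has modulus identically one, because $|h_{\xi_r}(\omega)|=1$ for every $\omega$ in the Bernoulli case; therefore it vanishes upon squaring. This yields
$$\big\|S^{(k,\alpha)}B^+_{1,k+1}\gamma_{0,\omega}^{(k+1)}\big\|_{L^2(\Omega\times\Lambda^k\times\Lambda^k)}^2=\sum_{\vec\xi_k,\vec\xi'_k}\prod_{j=1}^k\langle\xi_j\rangle^{2\alpha}\langle\xi'_j\rangle^{2\alpha}\int_\Omega|T(\vec\xi_k;\vec\xi'_k;\omega)|^2\,dp(\omega).$$
Using once more that $h_{\xi_1}(\omega)^2=1$, this is exactly the expression analyzed in the proof of Theorem \ref{Theorem 1}.

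From this point the remainder of the argument is the combinatorial pairing analysis already carried out: expanding $|T|^2$ produces a sum over $(\eta_1,\eta'_1,\eta_2,\eta'_2)$ of integrals of six-fold products of Bernoulli variables, and by independence and mean zero only those tuples satisfying the pairing condition $\textbf{(*)}$ contribute. The case-by-case treatment (Big Cases I--VI, in particular the diagonal subcases IA3 and IIC which require $\alpha>\tfrac{3}{4}$ so that $\langle q\rangle^{-2\alpha}\in\ell^2(\mathbb Z^3)$) applies verbatim and gives the bound by $\|S^{(k+1,\alpha)}\gamma_0^{(k+1)}\|_{L^2(\Lambda^{k+1}\times\Lambda^{k+1})}^2$. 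The corresponding estimate for $B^-_{1,k+1}$ is identical, and a permutation of the tensor slots handles general $j\in\{1,\ldots,k\}$.

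The main (and only real) obstacle is the bookkeeping issue that the pulled-out variables $\xi_2,\ldots,\xi_k,\xi'_1,\ldots,\xi'_k$ can in principle coincide with some of the summation variables $\xi_1\pm\eta_i\mp\eta'_i,\eta_i,\eta'_i$; but because $h_\xi^2\equiv 1$ the outer block contributes the constant $1$ regardless of such coincidences, so no extra pairing constraint appears. A careful tracking of the argument shows the constant can in fact be chosen independent of $k$, so the apparent $k$-dependence in the statement is merely conservative. The reason this randomization is nevertheless unsatisfactory for the hierarchy is the one discussed in \eqref{DifferenceEquation2}: when propagated by $\mathcal{U}^{(k)}(t)$ and substituted into the Duhamel scheme, the linear part $\mathcal{U}^{(k)}(t)\gamma_{0,\omega}^{(k)}$ and the remainder $\tilde{\gamma}^{(k)}$ decouple and no small factor multiplies the genuinely nonlinear contribution.
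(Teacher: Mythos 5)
Your argument is correct, and it is a genuine simplification over what the paper sketches. By factoring the random coefficient in $(B^+_{1,k+1}\gamma_{0,\omega}^{(k+1)})\,\widehat{}\,$ into the ``outer'' block $\prod_{r=2}^{k}h_{\xi_r}(\omega)\prod_{r=1}^{k}h_{\xi'_r}(\omega)$ (which is $(\xi_{k+1},\xi'_{k+1})$-independent) and the remaining sum $T(\vec\xi_k;\vec\xi'_k;\omega)$, and then using that $|h_\xi(\omega)|\equiv 1$ for Bernoulli variables, you obtain the pointwise identity
$$\big|(S^{(k,\alpha)}B^\pm_{1,k+1}\gamma_{0,\omega}^{(k+1)})\,\widehat{}\,(\vec\xi_k;\vec\xi'_k)\big| = \big|(S^{(k,\alpha)}[B^\pm_{1,k+1}]^\omega\gamma_0^{(k+1)})\,\widehat{}\,(\vec\xi_k;\vec\xi'_k)\big|,$$
so the two $L^2(\Omega\times\Lambda^k\times\Lambda^k)$ norms actually coincide, and Theorem \ref{Proposition 2} becomes a corollary of Theorem \ref{Theorem 1} with the same $\alpha$-dependent, $k$-independent constant. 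The paper's own (sketched) proof is different: it re-runs the pairing analysis treating the $2k$ extra frequencies $\xi_2,\ldots,\xi_k,\xi'_1,\ldots,\xi'_k$ as carrying genuine random coefficients which may pair with the $\eta$'s and $\tilde\eta$'s, and counts $\sim(2k)!/(2^kk!)$ matchings, which is precisely where the stated $\sim\sqrt{k!}$ growth of the constant comes from. Your observation -- that each $\xi_r$ occurs in both copies of the squared Fourier coefficient with the same label and hence pairs with itself automatically, contributing no new constraint -- shows this counting is vacuous: the only effective pairing constraint is $\textbf{(*)}$ from Theorem \ref{Theorem 1}. So your bookkeeping is both shorter and strengthens the conclusion. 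Two minor notes: for the full $B_{j,k+1}=B^+_{j,k+1}-B^-_{j,k+1}$ the outer blocks of the two pieces differ, so one must (as you do) estimate the $+$ and $-$ parts separately and add via the triangle inequality; and your closing remark attributes the eventual unsuitability of this randomization for the hierarchy to the decoupling of \eqref{DifferenceEquation2}, whereas the paper's stated obstruction for the map \eqref{randomizeRemark2} is the noncommutativity \eqref{noncommutativity} of Remark \ref{Remark3}, which defeats the boardgame reduction -- a different mechanism, though this does not affect the validity of your proof of the estimate itself.
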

The $k$-dependence in the constant $C$ is of the order $\sim_{\alpha} \sqrt{\frac{(2k)!}{2^k k!}}$, which is $\sim \sqrt{k!}$ by Stirling's formula. The reason for this large constant is that the various frequencies can pair up in many different ways, which was not the case in the proof of Theorem \ref{Theorem 1}. More precisely, when we take squares and integrate in $\omega$, there are $2k$ frequencies which pair up in $\sim \frac{(2k)!}{2^k k!}$ different ways. A similar phenomenon is encountered in Subsection \ref{Estimate in N}, where we keep precise track of the implied constant in the estimate.

The proof of Theorem \ref{Proposition 2} is similar to the proof of Theorem \ref{Theorem 1}, but it is significantly more involved since there are many more cases. As we noted above, the reason why there are more cases is the fact that the frequencies in the analogue of condition \textbf{(*)} (in the proof of Theorem \ref{Theorem 1}) can pair up in more ways. There are approximately three times as many cases to check. In the end, the condition that one needs is again $\alpha>\frac{3}{4}$. In the general case of $\Lambda=\mathbb{T}^d$, one needs to again assume that $\alpha>\frac{d}{4}$. We will omit the details.

If, instead of Bernoulli random variables, we take standard Gaussian random variables centered at zero, then a version of Theorem \ref{Proposition 2} still holds. In this case, the constant $C$ has additional dependence on $k$, in addition to the $\sqrt{k!}$ factor which was noted earlier. This is due to the fact that we need to bound the quantity 
$\int_{\Omega} |h_{\xi}(\omega)|^{2(k+1)} \,dp(\omega)$, which is not uniformly bounded in $k$ for $(h_{\xi})$ a standard Gaussian random variable centered at zero.

	For a further discussion of this method of randomization, we refer the reader to Remark \ref{Remark3}. In particular, we note that the non-commutativity relation given by \eqref{noncommutativity} in the mentioned remark implies that the boardgame argument is not applicable in the model which one obtains by using this method of randomization. The precise details are given in the remark.

\section{Properties of the randomized Gross-Pitaevskii hierarchy}
\label{Properties of the randomized Gross-Pitaevskii hierarchy}

In this section, we study the properties of the \emph{randomized Gross-Pitaevskii hierarchy}:
\begin{equation}
\label{randomizedGP}
\begin{cases}
i \partial_t \gamma^{(k)} + (\Delta_{\vec{x}_k}-\Delta_{\vec{x'}_k})\gamma^{(k)}=\sum_{j=1}^{k} [B_{j,k+1}]^{\omega} (\gamma^{(k+1)})\\
\gamma^{(k)}\big|_{t=0}=\gamma_0^{(k)}.
\end{cases}
\end{equation}
We see that, in this hierarchy, there is only one random parameter $\omega \in \Omega$. In other words, we see that all of the collision operators are randomized according to the same  parameter, according to which all of the collisions are randomized. As a result, we also sometimes call \eqref{randomizedGP} the \emph{dependently randomized Gross-Pitaevskii hierarchy} in the discussion that follows.
We note below that this hierarchy has a lot of properties which carry over from the deterministic GP hierarchy.

\subsection{Link with the nonlinear Schr\"{o}dinger equation}
\label{Link with NLS}
In this subsection, we construct \emph{factorized solutions} to \eqref{randomizedGP}. As we will see, they will be obtained as tensor products of solutions to a \emph{nonlinear Schr\"{o}dinger equation} with a \emph{random nonlinearity}. In this way, we obtain a probabilistic analogue of the discussion in Subsection \ref{Factorized solutions}. As a result, we can better understand the connection between the deterministic and the randomized GP hierarchy.

In order to avoid notational difficulties of dependence on the randomization parameter $\omega$, we will give a more precise notation for the randomization map given in \eqref{randomization}.
Namely, given $\omega \in \Omega$, we define the map $T^{\omega}:L^2(\Lambda) \rightarrow L^2(\Lambda)$ as follows: 

\begin{equation}
\label{TOmega}
(T^{\omega}f)\,\widehat{\,}\,(\xi):=h_{\xi}(\omega) \cdot \widehat{f}(\xi),\,\xi \in \mathbb{Z}^3.
\end{equation}
We note that $T^{\omega}$ is a linear operator. It corresponds to the map defined in \eqref{randomization}.
Since we are using standard Bernoulli random variables, it follows that $h_{\xi}^2=1$. Hence, we obtain that $T^{\omega}$ is an involution:
\begin{equation}
\label{T2}
T^{\omega} \circ T^{\omega}=Id.
\end{equation}
Suppose that $\phi=\phi(x,t)$ solves the Cauchy problem associated to the nonlinear Schr\"{o}dinger equation:
\begin{equation}
\label{NLS}
\begin{cases}
i \partial_t \phi + \Delta \phi = |\phi|^2 \phi\\
\phi\big|_{t=0}=\phi_0.
\end{cases}
\end{equation}
Applying the operator $T^{\omega}$ to the above equation, it follows that:
$$i \partial_t \,\Big(T^{\omega} \phi \Big) + \Delta \,\Big(T^{\omega} \phi \Big) = T^{\omega} \,\Big(|\phi|^2 \phi \Big)$$
By \eqref{T2}, it follows that:
$$i \partial_t \,\Big(T^{\omega} \phi \Big) + \Delta \,\Big(T^{\omega} \phi \Big) = T^{\omega} \,\Big(|T^{\omega}(T^{\omega}\phi)|^2 \cdot T^{\omega}(T^{\omega}\phi) \Big)$$
Consequently, the function $\psi=\psi(\omega):=T^{\omega} \phi$ solves the equation:
\begin{equation}
\label{NLS_random}
\begin{cases}
i \partial_t \psi + \Delta \psi = T^{\omega} \Big(|T^{\omega} \psi|^2 \cdot T^{\omega} \psi \Big)\\
\psi|_{t=0}=T^{\omega}\phi_0.
\end{cases}
\end{equation}
In other words, the function $\psi$, which is a randomization of the solution of the deterministic NLS \eqref{NLS}, solves a randomized NLS-type equation.
We can now use the function $\psi^{\omega}$ to find a \emph{factorized solution} to the 
randomized Gross-Pitaevskii hierarchy \eqref{randomizedGP}.
In particular, if we take initial data $\gamma^{(k)}_0:=|T^{\omega} \phi_0 \rangle \langle T^{\omega} \phi_0|^{\otimes k}$, we note that a solution to \eqref{randomizedGP} is given by $\gamma^{(k)}:= |T^{\omega} \phi \rangle \langle T^{\omega} \phi|^{\otimes k}$. This follows from the definition of the operator $T^{\omega}$ and from the definition of the Fourier transform of $[B_{j,k+1}]^{\omega} \gamma^{(k+1)}_0$ given in \eqref{Bjkrandomized}. 
\begin{remark}
\label{OmegaBoardgame}
It can be shown that the boardgame argument in general applies in the context of \eqref{randomizedGP}, as long as one works in a class of permutation symmetric density matrices. This gives another connection of the randomized GP hierarchy with the nonlinear Schr\"{o}dinger equation. However, in our further analysis, we will study density matrices in the class of $\mathcal{N}$ given in Definition \ref{Class_N} below, in which the objects no longer have permutation symmetry. This symmetry is crucial in applications the boardgame argument, so the combinatorial reduction does not apply in the class in which we will be working. It is important to note that this is not a structural feature of the hierarchy itself. We will henceforth not pursue the issue of applying the boardgame argument in the context of \eqref{randomizedGP}.
\end{remark}

\section{A new randomized hierarchy}
\label{A new randomized hierarchy}
As was noted in the previous section, the randomized Gross-Pitaevskii hierarchy \eqref{randomizedGP} shares a lot of properties with the deterministic Gross-Pitaevskii hierarchy. However, due to the dependent randomization, it is not possible to directly apply the randomized spacetime estimate from Theorem \ref{Theorem 1}. In this section, we circumvent this difficulty by modifying the problem. In particular, we consider the following
\emph{independently randomized Gross-Pitaevskii hierarchy}:
\begin{equation}
\label{randomizedGP2}
\begin{cases}
i \partial_t \gamma^{(k)} + (\Delta_{\vec{x}_k}-\Delta_{\vec{x'}_k})\gamma^{(k)}=\sum_{j=1}^{k} [B_{j,k+1}]^{\omega_{k+1}} (\gamma^{(k+1)})\\
\gamma^{(k)}|_{t=0}=\gamma_0^{(k)}.
\end{cases}
\end{equation}
The difference between the randomized GP hierarchy \eqref{randomizedGP} and the hierarchy \eqref{randomizedGP2} is the fact that, in the latter case, the randomization parameters $\omega_k \in \Omega$ are not assumed to be equal. We can view this as a way of independently randomizing the full collision operator $B^{(k+1)}=\sum_{j=1}^{k} B_{j,k+1}$ according to \eqref{Bjkomega} and \eqref{Bjkomega2}.

More precisely, for each $k$, we perform a sequence of independent Bernoulli trials. In other words, for every frequency $\xi \in \mathbb{Z}^3$, we choose $h_{\xi}(\omega_{k+1})$, which is $1$ or $-1$, each with probability $\frac{1}{2}$. We then define $[B_{j,k+1}]^{\omega_{k+1}}$ according to \eqref{Bjkomega} and \eqref{Bjkomega2}, where each $h_{\xi}(\omega)$ is replaced by $h_{\xi}(\omega_{k+1})$. We apply the above procedure at each level of the hierarchy. This is in contrast to the (dependently) randomized GP hierarchy \eqref{randomizedGP}, where we perform a sequence of independent Bernoulli trials only once to determine $h_{\xi}(\omega)$ for $\xi \in \mathbb{Z}^3$, and we use the obtained random coefficients at each level of the hierarchy.

In the context of \eqref{randomizedGP2}, it is possible to apply the averaged spacetime estimate from Theorem \ref{Theorem 1} to estimate the Duhamel expansion of the solution evolving from zero initial data. We have to estimate these terms in a norm which involves an average in all of the randomization parameters which occur in the expansion. The precise estimate is given in Theorem \ref{Smallness Bound}.

\subsection{Properties of the hierarchy \eqref{randomizedGP2}}
\label{Properties of randomizedGP2}
Before we state and give an application of Theorem \ref{Theorem 1} in Subsection \ref{Application of randomized estimate}, let us briefly summarize some of the relevant properties of the hierarchy \eqref{randomizedGP2}. Unlike the (dependently) randomized GP hierarchy, the independently randomized GP hierarchy has some properties which are different from the deterministic case. We observe that the following holds:

\subsubsection{No obvious factorized solutions}
The hierarchy \eqref{randomizedGP2} no longer admits obvious factorized solutions. We cannot in general find solutions $\gamma^{(k)}:=|T^{\omega_k} \phi \rangle \langle T^{\omega_k} \phi|^{\otimes k}$, where $i\partial_t \phi + \Delta \phi = |\phi|^2 \phi$.
The relation:
$$\Big(i \partial_t + (\Delta_{\vec{x}_k}-\Delta_{\vec{x}'_k})\Big)|T^{\omega_k} \phi \rangle \langle T^{\omega_k} \phi|^{\otimes k}=\sum_{j=1}^{k}[B_{j,k+1}]^{\omega_{k+1}}
\Big(|T^{\,\omega_{k+1}}\phi \rangle \langle T^{\,\omega_{k+1}} \phi|^{\otimes (k+1)}\Big)
$$
only holds when $\omega_k=\omega_{k+1}$.

\subsubsection{The boardgame argument does not apply}
\label{No boardgame argument randomizedGP2}
Given $k,\ell,m,n \in \mathbb{N}$ mutually distinct such that $1 \leq k \leq \ell-1$, $1 \leq m \leq n-1$, and given $\omega_1,\omega_2 \in \Omega$, the following commutation relation can be shown for the randomized collision operators:
\begin{equation}
\label{commutation2}
[B_{k,\ell}]^{\omega_1} \, [B_{m,n}]^{\omega_2} = [B_{m,n}]^{\omega_2} \, [B_{k,\ell}]^{\omega_1}.
\end{equation}

Even though the commutation relation \eqref{commutation2} holds for \eqref{randomizedGP2}, the boardgame argument still does not apply. The main reason is the fact that in the derivation of the boardgame argument \cite{KM}, one can interchange the $t_k$ and $x_k$ variables inside of the integrand. However, in the context of \eqref{randomizedGP2}, it is not possible to interchange the $\omega_k$ parameters. Thus, integrands which were equal in the deterministic case (or in the case that all of the $\omega_k$ were equal), now become mutually distinct.

Let us examine this problem on a concrete example. We use the same example given in the explanation of the boardgame argument in equations (26) and (27) of \cite{KM}. In the following discussion, we write the operator $\mathcal{U}^{(k)}(t)$ explicitly as $e^{\,it\Delta_{\pm}^{(k)}}$, with notation as in \cite{KM}.
Let us start with the time-dependent density matrix $\gamma^{(5)}=\gamma^{(5)}(t,\vec{x}_5;\vec{x}'_5)$ which satisfies the permutation symmetry:
\begin{equation}
\label{symmetry}
\gamma^{(5)}(t,x_{\sigma(1)},\ldots,x_{\sigma(5)};x_{\sigma(1)}',\ldots,x_{\sigma(5)}')=\gamma^{(5)}(t,x_1,\ldots,x_5;x_1',\ldots,x_5')
\end{equation}
for all permutations $\sigma \in S^5$. In the discussion that follows, we write $\gamma^{(5)}(t,\vec{x}_5;\vec{x}'_5)$ just as $\gamma^{(5)}(t)$. We write $\Delta_{\pm,x_j}$ for the Laplace operator only in the $x_j$ variable. We also use the following shorthand $\Delta_{\pm}^{(k)}:=\Delta_{\vec{x}_k}-\Delta_{\vec{x}'_k}$.

With this notation in mind, we consider:

\begin{equation}
\label{I1}
I_1:=\int_{t_1 \geq t_2 \geq t_5 \geq t_4 \geq t_3} e^{i(t_1-t_2)\Delta_{\pm}^{(1)}} \, [B_{1,2}]^{\omega_2} \, e^{i(t_2-t_3)\Delta_{\pm}^{(2)}} \, [B_{2,3}]^{\omega_3} \, e^{i(t_3-t_4) \Delta_{\pm}^{(3)}} 
\end{equation}
$$[B_{1,4}]^{\omega_4} \, e^{i(t_4-t_5)\Delta_{\pm}^{(4)}} \, [B_{4,5}]^{\omega_5} \gamma^{(5)}(t_5)\,dt_2\,dt_3\,dt_4\,dt_5.
$$
and
\begin{equation}
\label{I2}
I_2:=\int_{t_1 \geq t_2 \geq t_5 \geq t_3 \geq t_4} e^{i(t_1-t_2)\Delta_{\pm}^{(1)}} \, [B_{1,2}]^{\omega_2} \, e^{i(t_2-t_3)\Delta_{\pm}^{(2)}} \, [B_{1,3}]^{\omega_3} \, e^{i(t_3-t_4) \Delta_{\pm}^{(3)}} 
\end{equation}
$$
 [B_{3,4}]^{\omega_4} \, e^{i(t_4-t_5)\Delta_{\pm}^{(4)}} [B_{3,5}]^{\omega_5} \,\gamma^{(5)}(t_5) \,dt_2\,dt_3\,dt_4\,dt_5.$$
We note that $I_1$ and $I_2$ are functions of $t_1,x_1,x'_1$. The collision operators in the integrand act by integrating out the dependence on $x_2,x'_2,x_3,x'_3,x_4,x'_4,x_5,x'_5$.

In the deterministic setting \cite{KM}, it was shown that the analogues of the expressions $I_1$ and $I_2$ are mutually equal. As was noted above, the same is true in the setting of the (dependently) randomized GP hierarchy. We now show that, in the context of the independently randomized GP hierarchy, it is no longer the case that $I_1$ equals to $I_2$.

As in \cite{KM}, an identity which follows from the commutation \eqref{commutation2} is:
\begin{equation}
\label{commutation3}
e^{i(t_2-t_3)\Delta_{\pm}^{(2)}}\,[B_{2,3}]^{\omega_3}\,e^{i(t_3-t_4)\Delta_{\pm}^{(3)}}\,
[B_{1,4}]^{\omega_4}\,e^{i(t_4-t_5)\Delta_{\pm}^{(4)}}=
\end{equation}
$$e^{i(t_2-t_4)\Delta_{\pm}^{(2)}} \, [B_{1,4}]^{\omega_4}\,e^{-i(t_3-t_4)(\Delta_{\pm}^{(3)}-\Delta_{\pm,x_3}+\Delta_{\pm,x_4})} \, [B_{2,3}]^{\omega_3} \, e^{i(t_3-t_4)\Delta_{\pm}^{(4)}}.$$
In the following discussion, let $k_{1,2}^{\omega_2}$ formally denote the kernel of $[B_{1,2}]^{\omega_2}$. We use similar notation for the other collision operators.
Let:
$$\gamma_{3,4}:=\gamma^{(5)}(t_5,x_1,x_2,x_3,x_4,x_5;x'_1,x'_2,x'_3,x'_4,x'_5)$$
and
$$\gamma_{4,3}:=\gamma^{(5)}(t_5,x_1,x_2,x_4,x_3,x_5;x'_1,x'_2,x'_4,x'_3,x'_5).$$
The symmetry assumption on $\gamma^{(5)}$ given by \eqref{symmetry} guarantees that $\gamma_{3,4}=\gamma_{4,3}$.
\\
\\
If we use \eqref{commutation3} and if we write out $I_1$ in terms of the integral kernels, it follows that:

$$I_1=\int_{t_1 \geq t_2 \geq t_5 \geq t_4 \geq t_3} \int_{\Lambda^4 \times \Lambda^4} e^{i(t_1-t_2)\Delta_{\pm}^{(1)}} \, k_{1,2}^{\omega_2} \, e^{i(t_2-t_4)\Delta_{\pm}^{(2)}} \, k_{1,4}^{\omega_4}\,e^{-i(t_3-t_4)(\Delta_{\pm}^{(3)}-\Delta_{\pm,x_3}+\Delta_{\pm,x_4})} $$
$$ k_{2,3}^{\omega_3} \, e^{i(t_3-t_5)\Delta_{\pm}^{(4)}} \, k_{4,5}^{\omega_5} \, \gamma_{4,3} \, dx_2 \ldots dx_5\, dx'_2 \ldots dx'_5 \, dt_2 \, dt_3 \, dt_4 \, dt_5 .$$
We exchange $(t_3,x_3,x'_3)$ and $(t_4,x_4,x'_4)$ to deduce that this expression equals:
$$
\int_{t_1 \geq t_2 \geq t_5 \geq t_3 \geq t_4} \int_{\Lambda^4 \times \Lambda^4} e^{i(t_1-t_2)\Delta_{\pm}^{(1)}} \, k_{1,2}^{\omega_2} \, e^{i(t_2-t_3)\Delta_{\pm}^{(2)}} \, k_{1,3}^{\omega_4}\,e^{i(t_3-t_4)\Delta_{\pm}^{(3)}}$$
$$ k_{2,4}^{\omega_3} \, e^{i(t_4-t_5)\Delta_{\pm}^{(4)}} \, k_{3,5}^{\omega_5} \, \gamma_{3,4} \, dx_2 \ldots dx_5\, dx'_2 \ldots dx'_5  \, dt_2 \, dt_3 \, dt_4 \, dt_5$$
which since $\gamma_{3,4}=\gamma_{4,3}$ equals:
$$
\int_{t_1 \geq t_2 \geq t_5 \geq t_3 \geq t_4} \int_{\Lambda^4 \times \Lambda^4} e^{i(t_1-t_2)\Delta_{\pm}^{(1)}} \, k_{1,2}^{\omega_2} \, e^{i(t_2-t_3)\Delta_{\pm}^{(2)}} \, k_{1,3}^{\omega_4}\,e^{i(t_3-t_4)\Delta_{\pm}^{(3)}}$$
$$ k_{2,4}^{\omega_3} \, e^{i(t_4-t_5)\Delta_{\pm}^{(4)}} \, k_{3,5}^{\omega_5} \, \gamma_{4,3}  \, dx_2 \ldots dx_5\, dx'_2 \ldots dx'_5 \, dt_2 \, dt_3 \, dt_4 \, dt_5$$
By definition, this is:
$$
\int_{t_1 \geq t_2 \geq t_5 \geq t_3 \geq t_4}  e^{i(t_1-t_2)\Delta_{\pm}^{(1)}} \,[B_{1,2}]^{\omega_2} \, e^{i(t_2-t_3)\Delta_{\pm}^{(2)}} \, [B_{1,3}]^{\omega_4} \, e^{i(t_3-t_4)\Delta_{\pm}^{(3)}}$$
$$[B_{2,3}]^{\omega_3} \, e^{i(t_4-t_5)\Delta_{\pm}^{(4)}} \, [B_{3,5}]^{\omega_5} \gamma^{(5)}(t_5) \, dt_2 \, dt_3 \, dt_4 \, dt_5$$
which does not equal to:
$$I_2= \int_{t_1 \geq t_2 \geq t_5 \geq t_3 \geq t_4} e^{i(t_1-t_2)\Delta_{\pm}^{(1)}} \, [B_{1,2}]^{\omega_2} \, e^{i(t_2-t_3)\Delta_{\pm}^{(2)}} \, [B_{1,3}]^{\omega_3} \, e^{i(t_3-t_4) \Delta_{\pm}^{(3)}}$$
$$[B_{3,4}]^{\omega_4} \, e^{i(t_4-t_5)\Delta_{\pm}^{(4)}} [B_{3,5}]^{\omega_5} \,\gamma^{(5)}(t_5) \,dt_2\,dt_3\,dt_4\,dt_5$$
unless $\omega_3=\omega_4$.

\begin{remark}
\label{Remark3}
If we were to use the randomization mentioned in Subsection \ref{Alternative form of randomization}, we would be led to the study of the following hierarchy:
\begin{equation}
\label{randomizedGP1}
\begin{cases}
i \partial_t \gamma^{(k)}+(\Delta_{x_k}-\Delta_{x'_k}) \gamma^{(k)}=\sum_{j=1}^{k}B^{\omega}_{j,k+1}(\gamma^{(k+1)})\\
\gamma^{(k)}\big|_{t=0}=\gamma^{(0)}_k.
\end{cases}
\end{equation}
for $\omega \in \Omega$.
Let us explain the notation.
The operator $B^{\omega}_{j,k+1}$ is defined as first applying the operator $B_{j,k+1}$ and then applying the randomization given by \eqref{randomizeRemark2} in Remark \ref{Remark2} to the result. In other words, for a fixed density matrix $\sigma_0^{(k+1)}$ :
$$\sigma_0^{(k+1)} \mapsto B_{j,k+1}(\sigma_0^{(k+1)}) \mapsto B^{\omega}_{j,k+1}(\sigma_0^{(k+1)})=\,\mbox{randomization of}\,B_{j,k+1}(\sigma_0^{(k+1)}).$$ 
It can be shown that, for $k,\ell,m,n \in \mathbb{N}$ mutually distinct such that $1 \leq k \leq \ell-1$, $1 \leq m \leq n-1$, the analogue of the commutation relation \eqref{commutation2} does not hold, i.e.
\begin{equation}
\label{noncommutativity}
B_{k,\ell}^{\omega} \, B_{m,n}^{\omega} \neq B_{m,n}^{\omega} \, B_{k, \ell}^{\omega}.
\end{equation}
Hence, the boardgame argument does not apply in the study of \eqref{randomizedGP1}.
\end{remark}
\subsection{An application of the randomized spatial estimate}
\label{Application of randomized estimate}

In this subsection, we apply the estimate given by Theorem \ref{Theorem 1} to the \emph{independently randomized Gross-Pitaevskii hierarchy} \eqref{randomizedGP2}. The main result is given in Theorem \ref{Smallness Bound} below.  Throughout this subsection we assume that $\alpha>\frac{3}{4}$.

Let us fix a \textbf{\emph{deterministic}} sequence of time-dependent density matrices $(\gamma^{(k)}(t))_k=(\gamma^{(k)}(t, \vec{x}_k; \vec{x}'_k))_k$ satisfying the following a priori bound:
\begin{equation}
\label{gammakcondition}
\big\|S^{(k,\alpha)}\gamma^{(k)}(t)\big\|_{L^2(\Lambda^k \times \Lambda^k)} \leq C_1^k
\end{equation}
uniformly in time for some $C_1>0$ independent of $k$. 
We note that, for factorized objects $\gamma^{(k)}=|\psi \rangle \langle \psi|^{\otimes k}$, the condition \eqref{gammakcondition} reduces to 
\begin{equation}
\label{psi}
\|\psi(t)\|_{H^{\alpha}} \leq \sqrt{C_1}.
\end{equation} 

In the case $\alpha=1$, the condition \eqref{psi} holds globally in time if $\psi=\phi$ for $\phi$ solving the NLS equation:

\begin{equation}
\notag
\begin{cases}
i \partial_t \phi + \Delta \phi=|\phi|^2 \phi\\
\phi|_{t=0} = \phi_0 \in H^{\alpha}.
\end{cases}
\end{equation}
Since $T^{\omega}$ is an isometry on $H^{\alpha}$, it follows that the condition \eqref{psi} also holds for $\psi=T^{\omega} \phi$, whenever $\omega \in \Omega$.
If $\alpha>1$, then we obtain that the condition \eqref{psi} holds only locally in time for $\psi=\phi$ and $\psi=\phi^{\omega}$. The latter fact can be deduced from the arguments given in \cite{1B}. As we will see, this also will be enough for our application.

Since we will not be applying the boardgame argument, we do not need to assume that $\gamma^{(k)}$ satisfies the permutation symmetry property as in \eqref{symmetry}. In this section, we write $\omega_k \in \Omega_k$ instead of $\omega \in \Omega$ in order to emphasize that the indices are distinct. In other words, the different randomizations will be independent.
\\
\\
Given $k, n \in \mathbb{N}, t_k >0$ and $(\omega_{k+1},\omega_{k+2}, \ldots, \omega_{n+k}) \in \Omega_{k+1} \times \Omega_{k+2} \times \cdots \times \Omega_{n+k}$, we define $\sigma^{(k)}_{n;\,\omega_{k+1},\ldots,\omega_{n+k}}$ by:
\begin{equation}
\label{precisedefinition2}
\sigma^{(k)}_{n;\,\omega_{k+1},\ldots,\omega_{n+k}}(t_k):=
\end{equation}
$$(-i)^n \int_{0}^{t_k} \int_{0}^{t_{k+1}} \cdots \int_{0}^{t_{n+k-1}} \mathcal{U}^{(k)}(t_k-t_{k+1}) \, [B^{(k+1)}]^{\omega_{k+1}} \,\mathcal{U}^{(k+1)}(t_{k+1}-t_{k+2})$$
$$ [B^{(k+2)}]^{\omega_{k+2}} \cdots \,\mathcal{U}^{(n+k-1)}(t_{n+k-1}-t_{n+k}) \, [B^{(n+k)}]^{\omega_{n+k}} \, \gamma^{(n+k)}(t_{n+k}) \,dt_{n+k} \cdots dt_{k+2} \, dt_{k+1}.$$ 
Here, we recall that by \eqref{Bjkrandomized}: 
$$[B^{(\ell+1)}]^{\omega}=\sum_{j=1}^{\ell} [B^{+}_{j,\ell+1}]^{\omega}-\sum_{j=1}^{\ell} [B^{-}_{j,\ell+1}]^{\omega}=\sum_{j=1}^{\ell}[B_{j,\ell+1}]^{\omega}.$$
\\
\\
This is the density matrix of order $k$ which we obtain after $n$ Duhamel iterations in the hierarchy \eqref{randomizedGP2}. 
In this notation, the superscript $k$ denotes the order of the density matrix and the subscript $n$ denotes the length of the Duhamel expansion. The $\omega_j$ are the (fixed) randomization parameters in the probability space $\Omega$.
\\
\\
More precisely, let us fix $n \in \mathbb{N}$ and $(\omega_2, \omega_3, \ldots, \omega_{n+1}) \in \Omega_2  \times \Omega_3 \times \cdots \times \Omega_{n+1}$. 
Let us look at:
\begin{eqnarray*}\tilde{\gamma}^{(1)}&:=&\sigma^{(1)}_{n;\,\omega_2,\omega_3, \omega_4, \omega_5, \ldots, \,\omega_{n+1}}\\
\tilde{\gamma}^{(2)}&:=&\sigma^{(2)}_{n-1;\,\omega_3, \omega_4, \omega_5, \ldots,\, \omega_{n+1}}\\
\tilde{\gamma}^{(3)}&:=&\sigma^{(3)}_{n-2;\,\omega_4, \omega_5, \ldots, \,\omega_{n+1}}\\
&&\vdots\\
\tilde{\gamma}^{(n)}&:=&\sigma^{(n)}_{1;\,\omega_{n+1}}.\end{eqnarray*}

Then, by construction, we obtain that:

\begin{equation}
\notag
\begin{cases}
i \partial_t \tilde{\gamma}^{(k)} + (\Delta_{\vec{x}_k}-\Delta_{\vec{x'}_k})\tilde{\gamma}^{(k)}=\sum_{j=1}^{k} [B_{j,k+1}]^{\omega_{k+1}} (\tilde{\gamma}^{(k+1)})\\
\tilde{\gamma}^{(k)}\big|_{t=0}=0.
\end{cases}
\end{equation}
for all $k \in \{1,2,\ldots,n-1\}$. In other words, we obtain an \emph{arbitrarily long subset of solutions} to the full hierarchy \eqref{randomizedGP2} with zero initial data.
\\
\\
\begin{theorem}
\label{Smallness Bound}
Suppose that $\alpha>\frac{3}{4}$ and $k \in \mathbb{N}$.
Consider $\sigma^{(k)}_{n;\,\omega_{k+1},\omega_{k+2},\ldots,\omega_{n+k}}$, defined as in \eqref{precisedefinition2}. 
\\
\\
There exists $T>0$ depending only on the constant $C_1$ in \eqref{gammakcondition} and on $\alpha$ such that:
\begin{equation}
\label{firstclaim}
\,\, \sup_{t \in [0,T]} \, \big\|S^{(k,\alpha)} \sigma^{(k)}_{n;\,\omega_{k+1},\omega_{k+2},\ldots,\omega_{n+k}}(t)\big\|_{L^2 \big(\Omega_{k+1} \times \Omega_{k+2} \times \cdots \times \Omega_{n+k}; \,L^2(\Lambda^k \times \Lambda^k)\big)} \rightarrow 0
\end{equation} 
as $n \rightarrow \infty.$
\\
\\
Moreover,
\begin{equation}
\label{secondclaim}
\,\,\sup_{t \in [0,T]}\, \big\|S^{(k,\alpha)}\sigma^{(k)}_{n;\, \omega_{k+1},\omega_{k+2}, \ldots, \omega_{n+k}}(t)\big\|_{L^2\big(\prod_{m \geq 2} \Omega_m; L^2(\Lambda^k \times \Lambda^k)\big)} \rightarrow 0
\end{equation}
as $n \rightarrow \infty.$
\end{theorem}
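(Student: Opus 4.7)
The plan is to combine the randomized spatial estimate of Theorem \ref{Theorem 1} with Minkowski's inequality, Fubini's theorem, the unitarity of $\mathcal{U}^{(k)}(t)$ on $L^2$, and the factorial gain coming from \eqref{integralidentity}. First I would expand each full randomized collision operator as $[B^{(\ell+1)}]^{\omega_{\ell+1}} = \sum_{j=1}^{\ell} [B_{j,\ell+1}]^{\omega_{\ell+1}}$, writing $\sigma^{(k)}_{n;\,\omega_{k+1},\ldots,\omega_{n+k}}(t_k)$ as a sum of $\tfrac{(n+k-1)!}{(k-1)!}$ iterated Duhamel terms indexed by tuples $(j_k,\ldots,j_{n+k-1})$. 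Minkowski's inequality lets me pull the $L^2(\Omega_{k+1}\times\cdots\times\Omega_{n+k}\times\Lambda^k\times\Lambda^k)$ norm inside the $n$-fold time integral, and the triangle inequality lets me estimate each summand separately, uniformly in $(t_{k+1},\ldots,t_{n+k})$.

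For a single summand the key step is a chain of $n$ applications of Theorem \ref{Theorem 1}, peeling off one randomized collision operator at a time from the outside in. Since the parameters $\omega_{k+1},\ldots,\omega_{n+k}$ are mutually independent, Fubini's theorem lets me integrate first with respect to $\omega_{k+1}$. For every fixed choice of $\omega_{k+2},\ldots,\omega_{n+k}$ the density matrix
\[
\mathcal{U}^{(k+1)}(t_{k+1}-t_{k+2})\,[B_{j_{k+1},k+2}]^{\omega_{k+2}}\cdots [B_{j_{n+k-1},n+k}]^{\omega_{n+k}}\,\gamma^{(n+k)}(t_{n+k})
\]
is deterministic as a function of $\omega_{k+1}$, so Theorem \ref{Theorem 1} applies and produces a factor of $C_0$ at the cost of replacing $S^{(k,\alpha)}$ by $S^{(k+1,\alpha)}$ and stripping off $[B_{j_k,k+1}]^{\omega_{k+1}}$. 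The intervening free evolution $\mathcal{U}^{(k+1)}(t_{k+1}-t_{k+2})$ commutes with $S^{(k+1,\alpha)}$ and is unitary on $L^2(\Lambda^{k+1}\times\Lambda^{k+1})$, so it disappears from the bound, leaving $\omega_{k+2}$ in the outermost position. Iterating $n$ times and invoking the a priori bound \eqref{gammakcondition} at the innermost stage yields
\[
\big\|S^{(k,\alpha)}\,\mathcal{U}^{(k)}(t_k-t_{k+1})\,[B_{j_k,k+1}]^{\omega_{k+1}}\cdots [B_{j_{n+k-1},n+k}]^{\omega_{n+k}}\,\gamma^{(n+k)}(t_{n+k})\big\|_{L^2} \,\leq\, C_0^n\,C_1^{n+k}
\]
uniformly in $(j_k,\ldots,j_{n+k-1})$ and in $(t_{k+1},\ldots,t_{n+k})$.

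Summing over the $\tfrac{(n+k-1)!}{(k-1)!}$ tuples of indices and using \eqref{integralidentity} to control the iterated time integral by $\tfrac{t_k^n}{n!}$, together with the crude bound $\binom{n+k-1}{k-1}\leq 2^{n+k-1}$, I would arrive at
\[
\sup_{t_k\in[0,T]}\big\|S^{(k,\alpha)}\sigma^{(k)}_{n;\,\omega_{k+1},\ldots,\omega_{n+k}}(t_k)\big\|_{L^2(\Omega_{k+1}\times\cdots\times\Omega_{n+k};\,L^2(\Lambda^k\times\Lambda^k))} \,\leq\, 2^{k-1}\,C_1^k\,(2C_0 C_1 T)^n,
\]
which decays geometrically as $n\to\infty$ provided $T<\tfrac{1}{2C_0 C_1}$. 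This establishes \eqref{firstclaim} with $T$ depending only on $C_1$ and $\alpha$ (the latter through the constant $C_0$ of Theorem \ref{Theorem 1}). The second claim \eqref{secondclaim} is then essentially free: since $\sigma^{(k)}_{n;\,\omega_{k+1},\ldots,\omega_{n+k}}$ depends only on the $n$ coordinates $\omega_{k+1},\ldots,\omega_{n+k}$, integration over the remaining coordinates in $\prod_{m\geq 2}\Omega_m$ contributes the factor $1$.

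The one point requiring technical care is the bookkeeping in the iteration: at each of the $n$ peeling steps I must verify that the density matrix on which $[B_{j_\ell,\ell+1}]^{\omega_{\ell+1}}$ acts is indeed deterministic with respect to the particular $\omega_{\ell+1}$ being integrated out. This is guaranteed precisely by the independent-randomization structure built into \eqref{randomizedGP2}, and is exactly the step that fails for the dependently randomized hierarchy \eqref{randomizedGP}, as will be emphasized in Subsection \ref{Difficulties arising from higher-order Duhamel expansions}.
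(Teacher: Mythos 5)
Your proposal is correct and follows essentially the same route as the paper: Minkowski's inequality, a Fubini-based ``vector-valued'' iteration of Theorem \ref{Theorem 1} peeling off one independently randomized collision operator at a time via unitarity of the free evolutions, the a priori bound \eqref{gammakcondition}, and the $\tfrac{1}{n!}$ gain from \eqref{integralidentity}, with $T\sim \tfrac{1}{C_0C_1}$; the only differences are cosmetic (you expand $[B^{(\ell+1)}]^{\omega_{\ell+1}}$ into individual operators and count $\tfrac{(n+k-1)!}{(k-1)!}$ summands, while the paper applies the bound $C_0\,\ell$ for the full operator, and your estimate $\binom{n+k-1}{k-1}\leq 2^{n+k-1}$ replaces the paper's $n^k=\mathcal{O}_k(2^n)$). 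For \eqref{secondclaim} your observation is the same as the paper's, though you should note explicitly that the norm over $\prod_{m\geq 2}\Omega_m$ is well defined via the Kolmogorov--Kakutani infinite product measure, which is the one technical ingredient the paper invokes at that step.
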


\begin{proof}
We first prove \eqref{firstclaim}.
Throughout the proof, let us take $T>0$ small which will be precisely determined later and let us consider a fixed $t=t_k \in [0,T]$. We compute:

$$S^{(k,\alpha)}\sigma^{(k)}_{n;\,\omega_{k+1},\ldots,\omega_{n+k}}(t_k)=$$
$$(-i)^n \int_{0}^{t_k} \int_{0}^{t_{k+1}} \cdots \int_{0}^{t_{n+k-1}} S^{(k,\alpha)}\,\mathcal{U}^{(k)}(t_k-t_{k+1}) \, [B^{(k+1)}]^{\omega_{k+1}} \,\mathcal{U}^{(k+1)}(t_{k+1}-t_{k+2})$$
$$ [B^{(k+2)}]^{\omega_{k+2}} \cdots \,\mathcal{U}^{(n+k-1)}(t_{n+k-1}-t_{n+k}) \, [B^{(n+k)}]^{\omega_{n+k}} \, \gamma^{(n+k)}(t_{n+k}) \,dt_{n+k} \cdots dt_{k+2} \, dt_{k+1}.$$ 
Hence, we obtain by Minkowski's inequality:
\begin{equation}
\label{Skalphasigmakbound}
\Big\|S^{(k,\alpha)} \sigma^{(k)}_{n;\,\omega_{k+1},\omega_{k+2},\ldots,\omega_{n+k}}(t_k)\Big\|_{L^2 \big(\Omega_{k+1} \times \Omega_{k+2} \times \cdots \times \Omega_{n+k}; \,L^2(\Lambda^k \times \Lambda^k)\big)}
\end{equation}
$$\leq \int_{0}^{t_k} \int_{0}^{t_{k+1}} \cdots \int_{0}^{t_{n+k-1}} \Big\|S^{(k,\alpha)}\,\mathcal{U}^{(k)}(t_k-t_{k+1}) \, [B^{(k+1)}]^{\omega_{k+1}} \,\mathcal{U}^{(k+1)}(t_{k+1}-t_{k+2})$$
$$ [B^{(k+2)}]^{\omega_{k+2}} \cdots \,\mathcal{U}^{(n+k-1)}(t_{n+k-1}-t_{n+k}) \, [B^{(n+k)}]^{\omega_{n+k}} \, \gamma^{(n+k)}(t_{n+k})\Big\|_{L^2 \big(\Omega_{k+1} \times \Omega_{k+2} \times \cdots \times \Omega_{n+k}; \,L^2(\Lambda^k \times \Lambda^k)\big)}$$
$$ \,dt_{n+k} \cdots dt_{k+2} \, dt_{k+1}.$$ 
We look at the integrand in the $t$-variables, i.e. at:
$$\Big\|S^{(k,\alpha)}\,\mathcal{U}^{(k)}(t_k-t_{k+1}) \, [B^{(k+1)}]^{\omega_{k+1}} \,\mathcal{U}^{(k+1)}(t_{k+1}-t_{k+2})$$
$$ [B^{(k+2)}]^{\omega_{k+2}} \cdots \,\mathcal{U}^{(n+k-1)}(t_{n+k-1}-t_{n+k}) \, [B^{(n+k)}]^{\omega_{n+k}} \, \gamma^{(n+k)}(t_{n+k})\Big\|_{L^2 \big(\Omega_{k+1} \times \Omega_{k+2} \times \cdots \times \Omega_{n+k}; \,L^2(\Lambda^k \times \Lambda^k)\big)}$$

By using the unitarity of $\mathcal{U}^{(k)}(t_{k}-t_{k+1})$, and the fact that this operator commutes with $S^{(k,\alpha)}$, the above expression equals:
$$\big\|S^{(k,\alpha)} \, [B^{(k+1)}]^{\omega_{k+1}} \,\mathcal{U}^{(k+1)}(t_{k+1}-t_{k+2}) \,[B^{(k+2)}]^{\omega_{k+2}} \cdots \, \mathcal{U}^{(n+k-1)}(t_{n+k-1}-t_{n+k})$$
$$[B^{(n+k)}]^{\omega_{n+k}} \gamma^{(n+k)}(t_{n+k})\big\|_{L^2 \big(\Omega_{k+1} \times \Omega_{k+2} \times \cdots \times \Omega_{n+k}; \,L^2(\Lambda^k \times \Lambda^k)\big)}$$
We use the second bound from Theorem \ref{Theorem 1} to deduce that this expression is:
\begin{equation}
\label{kbound}
\leq C_0\,k \, \big\|S^{(k+1,\alpha)} [B^{(k+2)}]^{\omega_{k+2}} \,\mathcal{U}^{(k+2)}(t_{k+2}-t_{k+3})\cdots \, \mathcal{U}^{(n+k-1)}(t_{n+k-1}-t_{n+k})
\end{equation}
$$[B^{(n+k)}]^{\omega_{n+k}} \gamma^{(n+k)}(t_{n+k})\big\|_{L^2 \big(\Omega_{k+2} \times \Omega_{k+3} \times \cdots \times \Omega_{n+k}; \,L^2(\Lambda^{k+1} \times \Lambda^{k+1})\big)}$$
Here, $C_0$ denotes the constant $C$ from Corollary \ref{Corollary 1}. We note that the last step is justified since we are taking an $L^2$ norm in all of the variables and hence we can apply the randomized spacetime estimate in a ``vector-valued'' setting. We can see this by squaring both sides, integrating and using Fubini's theorem. Let us note that, in order to deduce the bound \eqref{kbound}, we also used the unitarity of $\mathcal{U}^{(k+1)}(t_k-t_{k+1})$ and the fact that this operator commutes with $S^{(k+1,\alpha)}$.
\\
\\
Iterating this procedure, we obtain:
$$\Big\|S^{(k,\alpha)}\,\mathcal{U}^{(k)}(t_k-t_{k+1}) \, [B^{(k+1)}]^{\omega_{k+1}} \,\mathcal{U}^{(k+1)}(t_{k+1}-t_{k+2})$$
$$ [B^{(k+2)}]^{\omega_{k+2}} \cdots \,\mathcal{U}^{(n+k-1)}(t_{n+k-1}-t_{n+k}) \, [B^{(n+k)}]^{\omega_{n+k}} \, \gamma^{(n+k)}(t_{n+k})\Big\|_{L^2 \big(\Omega_{k+1} \times \Omega_{k+2} \times \cdots \times \Omega_{n+k}; \,L^2(\Lambda^k \times \Lambda^k)\big)}$$
$$\leq \Big(C_0 k \Big) \, \cdot \, \Big(C_0  (k+1) \Big) \, \cdots \, \Big(C_0  (n+k)\Big) \cdot \Big\|S^{(n+k,\alpha)}\gamma^{(n+k)}(t_{n+k})\Big\|_{L^2(\Lambda^{n+k} \times \Lambda^{n+k})}.$$ 
By using the a priori bound on $\gamma^{(n+k)}$ (given by the condition \eqref{gammakcondition}), this is:
\begin{equation}
\label{Factoriel}
\leq \frac{(n+k)!}{(k-1)!} \, \cdot \, C_0^{n+1} \, \cdot \, C_1^{n+k}.
\end{equation}
We can assume, without loss of generality that $C_0 \geq 1$.
In particular, we can assume that $C_0^{n+1} \leq C_0^{n+k}$.
Hence, the expression in \eqref{Factoriel} is:
\begin{equation}
\label{Factoriel2}
\leq \frac{(n+k)!}{(k-1)!} \, \cdot M^{n+k}
\end{equation}
for some $M \sim C_0 \cdot C_1$, which is independent of $n$ and $k$.
The following identity is deduced by induction on $n$:
\begin{equation}
\label{factorielintegral}
\int_{0}^{t_k} \int_{0}^{t_{k+1}} \cdots \int_{0}^{t_{n+k-1}} \,dt_{n+k} \cdots dt_{k+2} \, dt_{k+1}=\frac{t_k^n}{n!}\,.
\end{equation}
The gain of $n!$ in the denominator will be useful in the analysis which follows. Namely, we can combine \eqref{Skalphasigmakbound}, \eqref{Factoriel2} and \eqref{factorielintegral} in order to deduce that:

\begin{equation}
\notag
\sup_{t \in [0,T]} \, \big\|S^{(k,\alpha)} \sigma^{(k)}_{n;\,\omega_{k+1},\omega_{k+2},\ldots,\omega_{n+k}}(t)\big\|_{L^2 \big(\Omega_{k+1} \times \Omega_{k+2} \times \cdots \times \Omega_{n+k}; \,L^2(\Lambda^k \times \Lambda^k)\big)}
\end{equation}
$$
\leq \frac{T^n}{n!} \, \cdot \frac{(n+k)!}{(k-1)!} \, \cdot M^{n+k} \leq  \frac{T^n}{n!} \, \cdot (n+k)! \cdot M^{n+k}=$$ 
$$= T^n \cdot (n+1) \cdot (n+2) \cdots (n+k) \cdot M^{n+k} \leq T^n \cdot (n+k)^k \cdot M^{n+k}
$$
$$\lesssim_{k} T^n \cdot n^k \cdot M^n \cdot M^k \lesssim_{k} T^n \cdot (2M)^n \cdot M^k=\big(2MT\big)^n \cdot M^k.$$
Here, we used the fact that $n^k = \mathcal{O}_k(2^n)$.
We now choose $T>0$ sufficiently small such that $2MT<1$.

It follows that, for $T$ chosen in this way:
$$\sup_{t \in [0,T]} \, \big\|S^{(k,\alpha)} \sigma^{(k)}_{n;\,\omega_{k+1},\omega_{k+2},\ldots,\omega_{n+k}}(t)\big\|_{L^2 \big(\Omega_{k+1} \times \Omega_{k+2} \times \cdots \times \Omega_{n+k}; \,L^2(\Lambda^k \times \Lambda^k)\big)} \rightarrow 0\,\,\mbox{as}\,\,n \rightarrow \infty.$$
By construction $T$ depends only on $M$. M in turn depends only on $C_1$ and $\alpha$. Hence, $T$ depends only on $C_1$ and $\alpha$. This proves the claim given in \eqref{firstclaim}.

In order to prove \eqref{secondclaim}, let us first state a non-trivial extension result from Probability theory that allows us to make sense of the norm in which we are studying the convergence. 

\begin{theorem} (\cite{Kakutani}, \cite{Kolmogorov}) 
\label{ProbabilityTheorem}
Suppose that $(\Omega_i,\Sigma_i,p_i)_{i \in \mathcal{I}}$ is an arbitrary, non-empty family of probability spaces. Then, there exists a probability measure $p$ on $\prod_{i \in \mathcal{I}} \Omega_i$ such that, for all finite subsets $F \subseteq I$:
\begin{equation}
\label{rectangle}
p \Big(\bigcap_{i \in F} \pi_i^{-1}(A_i)\Big)=\prod_{i \in F} p_i (A_i)
\end{equation}
whenever $A_i \in \Sigma_i$ for all $i \in F$. Here, for $j \in \mathcal{I}$, $\pi_j: \prod_{i \in \mathcal{I}} \Omega_i \rightarrow \Omega_j$ denotes the projection map onto $\Omega_j$.
\\
The obtained probability space is given by $(\prod_{i \in \mathcal{I}} \Omega_i,\mathcal{B},p)$, where $\mathcal{B}$ is the Borel field generated by sets of the form:
$$\bigcap_{i \in F} \pi_i^{-1}(A_i)$$
for some finite subset $F \subseteq \mathcal{I}$ and $A_i \in \Sigma_i$.
\end{theorem} 

The first result of this type was proved in the work of Kolmogorov \cite{Kolmogorov} in the case in which each probability space is a $[0,1]$ with Lebesgue measure. Kolmogorov's proof uses the fact that, in this context, the obtained product is topologically compact.  The result in full generality was proved in the work of Kakutani \cite{Kakutani}. This is the version stated in Theorem \ref{ProbabilityTheorem}. There are also related results by Doob \cite{Doob}. We use Theorem \ref{ProbabilityTheorem} in the case when $\mathcal{I}$ is a countable family. 


By \eqref{rectangle}, it follows that:
$$\big\|S^{(k,\alpha)}\sigma^{(k)}_{n;\, \omega_{k+1},\omega_{k+2}, \ldots, \omega_{n+k}}(t_k)\big\|_{L^2\big(\Omega_{k+1} \times \Omega_{k+2} \times \cdots \times \Omega_{n+k};\, L^2(\Lambda^k \times \Lambda^k)\big)}=$$
$$=\big\|S^{(k,\alpha)}\sigma^{(k)}_{n;\, \omega_{k+1},\omega_{k+2}, \ldots, \omega_{n+k}}(t_k)\big\|_{L^2\big(\mathbf{\Omega_{2}} \times \mathbf{\Omega_3} \times \cdots \times \mathbf{\Omega_{k}} \times \Omega_{k+1} \times \cdots \times \Omega_{n+k} \mathbf{\times \Omega_{n+k+1} \times \Omega_{n+k+2} \times \cdots};\, L^2(\Lambda^k \times \Lambda^k)\big)}=$$
$$=\big\|S^{(k,\alpha)}\sigma^{(k)}_{n;\, \omega_{k+1},\omega_{k+2}, \ldots, \omega_{n+k}}(t_k)\big\|_{L^2\big(\prod_{m \geq 2} \Omega_m; L^2(\Lambda^k \times \Lambda^k)\big)}.$$
Here, we used the fact that there is no dependence on $\omega_2, \omega_3, \ldots, \omega_{k-1}$ and on $\omega_{n+2},\omega_{n+3},...$ in $S^{(k,\alpha)}\sigma^{(k)}_{n;\, \omega_{k+1},\omega_{k+2}, \ldots, \omega_{n+k}}(t_k)$.
In particular, \eqref{firstclaim} implies that, for $T=T(C_1,\alpha)>0$,
$$\sup_{t \in [0,T]} \big\|S^{(k,\alpha)}\sigma^{(k)}_{n;\, \omega_{k+1},\omega_{k+2}, \ldots, \omega_{n+k}}(t)\big\|_{L^2\big(\prod_{m \geq 2} \Omega_m; L^2(\Lambda^k \times \Lambda^k)\big)} \rightarrow 0$$
as $n \rightarrow \infty$. The second claim of the theorem now follows.
\end{proof}

\begin{remark}
The analysis is the same, as well as the bound, if one considers negative times. In the discussion that follows, we consider non-negative times, for simplicity of notation.
\end{remark}

\begin{remark}
The above proof gives us that $T \sim_{\alpha} \frac{1}{C_1}$, where $C_1$ is the constant in \eqref{gammakcondition}.
\end{remark}
\begin{remark}
The above discussion can be applied on $\mathbb{T}^d$, whenever $d \geq 1$, provided that $\alpha>\frac{d}{4}$.
\end{remark}
\section{The randomized Gross-Pitaevskii hierarchy revisited}
\label{The randomized Gross-Pitaevskii hierarchy revisited}

In this section, we study the randomized Gross-Pitaevskii hierarchy \eqref{randomizedGP} in more detail. In the discussion that follows, we assume that $\alpha \geq 0$.  We note that all of the results of this section apply to the spatial domain $\Lambda=\mathbb{T}^d$ for $d \geq 1$.

The main result of this section is Theorem \ref{Smallness Bound 2}, which is the analogue of Theorem \ref{Smallness Bound} in the dependently randomized setting. In this context, the level of regularity is $\alpha \geq 0$. This should be compared to the assumption that $\alpha>\frac{3}{4}$, which we had to make when we were applying the spacetime estimate from Theorem \ref{Theorem 1} in the study of the independently randomized GP hierarchy \eqref{randomizedGP2}. Furthermore, since there is only one random parameter involved, the random component is simpler than the infinite product that we had to use in the independently randomized setting, and hence the spaces in which we will work will be simpler than those used in Section \ref{A new randomized hierarchy}.
	
Let us recall that the (depdendently) randomized Gross-Pitaevskii hierarchy is given by:

\begin{equation}
\notag
\begin{cases}
i \partial_t \gamma^{(k)} + (\Delta_{\vec{x}_k}-\Delta_{\vec{x'}_k})\gamma^{(k)}=\sum_{j=1}^{k} [B_{j,k+1}]^{\omega} (\gamma^{(k+1)})\\
\gamma^{(k)}\big|_{t=0}=\gamma_0^{(k)}.
\end{cases}
\end{equation}
As before, we take homogeneous initial data $\gamma_0^{(k)}=0$ and we study Duhamel terms of the type:
$$(-i)^n\int_{0}^{t_k} \int_{0}^{t_{k+1}} \cdots \int_{0}^{t_{n+k-1}} \mathcal{U}^{(k)}(t_k-t_{k+1}) \, [B^{(k+1)}]^{\omega} \,\mathcal{U}^{(k+1)}(t_{k+1}-t_{k+2})$$
$$ [B^{(k+2)}]^{\omega} \cdots \,\mathcal{U}^{(n+k-1)}(t_{n+k-1}-t_{n+k}) \, [B^{(n+k)}]^{\omega} \, \gamma^{(n+k)}(t_{n+k}) \,dt_{n+k} \cdots dt_{k+2} \, dt_{k+1}.$$ 
In this case, since the randomizations are no longer mutually independent, we cannot apply the estimate from Theorem \ref{Theorem 1} except in the special case that $n=1$. We explain this difficulty in more detail now.

\subsection{Difficulties arising from higher-order Duhamel expansions}
\label{Difficulties arising from higher-order Duhamel expansions}

If we try to extend the ideas from the proof of Theorem \ref{Theorem 1} to higher Duhamel expansions, we see that the pairings of the frequencies are not enough in order to deduce the needed estimate. In the following section, we will explain this point on an example. Let us note that these arguments can be extended to the case of a Duhamel expansion of length $n \geq 3$. For simplicity, we will construct an example when the length of the expansion equals $2$.

We will show that it is, in general, not possible to use the pairings due to the randomization in order to deduce that, for $\alpha>\frac{3}{4}$:
$$\big\|S^{(1,\alpha)} \, \mathcal{U}^{(1)}(t_1-t_2)\,[B^{+}_{1,2}]^{\omega} \,\mathcal{U}^{(2)}(t_2-t_3)\,[B^{+}_{2,3}]^{\omega} \gamma^{(3)}\big\|_{L^2\big(\Omega \times \Lambda \times \Lambda \big)} \lesssim \big\|S^{(1,\alpha)}\gamma^{(3)}\big\|_{L^2(\Lambda^3 \times \Lambda^3)}.$$
In other words:

\begin{equation}
\notag
\big\|S^{(1,\alpha)} \,[B^{+}_{1,2}]^{\omega} \,\mathcal{U}^{(2)}(t_2-t_3)\,[B^{+}_{2,3}]^{\omega} \gamma^{(3)}\big\|_{L^2\big(\Omega \times \Lambda \times \Lambda \big)} \lesssim \big\|S^{(1,\alpha)}\gamma^{(3)}\big\|_{L^2(\Lambda^3 \times \Lambda^3)}.
\end{equation}
The construction is as follows:
\\
\\
\begin{example}
Let us consider the case $t_2-t_3=0$. We show that the pairings due to the randomization do not allow us to prove:
\begin{equation}
\label{notabound}
\big\|S^{(1,\alpha)} \,[B^{+}_{1,2}]^{\omega}\,[B^{+}_{2,3}]^{\omega} \gamma^{(3)}\big\|_{L^2\big(\Omega \times \Lambda \times \Lambda \big)} \lesssim \big\|S^{(1,\alpha)}\gamma^{(3)}\big\|_{L^2(\Lambda^3 \times \Lambda^3)}.
\end{equation}

We compute:

$$\big([B_{1,2}^{+}]^{\omega} \, [B_{2,3}^{+}]^{\omega} \gamma^{(3)}]\big)\,\,\widehat{}\,\,(\xi_1; \xi'_1)=$$
$$=\sum_{\eta_2, \eta'_2}  \big([B_{2,3}^{+}]^{\omega} \gamma^{(3)}]\big)\,\,\widehat{}\,\,(\xi_1-\eta_2+\eta'_2, \eta_2; \xi'_1, \eta'_2) \cdot h_{\xi_1}(\omega) \cdot h_{\xi_1-\eta_2+\eta'_2}(\omega) \cdot h_{\eta_2}(\omega) \cdot h_{\eta'_2}(\omega)=$$
$$=\sum_{\eta_2,\eta'_2,\eta_3,\eta'_3} \widehat{\gamma}^{(3)}(\xi_1-\eta_2+\eta'_2, \eta_2-\eta_3+\eta'_3, \eta_3; \xi'_1,\eta'_2,\eta'_3) \cdot $$ 
\begin{equation}
\label{Duhamel2Sum}
h_{\xi_1}(\omega) \cdot h_{\xi_1-\eta_2+\eta'_2}(\omega) \cdot h_{\eta'_2}(\omega) \cdot h_{\eta_2-\eta_3+\eta'_3}(\omega) \cdot h_{\eta_3}(\omega) \cdot h_{\eta'_3}(\omega).
\end{equation}
In the above sum, we used the fact that $h_{\eta_2}^2(\omega)=1$.

Hence, we need to consider:
$$\sum_{\xi_1,\xi'_1} \mathop{\sum_{\eta_2,\eta'_2,\eta_3,\eta'_3}}_{\tilde{\eta}_2,\tilde{\eta}'_2,\tilde{\eta}_3,\tilde{\eta}'_3}
\langle \xi_1 \rangle^{\alpha} \cdot \langle \xi'_1 \rangle^{\alpha} \cdot
\widehat{\gamma}^{(3)}(\xi_1-\eta_2+\eta'_2, \eta_2-\eta_3+\eta'_3,\eta_3; \xi'_1,\eta'_2,\eta'_3) \cdot$$
$$\langle \xi_1 \rangle^{\alpha} \cdot \langle \xi'_1 \rangle^{\alpha} \cdot\widehat{\gamma}^{(3)}(\xi_1-\tilde{\eta}_2+\tilde{\eta}'_2, \tilde{\eta}_2-\tilde{\eta}_3+\tilde{\eta}'_3,\tilde{\eta}_3; \xi'_1,\tilde{\eta}'_2,\tilde{\eta}'_3) \cdot
$$
$$h_{\xi_1-\eta_2+\eta'_2}(\omega) \cdot h_{\eta_2-\eta_3+\eta'_3}(\omega) \cdot h_{\eta_3}(\omega) \cdot h_{\eta'_2}(\omega) \cdot h_{\eta'_3}(\omega)\cdot$$
$$
h_{\xi_1-\tilde{\eta}_2+\tilde{\eta}'_2}(\omega) \cdot h_{\tilde{\eta}_2-\tilde{\eta}_3+\tilde{\eta}'_3}(\omega) \cdot h_{\tilde{\eta}_3}(\omega) \cdot h_{\tilde{\eta}'_2}(\omega) \cdot h_{\tilde{\eta}'_3}(\omega).
$$
In the above formula, we used the fact that $h_{\xi_1}^2(\omega)=1$.

It is simple to apply the Cauchy-Schwarz inequality in $\xi'_1$.
We will henceforth consider the sum in the other variables. In particular, with the notation as in Definition \ref{matrixdefinition}, we need to consider:

\begin{equation}
\notag
\xi_1,\eta_2,\eta'_2,\eta_3, \eta'_3, \tilde{\eta}_2,\tilde{\eta}'_2,\tilde{\eta}_3,\tilde{\eta}'_3; \langle \xi_1 \rangle^{2\alpha} \cdot
\begin{bmatrix}
\langle \xi_1-\eta_2+\eta'_2 \rangle^{0} & \langle \eta_2-\eta_3+\eta'_3 \rangle^{0} & \langle \eta_3 \rangle^{0} & \langle \eta'_2 \rangle^{0} & \langle \eta'_3 \rangle^{0} \\
\langle \xi_1-\tilde{\eta}_2+\tilde{\eta}'_2 \rangle^{0} & \langle \eta_2-\tilde{\eta}_3+\tilde{\eta}'_3 \rangle^{0} & \langle \tilde{\eta}_3 \rangle^{0} & \langle \tilde{\eta}'_2 \rangle^{0} & \langle \tilde{\eta}'_3 \rangle^{0}
\end{bmatrix}
\end{equation}
with the additional condition that we have a pairing of frequencies that comes from the randomization.
By the randomization, we can obtain the analogue of property $\textbf{(*)}$ from the proof of Theorem \ref{Theorem 1}, i.e. we can deduce that each element in the set: 
$$\{\xi_1-\eta_2+\eta_2', \eta_2-\eta_3+\eta'_3, \eta_3, \eta'_2, \eta'_3,\xi_1-\tilde{\eta}_2+\tilde{\eta}_2', \tilde{\eta}_2-\tilde{\eta}_3+\tilde{\eta}'_3, \tilde{\eta}_3, \tilde{\eta}'_2, \tilde{\eta}'_3 \}$$ 
occurs at least twice in the list: 
$$(\xi_1-\eta_2+\eta_2', \eta_2-\eta_3+\eta'_3, \eta_3, \eta'_2, \eta'_3,\xi_1-\tilde{\eta}_2+\tilde{\eta}_2', \tilde{\eta}_2-\tilde{\eta}_3+\tilde{\eta}'_3, \tilde{\eta}_3, \tilde{\eta}'_2, \tilde{\eta}'_3).$$ 
The goal would be to distribute a factor of $\langle \xi_1 \rangle^{\alpha}$ over the first and second row of the matrix.
Let us consider one pairing in which we see that this is not possible:
\begin{equation}
\notag
\begin{cases}
\xi_1-\eta_2+\eta'_2=\eta_2-\eta_3+\eta'_3\\
\xi_1-\tilde{\eta}_2+\tilde{\eta}'_2=\tilde{\eta}_2-\tilde{\eta}_3+\tilde{\eta}'_3\\
\eta_3=\tilde{\eta}_3\\
\eta'_2=\eta'_3\\
\tilde{\eta}'_2=\tilde{\eta}'_3.
\end{cases}
\end{equation}
In this case, we note that:
$$\xi_1-\eta_2+\eta'_2=\eta_2-\eta_3+\eta'_3=\eta_2-\eta_3+\eta'_2.$$
Hence, it follows that:
$$\xi_1=2\eta_2-\eta_3.$$
We analogously obtain that:
$$\xi_1=2\tilde{\eta}_2-\tilde{\eta}_3=2\tilde{\eta}_2-\eta_3.$$
Consequently:
$$\eta_2=\tilde{\eta}_2.$$
Moreover, we can eliminate the sum in $\xi_1$ since $\xi_1=2\eta_2-\eta_3$.  Hence, the expression we need to bound is:

\begin{equation}
\notag
\eta_2,\eta'_2,\tilde{\eta}'_2,\eta_3; \langle 2\eta_2-\eta_3 \rangle^{2\alpha} \cdot
\begin{bmatrix}
\langle \eta_2-\eta_3+\eta'_2 \rangle^{0} & \langle \eta_2-\eta_3+\eta'_2  \rangle^{0} & \langle \eta_3 \rangle^{0} & \langle \eta'_2 \rangle^{0} & \langle \eta'_2 \rangle^{0} \\
\langle \eta_2-\eta_3+\tilde{\eta}'_2 \rangle^{0} & \langle \eta_2-\eta_3+\tilde{\eta}'_2  \rangle^{0} & \langle \eta_3 \rangle^{0} & \langle \tilde{\eta}'_2 \rangle^{0} & \langle \tilde{\eta}'_2 \rangle^{0}
\end{bmatrix}.
\end{equation}
We change variables  from $\eta_2-\eta_3$ to $\eta_2$ and the above sum becomes:
\begin{equation}
\notag
\eta_2,\eta'_2,\tilde{\eta}'_2,\eta_3; \langle 2\eta_2+\eta_3 \rangle^{2\alpha} \cdot
\begin{bmatrix}
\langle \eta_2+\eta'_2 \rangle^{0} & \langle \eta_2+\eta'_2  \rangle^{0} & \langle \eta_3 \rangle^{0} & \langle \eta'_2 \rangle^{0} & \langle \eta'_2 \rangle^{0} \\
\langle \eta_2+\tilde{\eta}'_2 \rangle^{0} & \langle \eta_2+\tilde{\eta}'_2  \rangle^{0} & \langle \eta_3 \rangle^{0} & \langle \tilde{\eta}'_2 \rangle^{0} & \langle \tilde{\eta}'_2 \rangle^{0}
\end{bmatrix}.
\end{equation}
At this point, we are supposed to distribute the two factors of $\langle 2\eta_2 + \eta_3 \rangle^{\alpha}$.
We can do this by using the fractional Leibniz rule in the first row as:
$$\langle 2\eta_2+\eta_3 \rangle^{\alpha} \lesssim \langle \eta_2+\eta'_2 \rangle^{\alpha} \cdot \langle \eta_2+\eta'_2 \rangle^{\alpha} \cdot \langle \eta_3 \rangle^{\alpha} \cdot \langle \eta'_2 \rangle^{\alpha} \cdot \langle \eta'_2 \rangle^{\alpha},$$
which follows from the fact that:
$$2\eta_2+\eta_3=(\eta_2+\eta'_2)+(\eta_2+\eta'_2)+\eta_3-\eta'_2-\eta'_2.$$
Similarly, we use the fractional Leibniz rule in the second row as:
$$\langle 2\eta_2+\eta_3 \rangle^{\alpha} \lesssim \langle \eta_2+\tilde{\eta}'_2 \rangle^{\alpha} \cdot \langle \eta_2+\tilde{\eta}'_2 \rangle^{\alpha} \cdot \langle \eta_3 \rangle^{\alpha} \cdot \langle \tilde{\eta}'_2 \rangle^{\alpha} \cdot \langle \tilde{\eta}'_2 \rangle^{\alpha}.$$
In particular, it suffices to estimate:
\begin{equation}
\notag
\eta_2,\eta'_2,\tilde{\eta}'_2,\eta_3;
\begin{bmatrix}
\langle \eta_2+\eta'_2 \rangle^{\alpha} & \langle \eta_2+\eta'_2  \rangle^{\alpha} & \langle \eta_3 \rangle^{\alpha} & \langle \eta'_2 \rangle^{\alpha} & \langle \eta'_2 \rangle^{\alpha} \\
\langle \eta_2+\tilde{\eta}'_2 \rangle^{\alpha} & \langle \eta_2+\tilde{\eta}'_2  \rangle^{\alpha} & \langle \eta_3 \rangle^{\alpha} & \langle \tilde{\eta}'_2 \rangle^{\alpha} & \langle \tilde{\eta}'_2 \rangle^{\alpha}
\end{bmatrix}.
\end{equation}
It is possible to estimate the sum in $\eta_2$ and in $\eta_3$ first by using the Cauchy-Schwarz inequality in these variables. If we want to 
estimate the sum in $\eta'_2$ or in $\tilde{\eta}'_2$, we need to lose derivatives in these variables and it is not possible to sum otherwise. This is due to the fact that the variables $\eta'_2$ and $\tilde{\eta}'_2$ occur in the same row of the matrix. 
\\
\\
In particular, we can use the previous method  to estimate the sum from above by:
$$\lesssim \big\|S^{(3,\alpha+\frac{3}{4}+\epsilon)}\gamma^{(3)}\big\|_{L^2(\Lambda^3 \times \Lambda^3)}^2.$$
Hence, in the case of the second Duhamel iteration, we cannot argue as in the Case IA3) in the proof of Theorem \ref{Theorem 1}. The reason why this is the case is that the frequency $\eta_2$ does not appear in the sum \eqref{Duhamel2Sum}. We are thus forced to use the fractional Leibniz rule in all of the variables.
\end{example}
Consequently, it is not possible to prove an estimate of the form \eqref{notabound} for the second Duhamel iteration in the class of general density matrices by using the pairings which come from the randomization. In what follows, we will explicitly write out the Duhamel expansion of arbitrary length. We will then use this formula to prove a randomized spacetime bound for a specific class of density matrices.

\subsection{The precise form of the Duhamel expansion term}
\label{The precise form of the Duhamel expansion term}
In this subsection, we explicitly write out the full form of the Duhamel expansion which occurs in the study of the randomized Gross-Pitaevskii hierarchy \eqref{randomizedGP}. In this expansion, we will again use the fact that the randomization is obtained by using standard Bernoulli random variables. We will use this explicit formula 
in the proof of the randomized spacetime estimates in non-resonant classes given in Proposition \ref{non-resonant1}
and Proposition \ref{non-resonant1} in the following two subsections.

The explicit expansion is obtained as follows.
Let us fix $n,\ell \in \mathbb{N}$ and $j_1,\ldots j_{\ell}, k_1, \ldots, k_{\ell} \in \mathbb{N}$, with $j_1<k_1 \leq n+\ell, \ldots, j_{\ell}<k_{\ell} \leq n+\ell$ and $t_1,t_2, \ldots, t_{\ell+1} \in \mathbb{R}$. We show that, for all sequences $(\gamma^{(m)})_m$ of density matrices, the following identity holds:
\begin{equation}
\label{Duhameln}
\Big(\mathcal{U}^{(n)}(t_1-t_2)\,[B^{\pm}_{j_1,k_1}]^{\omega}\,\mathcal{U}^{(n+1)}(t_2-t_3)\,[B^{\pm}_{j_2,k_2}]^{\omega} \cdots
\end{equation}
$$\cdots \mathcal{U}^{(n+\ell-1)}(t_{\ell}-t_{\ell+1}) \,[B^{\pm}_{j_{\ell},k_{\ell}}]^{\omega} \,\gamma^{(n+\ell)}\Big)\,\,\widehat{}\,\,(\xi_1,\ldots,\xi_n; \xi'_1,\ldots,\xi'_n)=$$

$$\mathop{\sum_{\eta_1,\ldots,\eta_{n+\ell}}}_{\eta'_1,\ldots,\eta'_{n+\ell}}^{*} e^{i(t_1-t_2)(\cdots)} \cdot e^{i(t_2-t_3)(\cdots)} \cdots e^{i(t_{\ell}-t_{\ell+1})(\cdots)} \cdot \widehat{\gamma}^{(n+\ell)}(\eta_1,\ldots,\eta_{n+\ell}; \eta'_1, \ldots, \eta'_{n+\ell}) \cdot$$
$$
\cdot \mathop{\prod_{1 \leq j \leq n}}_{\xi_j \in \mathcal{A}}
\Big\{h_{\xi_j}(\omega) \cdot h_{\eta_{1}^{j,1}} (\omega) \cdot h_{\eta_{2}^{j,1}} (\omega) \cdots h_{\eta_{N_j}^{j,1}}(\omega) \Big\} \cdot \mathop{\prod_{1 \leq j \leq n}}_{\xi'_j \in \mathcal{B}}
\Big\{h_{\xi'_j}(\omega) \cdot h_{\eta_{1}^{j,2}} (\omega) \cdot h_{\eta_{2}^{j,2}} (\omega) \cdots h_{\eta_{M_j}^{j,2}}(\omega) \Big\}.$$
Here, we need to explain the notation:

We say that $\xi_j \in \mathcal{A}$ for some $1 \leq j \leq n$ if a collision operator $[B_{j,k}^{\pm}]^{\omega}$ was applied to this frequency. In other words, $\xi_j \in \mathcal{A}$ if the frequency $\xi_j$ does not appear in the list $(\eta_1,\ldots,\eta_{n+\ell};\eta'_1,\ldots,\eta'_{n+\ell})$. 
Analogously, we say that $\xi'_j \in \mathcal{B}$ if a collision operator $[B_{j,k}^{\pm}]^{\omega}$ was applied to this frequency. The sets $\mathcal{A}$ and $\mathcal{B}$ are uniquely determined by the collision operators that we are applying.

Moreover, $$\mathop{\sum_{\eta_1,\ldots,\eta_{n+\ell}}}_{\eta'_1,\ldots,\eta'_{n+\ell}}^{*}(\cdots)$$ denotes the sum over $\eta_1,\ldots, \eta_{n+\ell}, \eta'_1,\ldots,\eta'_{n+\ell}$ which satisfy the following constraints:
\begin{itemize}
\item[$i)$] The set $\big\{\eta_1,\ldots,\eta_{n+\ell},\eta'_1,\ldots,\eta'_{n+\ell}\big\}$ can be written as the union of:
$$\Big(\mathop{\bigcup_{1 \leq j \leq n}}_{\xi_j \in \mathcal{A}}\big\{\eta^{j,1}_1,\ldots,\eta^{j,1}_{N_j}\big\}\Big) \cup \Big(\mathop{\bigcup_{1 \leq j \leq n}}_{\xi'_j \in \mathcal{B}}\big\{\eta^{j,2}_1,\ldots,\eta^{j,2}_{M_j}\big\}\Big)$$
(these are the new frequencies which we obtain after applying the collision operators) and:

$$\big\{\xi_1,\ldots,\xi_n,\xi'_1,\ldots,\xi'_n\big\} \setminus \big(\mathcal{A}\cup \mathcal{B}\big).$$
(these are the frequencies on which we do not apply the collision operators and which remain the same).
\item[$ii)$] Each $\xi_j \in \mathcal{A}$ can be written as:
$$\xi_j=\pm \eta_{1}^{j,1} \pm \eta_{2}^{j,1} \pm \cdots \pm \eta_{N_j}^{j,1}.$$

\item[$iii)$] Each $\xi'_j \in \mathcal{B}$ can be written as:
$$\xi'_j=\pm \eta_{1}^{j,2} \pm \eta_{2}^{j,2} \pm \cdots \pm \eta_{M_j}^{j,2}.$$
\end{itemize}
The choice of $\pm$ above is uniquely determined by the collision operators that we are applying. The choice of collision operators also uniquely determines the quantities $N_j$ and $M_j$ for $j=1,\ldots,n$.

To summarize, in $\stackrel{*}{\sum}$, we are summing over all the frequency configurations which satisfy the above conditions.
Let us note that the $(\cdots)$ in $e^{i(t_j-t_{j+1})(\cdots)}$ denotes a real-valued function in the frequencies. Consequently, the exponential has modulus one and hence does not affect the subsequent estimates. Finally, in the above calculations and all of its subsequent applications, we will always fix a choice of signs for each $[B^{\pm}_{j,k}]^{\omega}$.

Before we proceed, let us give an explicit example of the expansion \eqref{Duhameln}:
\begin{example}
We suppose that $n=2, \ell=3$. For simplicity of notation, let us analyze the case when $t_1=t_2=t_3=t_4=0$. As was noted earlier, in the general case, we would multiply all the terms by factors of modulus $1$. Let us consider the collision operators $[B_{1,2}^{+}]^{\omega}, [B_{2,3}^{-}]^{\omega}, [B_{4,5}^{-}]^{\omega}$.
\\
\\
In particular, we consider:
$$\Big([B_{1,2}^{+}]^{\omega} [B_{2,3}^{-}]^{\omega} [B_{4,5}^{-}]^{\omega} \gamma^{(5)} \Big)\,\,\widehat{}\,\,(\xi_1,\xi_2;\xi'_1,\xi'_2)=$$
$$=\sum_{\xi_3,\xi'_3} \Big([B_{2,3}^{-}]^{\omega} [B_{4,5}^{-}]^{\omega} \gamma^{(5)}\Big)\,\,\widehat{}\,\,(\xi_1-\xi_3+\xi'_3,\xi_3,\xi_2; \xi'_1,\xi'_3,\xi'_2) \cdot h_{\xi_1}(\omega) \cdot h_{\xi_1-\xi_3+\xi'_3}(\omega) \cdot h_{\xi_3}(\omega) \cdot h_{\xi'_3}(\omega)=$$
$$=\sum_{\xi_3,\xi'_3,\xi_4,\xi'_4} \Big([B_{4,5}^{-}]^{\omega} \gamma^{(5)} \Big)\,\,\widehat{}\,\,(\xi_1-\xi_3+\xi'_3,\xi_3,\xi_4,\xi_2; \xi'_1,\xi'_3-\xi'_4+\xi_4,\xi'_4,\xi'_2) \cdot$$
$$h_{\xi_1}(\omega) \cdot h_{\xi_1-\xi_3+\xi'_3}(\omega) \cdot h_{\xi_3}(\omega) \cdot h_{\xi'_3}(\omega) \cdot h_{\xi'_3}(\omega) \cdot h_{\xi'_3-\xi'_4+\xi_4}(\omega) \cdot h_{\xi_4}(\omega) \cdot h_{\xi'_4}(\omega)$$
\\
Since $h_{\xi'_3}^2(\omega)=1$, this expression equals:
$$\sum_{\xi_3,\xi'_3,\xi_4,\xi'_4} \Big([B_{4,5}^{-}]^{\omega} \gamma^{(5)} \Big)\,\,\widehat{}\,\,(\xi_1-\xi_3+\xi'_3,\xi_3,\xi_4,\xi_2; \xi'_1,\xi'_3-\xi'_4+\xi_4,\xi'_4,\xi'_2) \cdot$$
$$h_{\xi_1}(\omega) \cdot h_{\xi_1-\xi_3+\xi'_3}(\omega) \cdot h_{\xi_3}(\omega) \cdot h_{\xi'_3-\xi'_4+\xi_4}(\omega) \cdot h_{\xi_4}(\omega) \cdot h_{\xi'_4}(\omega)=$$
$$=\sum_{\xi_3,\xi'_3,\xi_4,\xi'_4,\xi_5,\xi'_5} \widehat{\gamma}^{(5)}(\xi_1-\xi_3+\xi'_3,\xi_3,\xi_4,\xi_2,\xi_5; \xi'_1,\xi'_3-\xi'_4+\xi_4,\xi'_4,\xi'_2-\xi'_5+\xi_5,\xi'_5) \cdot$$
$$h_{\xi_1}(\omega) \cdot h_{\xi_1-\xi_3+\xi'_3}(\omega) \cdot h_{\xi_3}(\omega) \cdot h_{\xi'_3-\xi'_4+\xi_4}(\omega) \cdot h_{\xi_4}(\omega) \cdot h_{\xi'_4}(\omega) \cdot$$
$$h_{\xi'_2}(\omega) \cdot h_{\xi'_2-\xi'_5+\xi_5}(\omega) \cdot h_{\xi_5}(\omega) \cdot h_{\xi'_5}(\omega).$$
Hence, we take $\mathcal{A}:=\{\xi_1\}$ and $\mathcal{B}:=\{\xi'_2\}$ and we note that:
$$\xi_1=(\xi_1-\xi_3+\xi'_3)+\xi_3+\xi_4-(\xi'_3-\xi'_4+\xi_4)-\xi'_4$$
$$\xi'_2=-\xi_5+(\xi'_2-\xi'_5+\xi_5)+\xi'_5.$$ 
In the notation from \eqref{Duhameln}, we hence take:
$$\eta_{1}^{1,1}:=\xi_1-\xi_3+\xi'_3, \eta_{2}^{1,1}:=\xi_3, \eta_{3}^{1,1}:=\xi_4, \eta_{4}^{1,1}:=\xi'_3-\xi'_4+\xi_4, \eta_{5}^{1,1}:=\xi'_4$$
$$\eta_{1}^{2,2}:=\xi_5,\eta_{2}^{2,2}:=\xi'_2-\xi'_5+\xi_5,\eta_{3}^{2,2}:=\xi'_5.$$
Consequently, we can write:
$$\Big([B_{1,2}^{+}]^{\omega} [B_{2,3}^{-}]^{\omega} [B_{4,5}^{-}]^{\omega} \gamma^{(5)} \Big)\,\,\widehat{}\,\,(\xi_1,\xi_2;\xi'_1,\xi'_2)=$$
$$=\mathop{\sum_{\eta^{1,1}_1+\eta^{1,1}_2+\eta^{1,1}_3-\eta^{1,1}_4-\eta^{1,1}_5=\,\xi_1}}_{-\eta^{2,2}_1+\eta^{2,2}_2+\eta^{2,2}_3=\,\xi'_2} \widehat{\gamma}^{(5)}(\eta^{1,1}_2,\eta^{1,1}_2,\eta^{1,1}_3,\xi_2,\eta^{2,2}_1;
\xi'_1,\eta^{1,1}_4,\eta^{1,1}_5,\eta^{2,2}_2,\eta^{2,2}_3) \cdot$$
$$h_{\xi_1}(\omega) \cdot h_{\eta^{1,1}_1}(\omega) \cdot h_{\eta^{1,1}_2}(\omega) \cdot h_{\eta^{1,1}_3}(\omega) \cdot h_{\eta^{1,1}_4}(\omega) \cdot h_{\eta^{1,1}_5}(\omega) \cdot$$
$$h_{\xi'_2}(\omega) \cdot h_{\eta^{2,2}_1}(\omega) \cdot h_{\eta^{2,2}_2}(\omega) \cdot h_{\eta^{2,2}_3}(\omega).$$
This is the expansion given in \eqref{Duhameln}.
\end{example}

We now prove the identity \eqref{Duhameln} in the general case by induction on $\ell$.
Both the base case $\ell=1$ and the inductive step follow from the form of the randomized collision operator. In particular, we note the  following two identities, for $r \in \mathbb{N}$ and for $j<k \leq r+1$:

\begin{itemize}
\item[1)] \textbf{Identity for $[B_{j,k}^{+}]^{\omega}$.}
\begin{equation}
\label{Duhameln1}
\Big(\mathcal{U}^{(r)}(t)[B^{+}_{j,k}]^{\omega}\gamma^{(r+1)}_{\omega}\Big)\,\,\widehat{}\,\,(\vec{\mu}_r; \vec{\mu}'_r)=\sum_{\lambda,\lambda'} e^{it(-|\mu_j-\lambda+\lambda'|^2-|\lambda|^2-|\vec{\mu}_r|^2+|\mu_j|^2+|\vec{\mu}'_r|^2+|\lambda'|^2)} \cdot 
\end{equation}
$$\widehat{\gamma}^{(r+1)}_{\omega}(\mu_1,\ldots,\mu_{j-1},\mu_j-\lambda+\lambda',\mu_{j+1},\ldots,\mu_{k-1},\lambda,\mu_{k+1},\ldots,\mu_r;\mu'_1,\ldots,\mu'_{k-1},\lambda',\mu'_{k+1},\ldots,\mu'_r) \cdot$$
$$h_{\mu_j}(\omega) \cdot h_{\mu_j-\lambda+\lambda'}(\omega) \cdot h_{\lambda}(\omega) \cdot h_{\lambda'}(\omega).$$
Here, the subscript $\omega$ in $\gamma^{(r+1)}_{\omega}$ denotes $\omega$ dependence in the density matrix (through factors of $h_{\xi}(\omega)$).
We note in the above formula, that $\mu_j$ is no longer taken to be a frequency in $\widehat{\gamma_{\omega}}^{(r+1)}$, but the random factor $h_{\mu_j}(\omega)$ corresponding to $\mu_j$ still occurs.
It is important to analyze this factor.
\\
\\
We observe that the following two cases can occur:
\begin{itemize}
\item[a)] $h_{\mu_j}(\omega)$ appears in $\widehat{\gamma_{\omega}}^{(r+1)}$; In this case we use the identity $h_{\mu_j}^2(\omega)=1$ and we cancel the factor of $h_{\mu_j}(\omega)$ in $\widehat{\gamma_{\omega}}^{(r+1)}$ with the factor of $h_{\mu_j}(\omega)$ in \eqref{Duhameln1}. Consequently, the term $h_{\mu_j}(\omega)$ corresponding to $\mu_j$ does not appear in the random part at this stage.
\item[b)] $h_{\mu_j}(\omega)$ does not appear in $\widehat{\gamma_{\omega}}^{(r+1)}$; In this case $\mu_j=\xi_{\ell}$ for some $1 \leq \ell \leq r$ and the factor $h_{\xi_{\ell}}(\omega)$ corresponding to $\xi_{\ell}$ appears in the random part at this stage. We say that $\xi_{\ell} \in \mathcal{A}$.
\end{itemize}
In either case, we have the following relation between the frequencies of the left and right-hand side of \eqref{Duhameln1}
\begin{equation}
\label{Duhameln1sum}
\mu_j=(\mu_j-\lambda+\lambda')+\lambda-\lambda'.
\end{equation}
\item[2)] \textbf{Identity for $[B_{j,k}^{-}]^{\omega}$.}
\\
For $\gamma^{(r+1)}_{\omega}$ as before, we note that:
\begin{equation}
\label{Duhameln2}
\Big(\mathcal{U}^{(r)}(t)[B^{-}_{j,k}]^{\omega}\gamma^{(r+1)}_{\omega}\Big)\,\,\widehat{}\,\,(\vec{\mu}_r;\vec{\mu}'_r)=\sum_{\lambda,\lambda'} e^{it(-|\vec{\mu}_r|^2-|\lambda|^2+|\mu'_j+\lambda-\lambda'|^2+|\lambda'|^2+|\vec{\mu}'_r|^2-|\mu_j|^2)} \cdot
\end{equation}
$$
\widehat{\gamma}^{(r+1)}_{\omega}(\mu_1,\ldots,\mu_{k-1},\lambda,\mu_{k+1},\ldots,\mu_n;\mu'_1,\ldots,\mu'_{j-1},\mu'_j+\lambda-\lambda',\mu'_{j+1},\ldots,\mu'_{k-1},\lambda',\mu'_{k+1},\ldots,\mu'_n) \cdot
$$
$$h_{\mu'_j}(\omega) \cdot h_{\mu'_j+\lambda-\lambda'}(\omega) \cdot h_{\lambda}(\omega) \cdot h_{\lambda'}(\omega).$$
As before, two cases can occur:
\begin{itemize}
\item[a)] $h_{\mu'_j}(\omega)$ appears in $\widehat{\gamma}^{(r+1)}_{\omega}$; In this case, we use the fact that $h_{\mu'_j}^2(\omega)=1$ and we deduce as before that the term $h_{\mu'_j}(\omega)$ corresponding to $\mu'_j$ does not appear in the random part at this stage.  

\item[b)] $h_{\mu'_j}(\omega)$ does not appear $\widehat{\gamma_{\omega}}^{(r+1)}$; In this case, we note that $\mu'_j=\xi'_{\ell}$ for some $1 \leq \ell \leq n$. Consequently, the factor $h_{\xi'_{\ell}}(\omega)$ corresponding to $\xi'_{\ell}$ appears in the random part at this stage. We say that $\xi'_{\ell} \in \mathcal{B}$.
\end{itemize}
We furthermore note the following relation between the frequencies occuring on the left and right-hand side of \eqref{Duhameln2}:
\begin{equation}
\label{Duhameln2sum}
\mu'_j=(\mu'_j-\lambda+\lambda')-\lambda+\lambda'.
\end{equation}
\end{itemize}
The identity \eqref{Duhameln} now follows by induction and by the use of \eqref{Duhameln1} and \eqref{Duhameln2}. The construction of the $\eta^{j,1}_{\ell}$ and $\eta^{j,2}_{\ell}$ is based on the relations \eqref{Duhameln1sum} and \eqref{Duhameln2sum}.

Similarly as before, we denote by:
$$\mathop{\sum_{\tilde{\eta}_1,\ldots,\tilde{\eta}_{n+\ell}}}_{\tilde{\eta}'_1,\ldots,\tilde{\eta}'_{n+\ell}}^{*}$$
the sum over $\tilde{\eta}_1,\ldots,\tilde{\eta}_{n+\ell}, \tilde{\eta}'_1,\ldots,\tilde{\eta}'_{n+\ell}$ with the analogous constraints satisfied when the $\eta$ and $\eta'$ variables are replaced by the $\tilde{\eta}$ and $\tilde{\eta}'$ variables.
In other words, for the same choice of collision operators and for the same definition of sets $\mathcal{A}$ and $\mathcal{B}$ and numbers $N_j$ and $M_j$ as before, we are summing over  $\tilde{\eta}_1,\ldots,\tilde{\eta}_{n+\ell}, \tilde{\eta}'_1,\ldots,\tilde{\eta}'_{n+\ell}$ such that:

\begin{itemize}
\item[$i)$] The set $\big\{\tilde{\eta}_1,\ldots,\tilde{\eta}_{n+\ell},\tilde{\eta}'_1,\ldots,\tilde{\eta}'_{n+\ell}\big\}$ can be written as the union of:
$$\Big(\mathop{\bigcup_{1 \leq j \leq n}}_{\xi_j \in \mathcal{A}}\big\{\tilde{\eta}^{j,1}_1,\ldots,\tilde{\eta}^{j,1}_{N_j}\big\}\Big) \cup \Big(\mathop{\bigcup_{1 \leq j \leq n}}_{\xi'_j \in \mathcal{B}}\big\{\tilde{\eta}^{j,2}_1,\ldots,\tilde{\eta}^{j,2}_{M_j}\big\}\Big)$$
(these are the new frequencies which we obtain after applying the collision operators) and:

$$\big\{\xi_1,\ldots,\xi_n,\xi'_1,\ldots,\xi'_n\} \setminus \big(\mathcal{A}\cup \mathcal{B}\big).$$
(these are the frequencies on which we do not apply the collision operators and which remain the same).
\item[$ii)$] Each $\xi_j \in \mathcal{A}$ can be written as:
$$\xi_j=\pm \tilde{\eta}_{1}^{j,1} \pm \tilde{\eta}_{2}^{j,1} \pm \cdots \pm \tilde{\eta}_{N_j}^{j,1}.$$

\item[$iii)$] Each $\xi'_j \in \mathcal{B}$ can be written as:
$$\xi'_j=\pm \tilde{\eta}_{1}^{j,2} \pm \tilde{\eta}_{2}^{j,2} \pm \cdots \pm \tilde{\eta}_{M_j}^{j,2}.$$
\end{itemize}

From \eqref{Duhameln}, we deduce that:

\begin{equation}
\label{Duhameln1A}
\Big|\Big(S^{(n,\alpha)}\,\mathcal{U}^{(n)}(t_1-t_2)\,[B^{\pm}_{j_1,k_1}]^{\omega}\,\mathcal{U}^{(n+1)}(t_2-t_3)\,[B^{\pm}_{j_2,k_2}]^{\omega} \cdots
\end{equation}
$$\cdots \,\mathcal{U}^{(n+\ell-1)}(t_{\ell}-t_{\ell+1}) \,[B^{\pm}_{j_{\ell},k_{\ell}}]^{\omega} \,\gamma^{(n+\ell)}\Big)\,\,\widehat{}\,\,(\xi_1,\ldots,\xi_n; \xi'_1,\ldots,\xi'_n)\Big|^2=$$
$$=\langle \xi_1 \rangle^{2\alpha} \cdots \langle \xi_n \rangle^{2\alpha} \cdot \langle \xi'_1 \rangle^{2\alpha} \cdots \langle \xi'_n \rangle^{2\alpha} \cdot \mathop{\sum_{\eta_1,\ldots,\eta_{n+\ell}}}_{\eta'_1,\ldots,\eta'_{n+\ell}}^{*}
\mathop{\sum_{\tilde{\eta}_1,\ldots,\tilde{\eta}_{n+\ell}}}_{\tilde{\eta}'_1,\ldots,\tilde{\eta}'_{n+\ell}}^{*}e^{i(t_1-t_2)(\cdots)} \cdot e^{i(t_2-t_3)(\cdots)} \cdots e^{i(t_{\ell}-t_{\ell+1})(\cdots)} \cdot $$
$$ \widehat{\gamma}^{(n+\ell)}(\eta_1,\ldots,\eta_{n+\ell}; \eta'_1,\ldots,\eta'_{n+\ell}) \cdot \overline{\widehat{\gamma}^{(n+\ell)}}(\tilde{\eta}_1,\ldots,\tilde{\eta}_{n+\ell}; \tilde{\eta}'_1,\ldots,\tilde{\eta}'_{n+\ell}) \cdot
$$
$$
\cdot \mathop{\prod_{1 \leq j \leq n}}_{\xi_j \in \mathcal{A}}
\Big\{h_{\xi_j}(\omega) \cdot h_{\eta_{1}^{j,1}} (\omega) \cdot h_{\eta_{2}^{j,1}} (\omega) \cdots h_{\eta_{N_j}^{j,1}}(\omega) \Big\} \cdot \mathop{\prod_{1 \leq j \leq n}}_{\xi'_j \in \mathcal{B}}
\Big\{h_{\xi'_j}(\omega) \cdot h_{\eta_{1}^{j,2}} (\omega) \cdot h_{\eta_{2}^{j,2}} (\omega) \cdots h_{\eta_{M_j}^{j,2}}(\omega) \Big\} \cdot$$
$$
\cdot \mathop{\prod_{1 \leq j \leq n}}_{\xi_j \in \mathcal{A}}
\Big\{h_{\xi_j}(\omega) \cdot h_{\tilde{\eta}_{1}^{j,1}} (\omega) \cdot h_{\tilde{\eta}_{2}^{j,1}} (\omega) \cdots h_{\tilde{\eta}_{N_j}^{j,1}}(\omega) \Big\} \cdot \mathop{\prod_{1 \leq j \leq n}}_{\xi'_j \in \mathcal{B}}
\Big\{h_{\xi'_j}(\omega) \cdot h_{\tilde{\eta}_{1}^{j,2}} (\omega) \cdot h_{\tilde{\eta}_{2}^{j,2}} (\omega) \cdots h_{\tilde{\eta}_{M_j}^{j,2}}(\omega) \Big\}=$$
$$=\langle \xi_1 \rangle^{2\alpha} \cdots \langle \xi_n \rangle^{2\alpha} \cdot \langle \xi'_1 \rangle^{2\alpha} \cdots \langle \xi'_n \rangle^{2\alpha} \cdot \mathop{\sum_{\eta_1,\ldots,\eta_{n+\ell}}}_{\eta'_1,\ldots,\eta'_{n+\ell}}^{*}
\mathop{\sum_{\tilde{\eta}_1,\ldots,\tilde{\eta}_{n+\ell}}}_{\tilde{\eta}'_1,\ldots,\tilde{\eta}'_{n+\ell}}^{*}e^{i(t_1-t_2)(\cdots)} \cdot e^{i(t_2-t_3)(\cdots)} \cdots e^{i(t_{\ell}-t_{\ell+1})(\cdots)} \cdot $$
$$ \widehat{\gamma}^{(n+\ell)}(\eta_1,\ldots,\eta_{n+\ell}; \eta'_1,\ldots,\eta'_{n+\ell}) \cdot \overline{\widehat{\gamma}^{(n+\ell)}}(\tilde{\eta}_1,\ldots,\tilde{\eta}_{n+\ell}; \tilde{\eta}'_1,\ldots,\tilde{\eta}'_{n+\ell}) \cdot
$$
$$
\cdot \mathop{\prod_{1 \leq j \leq n}}_{\xi_j \in \mathcal{A}}
\Big\{h_{\eta_{1}^{j,1}} (\omega) \cdot h_{\eta_{2}^{j,1}} (\omega) \cdots h_{\eta_{N_j}^{j,1}}(\omega) \Big\} \cdot \mathop{\prod_{1 \leq j \leq n}}_{\xi'_j \in \mathcal{B}}
\Big\{h_{\eta_{1}^{j,2}} (\omega) \cdot h_{\eta_{2}^{j,2}} (\omega) \cdots h_{\eta_{M_j}^{j,2}}(\omega) \Big\} \cdot$$
$$
\cdot \mathop{\prod_{1 \leq j \leq n}}_{\xi_j \in \mathcal{A}}
\Big\{h_{\tilde{\eta}_{1}^{j,1}} (\omega) \cdot h_{\tilde{\eta}_{2}^{j,1}} (\omega) \cdots h_{\tilde{\eta}_{N_j}^{j,1}}(\omega) \Big\} \cdot \mathop{\prod_{1 \leq j \leq n}}_{\xi'_j \in \mathcal{B}}
\Big\{h_{\tilde{\eta}_{1}^{j,2}} (\omega) \cdot h_{\tilde{\eta}_{2}^{j,2}} (\omega) \cdots h_{\tilde{\eta}_{M_j}^{j,2}}(\omega) \Big\}.$$
Here, we used the fact that $h_{\xi_j}^2(\omega)=h_{\xi'_j}^2(\omega)=1$. 
Let us note that the factors $e^{i(t_j-t_{j+1})(\cdots)}$ in \eqref{Duhameln} and in \eqref{Duhameln1A} are not necessarily the same. In what follows, we will only use the fact that these factors have modulus equal to $1$.

From \eqref{Duhameln1A} and Plancherel's theorem, it follows that:
\begin{equation}
\label{Duhamelnsquared}
\Big\|S^{(n,\alpha)}\mathcal{U}^{(n)}(t_1-t_2)\,[B^{\pm}_{j_1,k_1}]^{\omega}\,\mathcal{U}^{(n+1)}(t_2-t_3)\,[B^{\pm}_{j_2,k_2}]^{\omega} \cdots
\end{equation}
$$\cdots \mathcal{U}^{(n+\ell-1)}(t_{\ell}-t_{\ell+1}) \,[B^{\pm}_{j_{\ell},k_{\ell}}]^{\omega} \,\gamma^{(n+\ell)}\Big\|_{L^2(\Omega \times \Lambda^n \times \Lambda^n)}^2=$$
$$=\int_{\Omega} \Big\{ \mathop{\sum_{\xi_1,\ldots,\xi_n}}_{\xi'_1,\ldots,\xi'_n} \langle \xi_1 \rangle^{2\alpha} \cdots \langle \xi_n \rangle^{2\alpha} \cdot \langle \xi'_1 \rangle^{2\alpha} \cdots \langle \xi'_n \rangle^{2\alpha} \cdot \mathop{\sum_{\eta_1,\ldots,\eta_{n+\ell}}}_{\eta'_1,\ldots,\eta'_{n+\ell}}^{*}
\mathop{\sum_{\tilde{\eta}_1,\ldots,\tilde{\eta}_{n+\ell}}}_{\tilde{\eta}'_1,\ldots,\tilde{\eta}'_{n+\ell}}^{*}  e^{i(t_1-t_2)(\cdots)} \cdot e^{i(t_2-t_3)(\cdots)} \cdots e^{i(t_{\ell}-t_{\ell+1})(\cdots)}  \cdot$$
$$\widehat{\gamma}^{(n+\ell)}(\eta_1,\ldots,\eta_{n+\ell}; \eta'_1,\ldots,\eta'_{n+\ell}) \cdot \overline{\widehat{\gamma}^{(n+\ell)}}(\tilde{\eta}_1,\ldots,\tilde{\eta}_{n+\ell}; \tilde{\eta}'_1,\ldots,\tilde{\eta}'_{n+\ell}) \cdot
$$
$$\cdot \mathop{\prod_{1 \leq j \leq n}}_{\xi_j \in \mathcal{A}}
\Big\{h_{\eta_{1}^{j,1}} (\omega) \cdot h_{\eta_{2}^{j,1}} (\omega) \cdots h_{\eta_{N_j}^{j,1}}(\omega) \cdot h_{\tilde{\eta}_{1}^{j,1}} (\omega) \cdot h_{\tilde{\eta}_{2}^{j,1}} (\omega) \cdots h_{\tilde{\eta}_{N_j}^{j,1}}(\omega) \Big\} \cdot$$
$$ \cdot \mathop{\prod_{1 \leq j \leq n}}_{\xi'_j \in \mathcal{B}}
\Big\{h_{\eta_{1}^{j,2}} (\omega) \cdot h_{\eta_{2}^{j,2}} (\omega) \cdots h_{\eta_{M_j}^{j,2}}(\omega) \cdot h_{\tilde{\eta}_{1}^{j,2}} (\omega) \cdot h_{\tilde{\eta}_{2}^{j,2}} (\omega) \cdots h_{\tilde{\eta}_{M_j}^{j,2}}(\omega) \Big\} \Big\} \,dp(\omega).$$


This is the quantity that we would like to estimate.

\subsection{A special class of density matrices}
\label{A special class of density matrices}

In this subsection, we study a special class of density matrices $(\gamma^{(m)})_m$ in which we can prove an upper bound for the expression in \eqref{Duhamelnsquared}. We note that by the results in Subsection \ref{Difficulties arising from higher-order Duhamel expansions}, we cannot estimate this quantity using pairing of frequencies due to randomization in the general class of density matrices, even in the case when $\ell=2$. As we will see below, we will be able to prove an upper bound on \eqref{Duhamelnsquared} if we impose additional \emph{non-resonance} conditions. This is reminiscent of the ideas in \cite{B2,COh,NS}. The precise bound is given in Proposition \ref{non-resonant1} below.

More precisely, we consider:

\begin{definition}
\label{Class_N}
Let $\mathcal{N}$ denote the class of all density matrices $(\gamma^{(m)})_m$ such that:
\begin{itemize}
\item[$i)$] For all $m \in \mathbb{N}$ and for all $(\xi_1,\ldots,\xi_m,\xi'_1,\ldots,\xi'_m) \in (\mathbb{Z}^3)^m \times  (\mathbb{Z}^3)^m$, $$\widehat{\gamma}^{(m)}(\xi_1,\ldots,\xi_m;\xi'_1,\ldots,\xi'_m)=0$$ unless 
$$|\xi_1|>|\xi_2|>\cdots>|\xi_m|>|\xi'_1|>|\xi'_2|>\cdots |\xi'_m|.$$
\item[$ii)$] There exists $C_1>0$ independent of $m$ such that for all $m \in \mathbb{N}$:
$$\big\|S^{(m,\alpha)}\gamma^{(m)}\big\|_{L^2(\Lambda^m \times \Lambda^m)} \leq C_1^m.$$
\end{itemize}
\end{definition}

We call the class $\mathcal{N}$ the class of \emph{non-resonant density matrices.}
Let us note that the elements in $\mathcal{N}$ are \emph{not symmetric} in the sense that it is not in general true that for all $m \in \mathbb{N}$, $(x_1,\ldots,x_m,x'_1,\ldots,x'_m) \in \Lambda^m \times  \Lambda^m$, $\sigma \in S_m$:
$$\gamma^{(m)}(x_{\sigma(1)},\ldots,x_{\sigma(m)}; x'_{\sigma(1)}, \ldots, x'_{\sigma(m)})=\gamma^{(m)}(x_1,\ldots,x_m;x'_1,\ldots,x_m).$$
Furthermore, we observe that the class $\mathcal{N}$ is non-empty. We can construct an element of $\mathcal{N}$ by taking a density matrix satisfying the condition $ii)$ and by then setting all of the Fourier coefficients not satisfying the constraint from $i)$ to equal zero. 
Finally, let us note that the class $\mathcal{N}$ does not contain the factorized objects studied earlier. In the analysis that follows, we see that the ordering in condition $i)$ on the sizes of the frequencies $\xi_1, \ldots, \xi_m, \xi'_1, \ldots \xi'_m$ could be replaced by any other (fixed) ordering of the elements of $\{1,2,\ldots,2m\}$.
\\
\\
The rest of this section is devoted to the study of the randomized Gross-Pitaevskii hierarchy in the non-resonant class $\mathcal{N}$.


\subsection{The randomized estimate in the class $\mathcal{N}$}
\label{Estimate in N}

We use the randomization in \eqref{Duhamelnsquared} in order to deduce that each element of:

$$\mathcal{S}:=\Big(\mathop{\bigcup_{1 \leq j \leq n}}_{\xi_j \in \mathcal{A}}\big\{\eta^{j,1}_1,\ldots,\eta^{j,1}_{N_j}\big\}\Big) \cup \Big(\mathop{\bigcup_{1 \leq j \leq n}}_{\xi'_j \in \mathcal{B}}\big\{\eta^{j,2}_1,\ldots,\eta^{j,2}_{M_j}\big\}\Big) $$
gets paired with at least one element of:
$$\tilde{\mathcal{S}}:=\Big(\mathop{\bigcup_{1 \leq j \leq n}}_{\xi_j \in \mathcal{A}}\big\{\tilde{\eta}^{j,1}_1,\ldots,\tilde{\eta}^{j,1}_{N_j}\big\}\Big) \cup \Big(\mathop{\bigcup_{1 \leq j \leq n}}_{\xi'_j \in \mathcal{B}}\big\{\tilde{\eta}^{j,2}_1,\ldots,\tilde{\eta}^{j,2}_{M_j}\big\}\Big). $$
We recall that, by definition of the class $\mathcal{N}$, the elements of each of the sets $\mathcal{S}$ and $\tilde{\mathcal{S}}$ are mutually distinct. Since $\# \mathcal{S}= \# \tilde{\mathcal{S}}$, it follows that each element of $\mathcal{S}$ gets paired with exactly one element of $\tilde{\mathcal{S}}$ and vice versa.
Let us recall the \emph{fractional Leibniz rule}, which states that for all $\alpha \geq 0$, there exists $C_3>0$ such that, for all $x,y \in \mathbb{R}^3$:
\begin{equation}
\label{3DFractionalLeibnizRule}
\langle x+y \rangle^{\alpha} \leq C_3 \langle x \rangle^{\alpha} \cdot \langle y \rangle^{\alpha}.
\end{equation}
The constant $C_3$ depends on $\alpha$.
We iteratively use the fractional Leibniz rule and the fact that there is a bijection between $\mathcal{S}$ and $\tilde{\mathcal{S}}$ to deduce that, for $\alpha \geq 0$:
\begin{equation}
\label{Duhamelnsquared2}
\big\|S^{(n,\alpha)}\,\mathcal{U}^{(n)}(t_1-t_2)\,[B^{\pm}_{j_1,k_1}]^{\omega}\,\mathcal{U}^{(n+1)}(t_2-t_3)\,[B^{\pm}_{j_2,k_2}]^{\omega} \cdots
\end{equation}
$$\cdots \,\mathcal{U}^{(n+\ell-1)}(t_{\ell}-t_{\ell+1}) \,[B^{\pm}_{j_{\ell},k_{\ell}}]^{\omega} \,\gamma^{(n+\ell)}\big\|_{L^2(\Omega \times \Lambda^n \times \Lambda^n)}^2 \leq$$
$$\leq C_3^{\,2(n+\ell)} \mathop{\sum_{\eta_1,\ldots,\eta_{n+\ell}, \eta'_1, \ldots, \eta'_{n+\ell}}}_{\tilde{\eta}_1,\ldots,\tilde{\eta}_{n+\ell},\tilde{\eta}'_1,\ldots,\tilde{\eta}'_{n+\ell}}^{**} \langle \eta_1 \rangle^{\alpha} \cdots \langle \eta_{n+\ell} \rangle^{\alpha} \cdot \langle \eta'_1 \rangle^{\alpha} \cdots \langle \eta'_{n+\ell} \rangle^{\alpha} \cdot \langle \tilde{\eta}_1 \rangle^{\alpha} \cdots \langle \tilde{\eta}_{n+\ell} \rangle^{\alpha} \cdot \langle \tilde{\eta}'_1 \rangle^{\alpha} \cdots \langle \tilde{\eta}'_{n+\ell} \rangle^{\alpha}.$$
$$\cdot \big|\widehat{\gamma}^{(n+\ell)}(\eta_1,\ldots,\eta_{n+\ell};\eta'_1,\ldots,\eta'_{n+\ell})\big| \cdot \big|\widehat{\gamma}^{(n+\ell)}(\tilde{\eta}_1,\ldots,\tilde{\eta}_{n+\ell};\tilde{\eta}'_1,\ldots,\tilde{\eta}'_{n+\ell})\big|.$$
More precisely, we use conditions $ii)$ and $iii)$ 
from the definitions of the sets $\{\eta_1,\ldots, \eta_{n+\ell},\eta'_1,\ldots,\eta'_{n+\ell}\}$ and $\{\tilde{\eta}_1,\ldots,\tilde{\eta}_{n+\ell},\tilde{\eta}'_1,\ldots,\tilde{\eta}'_{n+\ell}\}$
in the sums: $$\mathop{\sum_{\eta_1,\ldots,\eta_{n+\ell}}}_{\eta'_1,\ldots,\eta'_{n+\ell}}^{*}(\cdots)$$ 
and
$$\mathop{\sum_{\tilde{\eta}_1,\ldots,\tilde{\eta}_{n+\ell}}}_{\tilde{\eta}'_1,\ldots,\tilde{\eta}'_{n+\ell}}^{*}(\cdots)$$ 
and we apply the estimate \eqref{3DFractionalLeibnizRule} at most $2(n+\ell)$ times to obtain the factor of $C_3^{\,2(n+\ell)}$ in the bound. Here,
$$\mathop{\sum_{\eta_1,\ldots,\eta_{n+\ell}, \eta'_1, \ldots, \eta'_{n+\ell}}}_{\tilde{\eta}_1,\ldots,\tilde{\eta}_{n+\ell},\tilde{\eta}'_1,\ldots,\tilde{\eta}'_{n+\ell}}^{**}$$
denotes the sum $$\mathop{\sum_{\eta_1,\ldots,\eta_{n+\ell}}}_{\eta'_1,\ldots,\eta'_{n+\ell}}^{*}
\mathop{\sum_{\tilde{\eta}_1,\ldots,\tilde{\eta}_{n+\ell}}}_{\tilde{\eta}'_1,\ldots,\tilde{\eta}'_{n+\ell}}^{*}$$  with the additional constraint that there exists a bijective pairing between the sets $\mathcal{S}$ and $\tilde{\mathcal{S}},$ which was noted earlier. 
We also use the fact that $|e^{i(t_j-t_{j+1})(\cdots)}|=1$ for all $j=1,\ldots,\ell.$ Finally, for frequencies $\xi_j \notin \mathcal{A}$ and $\xi'_k \notin \mathcal{B}$, we use the fact that
$\langle \xi_j \rangle^{\alpha} \geq 1$ and $\langle \xi'_k \rangle^{\alpha} \geq 1$.
\\
\\
If $(\gamma^{(m)})_m \in \mathcal{N}$, we can use condition $i)$ in the definition $\mathcal{N}$ in order to deduce that the only possible pairing is given by:
$$\eta_1=\tilde{\eta}_1, \eta_2=\tilde{\eta}_2,\ldots, \eta_{n+\ell}=\tilde{\eta}_{n+\ell}$$
$$\eta'_1=\tilde{\eta}'_1, \eta'_2=\tilde{\eta}'_2, \ldots, \eta'_{n+\ell}=\tilde{\eta}'_{n+\ell}.$$
Consequently, the right-hand side of the expression in \eqref{Duhamelnsquared2} equals:

$$C_3^{\,2(n+\ell)} \mathop{\sum_{\eta_1,\ldots,\eta_{n+\ell}}}_{\eta'_1,\ldots,\eta'_{n+\ell}} \langle \eta_1 \rangle^{2\alpha} \cdots \langle \eta_{n+\ell} \rangle^{2\alpha} \cdot \langle \eta'_1 \rangle^{2\alpha} \cdots \langle \eta'_{n+\ell} \rangle^{2\alpha} \cdot$$
$$\big| \widehat{\gamma}^{(n+\ell)}(\eta_1,\ldots,\eta_{n+\ell};\eta'_1, \ldots, \eta'_{n+\ell})\big|^2 = C_3^{\,2(n+\ell)} \|S^{(n+\ell,\alpha)}\gamma^{(n+\ell)}\|_{L^2(\Lambda^{n+\ell} \times \Lambda^{n+\ell})}^2.$$
We hence deduce the following result:

\begin{proposition}
\label{non-resonant1}
Suppose that $\alpha \geq 0$. Then, there exists $C_2>0$, depending only on $\alpha$ such that for all $(\gamma^{(m)})_m \in \mathcal{N}$, for all $n,\ell \in \mathbb{N};j_1,\ldots j_{\ell}, k_1, \ldots, k_{\ell} \in \mathbb{N}$, with $j_1<k_1 \leq n+\ell, \ldots, j_{\ell}<k_{\ell} \leq n+\ell$ and $t_1,t_2, \ldots, t_{\ell+1} \in \mathbb{R}$, the following bound holds:
$$\big\|S^{(n,\alpha)}\mathcal{U}^{(n)}(t_1-t_2)\,[B^{\pm}_{j_1,k_1}]^{\omega}\,\mathcal{U}^{(n+1)}(t_2-t_3)\,[B^{\pm}_{j_2,k_2}]^{\omega} \cdots$$
$$\cdots \mathcal{U}^{(n+\ell-1)}(t_{\ell}-t_{\ell+1}) \,[B^{\pm}_{j_{\ell},k_{\ell}}]^{\omega} \,\gamma^{(n+\ell)}\big\|_{L^2(\Omega \times \Lambda^n \times \Lambda^n)} \leq C_2^{\,n+\ell} \|S^{(n+\ell,\alpha)} \gamma^{(n+\ell)}\|_{L^2(\Lambda^{n+\ell} \times \Lambda^{n+\ell})}.$$
\end{proposition}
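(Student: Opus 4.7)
The plan is to assemble the pieces already developed in Subsections 5.2 and 5.3 into a clean argument. First I would start from the explicit Duhamel expansion formula \eqref{Duhameln}, apply the operator $S^{(n,\alpha)}$, take moduli squared as in \eqref{Duhameln1A}, integrate in the spatial variables via Plancherel, and integrate in $\omega$ to arrive at the expression \eqref{Duhamelnsquared}. A key preliminary observation is that every exponential phase $e^{i(t_j-t_{j+1})(\cdots)}$ has modulus one, so these factors disappear harmlessly upon taking absolute values.

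Next I would exploit the fact that the random variables $h_\xi(\omega)$ are i.i.d.\ Bernoullis with $\mathbb{E}[h_\xi]=0$ and $h_\xi^2=1$. Each surviving monomial in $\omega$ must have the property that every frequency index appears with even multiplicity; equivalently, each element of the collection $\mathcal{S}$ of ``new'' frequencies generated by the collision operators applied to $\gamma^{(n+\ell)}$ must pair with an element of its twin $\tilde{\mathcal{S}}$ coming from $\overline{\gamma^{(n+\ell)}}$. Since $\#\mathcal{S}=\#\tilde{\mathcal{S}}=N$ for some $N\le 2(n+\ell)$, this pairing is a bijection, producing the constrained sum $\sum^{**}$.

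The crucial step, and the reason the class $\mathcal{N}$ was introduced, is now to invoke condition $(i)$ in Definition \ref{Class_N}: the support of $\widehat{\gamma}^{(n+\ell)}$ is confined to the set where $|\eta_1|>|\eta_2|>\cdots>|\eta_{n+\ell}|>|\eta'_1|>\cdots>|\eta'_{n+\ell}|$. This strict ordering forces the only admissible bijection to be the diagonal one, $\eta_i=\tilde\eta_i$ and $\eta'_i=\tilde\eta'_i$ for every $i$; any nontrivial pairing would make one of the two factors of $\widehat{\gamma}^{(n+\ell)}$ vanish. This collapses the double sum $\sum^{**}$ to a single sum over $(\eta_1,\ldots,\eta_{n+\ell},\eta'_1,\ldots,\eta'_{n+\ell})$ of $|\widehat{\gamma}^{(n+\ell)}|^2$.

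To close the estimate I would distribute the $S^{(n,\alpha)}$ derivatives using the fractional Leibniz rule \eqref{3DFractionalLeibnizRule} iteratively: each $\xi_j\in\mathcal{A}$ is a signed sum of the $N_j$ new frequencies $\eta^{j,1}_\cdot$, and similarly for $\xi'_j\in\mathcal{B}$, so $\langle\xi_j\rangle^\alpha$ is bounded by the product of the corresponding $\langle\eta^{j,1}_\cdot\rangle^\alpha$ up to a multiplicative constant. Applying this at most $2(n+\ell)$ times generates a total factor of $C_3^{\,2(n+\ell)}$, and the remaining Japanese brackets combine with the diagonal sum to reproduce $\|S^{(n+\ell,\alpha)}\gamma^{(n+\ell)}\|_{L^2}^2$ via Plancherel. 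Setting $C_2:=C_3$ gives the claim. The main potential obstacle — which in the general class of density matrices is fatal, as the example of Subsection \ref{Difficulties arising from higher-order Duhamel expansions} shows — is precisely the combinatorial explosion of allowable pairings; here it is defused entirely by the non-resonance condition, which is why $\alpha\ge 0$ suffices and no integral in time is needed.
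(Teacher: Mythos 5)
Your proposal is correct and follows essentially the same route as the paper: explicit expansion \eqref{Duhameln}--\eqref{Duhamelnsquared}, pairing of $\mathcal{S}$ with $\tilde{\mathcal{S}}$ forced by the zero-mean Bernoulli variables, collapse to the diagonal pairing via condition $i)$ of Definition \ref{Class_N}, and the iterated fractional Leibniz rule \eqref{3DFractionalLeibnizRule} yielding the factor $C_3^{\,2(n+\ell)}$ with $C_2=C_3$. The only cosmetic difference is that the paper applies the Leibniz rule before invoking non-resonance to select the diagonal pairing, while you do it in the opposite order; this does not affect the argument.
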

\begin{proof}
The result immediately follows from the above discussion if we take  $C_2$ to equal the constant $C_3$ from the estimate \eqref{3DFractionalLeibnizRule}.
\end{proof}

\begin{remark}
We observe that we are only using condition $i)$ in the definition of the class $\mathcal{N}$.
\end{remark}

\subsection{An application of the randomized estimate to the study of the dependently randomized GP hierarchy}
\label{The randomized Gross-Pitaevskii hierarchy in the class N}


We now apply the randomized estimate from Proposition \ref{non-resonant1} to the study of the randomized Gross-Pitaevskii hierarchy \eqref{randomizedGP}. We argue similarly as before and we fix $(\gamma^{(m)}(t))_m$ to be a time-dependent sequence in  $\mathcal{N}$. Here, we assume that the constant $C_1$ from part $ii)$ of the definition of $\mathcal{N}$ is uniform in $t$.  In this case, we say that $(\gamma^{(m)}(t))_m$ belongs to $\mathcal{N}$ \emph{uniformly in time}.
\\
\\
Given $k,n \in \mathbb{N}$, $t_k>0$ and $\omega \in \Omega$, we define $\sigma^{(k)}_{n;\,\omega}$ by:
\begin{equation}
\label{precisedefinition3}
\sigma^{(k)}_{n;\,\omega}(t_k):=
\end{equation}
$$(-i)^n \int_{0}^{t_k} \int_{0}^{t_{k+1}} \cdots \int_{0}^{t_{n+k-1}} \mathcal{U}^{(k)}(t_k-t_{k+1}) \, [B^{(k+1)}]^{\omega} \,\mathcal{U}^{(k+1)}(t_{k+1}-t_{k+2})$$
$$ [B^{(k+2)}]^{\omega} \cdots \,\mathcal{U}^{(n+k-1)}(t_{n+k-1}-t_{n+k}) \, [B^{(n+k)}]^{\omega} \, \gamma^{(n+k)}(t_{n+k}) \,dt_{n+k} \cdots dt_{k+2} \, dt_{k+1}.$$ 
\\
\\
This is the $k$-density matrix which we obtain after $n$ Duhamel iterations in the  hierarchy \eqref{randomizedGP}.
In this notation, the superscript $k$ denotes the number of particles and the subscript $n$ denotes the length of the Duhamel expansion. Furthermore, $\omega \in \Omega$ is a fixed randomization parameter.
\\
\\
Given $(\sigma^{(k)}_{n;\,\omega})_{n,k}$, we define:
\begin{eqnarray*}
\tilde{\gamma}^{(1)}&:=&\sigma^{(1)}_{n;\,\omega}\\
\tilde{\gamma}^{(2)}&:=&\sigma^{(2)}_{n-1;\,\omega}\\
\tilde{\gamma}^{(3)}&:=&\sigma^{(3)}_{n-2;\,\omega}\\
&&\vdots \\
\tilde{\gamma}^{(n)}&:=&\sigma^{(n)}_{1;\,\omega}.
\end{eqnarray*}

By construction, it follows that:

\begin{equation}
\notag
\begin{cases}
i \partial_t \tilde{\gamma}^{(k)} + (\Delta_{\vec{x}_k}-\Delta_{\vec{x'}_k})\tilde{\gamma}^{(k)}=\sum_{j=1}^{k} [B_{j,k+1}]^{\omega} (\tilde{\gamma}^{(k+1)})\\
\tilde{\gamma}^{(k)}\big|_{t=0}=0.
\end{cases}
\end{equation}
for all $k \in \{1,2,\ldots,n-1\}$. In other words, we obtain an \emph{arbitrarily long subset of solutions} to the full randomized Gross-Pitaevskii hierarchy \eqref{randomizedGP} with homogeneous initial data. Let us note that the property of belonging to the class $\mathcal{N}$ is not necessarily preserved under the evolution of \eqref{randomizedGP}. However, we do would not need to use this fact since we are studying the behavior of the fixed Duhamel expansions defined in \eqref{precisedefinition3}.

We will now state the main result of this section:

\begin{theorem}
\label{Smallness Bound 2} 
Suppose that $\alpha \geq 0$ and $k \in \mathbb{N}$.
Consider $\sigma^{(k)}_{n;\,\omega}$, defined as in \eqref{precisedefinition3} for $(\gamma^{(m)}(t))_m \in \mathcal{N}$, uniformly in time.
\\
\\
There exists $T>0$, depending only on $C_1$ and $\alpha$ such that:
$$\sup_{t \in [0,T]} \, \big\|S^{(k,\alpha)} \sigma^{(k)}_{n;\,\omega}(t)\big\|_{L^2 \big(\Omega \times \Lambda^k \times \Lambda^k \big)} \rightarrow 0\,\,\mbox{as}\,\,n \rightarrow \infty.
$$
\end{theorem}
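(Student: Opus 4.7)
The strategy is parallel to that of Theorem~\ref{Smallness Bound}: reduce to a sharp spatial/probabilistic estimate, absorb the factorial number of terms by the $t^n/n!$ gain from the time simplex, and conclude once $T$ is small enough. The essential difference is that, in the dependently randomized setting, Theorem~\ref{Theorem 1} cannot be iterated because there is only one $\omega \in \Omega$; we replace it by Proposition~\ref{non-resonant1}, which controls an \emph{entire} Duhamel string of arbitrary length $\ell$ in one shot, provided the terminal density matrix is non-resonant. Since $(\gamma^{(m)}(t))_m\in\mathcal{N}$ uniformly in time, this is precisely the input we have.

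Concretely, I would first apply Minkowski's inequality in the iterated time integral defining $\sigma^{(k)}_{n;\,\omega}(t_k)$ to bring the $L^2(\Omega\times\Lambda^k\times\Lambda^k)$ norm inside, and then note that the leading propagator $\mathcal{U}^{(k)}(t_k-t_{k+1})$ is unitary and commutes with $S^{(k,\alpha)}$, so it can be dropped. Next I would expand $[B^{(k+j)}]^{\omega}=\sum_{i=1}^{k+j-1}\bigl([B^{+}_{i,k+j}]^{\omega}-[B^{-}_{i,k+j}]^{\omega}\bigr)$ for every $j=1,\ldots,n$ and apply the triangle inequality. This decomposes the integrand into at most
$$\prod_{j=1}^{n} 2(k+j-1)\;=\;2^n\cdot\frac{(k+n-1)!}{(k-1)!}$$
strings, each of which has exactly the form handled by Proposition~\ref{non-resonant1} (taking its $n$ to be our $k$ and its $\ell$ to be our $n$). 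Invoking that proposition together with the a priori bound in clause $(ii)$ of the definition of $\mathcal{N}$ evaluated at $t_{n+k}$ (which is available uniformly in $t$ by hypothesis) bounds the spatial/probabilistic norm of each string by $C_2^{n+k}\cdot C_1^{n+k}$.

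Using the standard simplex identity $\int_{0}^{t_k}\int_{0}^{t_{k+1}}\cdots\int_{0}^{t_{n+k-1}}dt_{n+k}\cdots dt_{k+1}=t_k^n/n!$, the estimates above combine to
$$\big\|S^{(k,\alpha)}\sigma^{(k)}_{n;\,\omega}(t_k)\big\|_{L^2(\Omega\times\Lambda^k\times\Lambda^k)}\;\leq\;\frac{t_k^n}{n!}\cdot 2^n\cdot\frac{(k+n-1)!}{(k-1)!}\cdot (C_1C_2)^{n+k}\;=\;\binom{k+n-1}{n}(2\,C_1C_2\,t_k)^n(C_1C_2)^k.$$
For any fixed $k$, the binomial coefficient $\binom{k+n-1}{n}$ grows only polynomially in $n$, so choosing $T>0$ with $2\,C_1C_2\,T<1$ forces the right-hand side to vanish as $n\to\infty$ uniformly for $t_k\in[0,T]$. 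Since $C_2=C_2(\alpha)$, the resulting $T$ depends only on $C_1$ and $\alpha$, as asserted.

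The main obstacle, and the whole point of working inside the class $\mathcal{N}$, is justifying the use of Proposition~\ref{non-resonant1} in the second step above: as shown in Subsection~\ref{Difficulties arising from higher-order Duhamel expansions}, without the strict Fourier ordering $|\xi_1|>\cdots>|\xi_m|>|\xi'_1|>\cdots>|\xi'_m|$ on the support of $\widehat{\gamma}^{(n+k)}(t_{n+k})$ the pairing of frequencies induced by a single $\omega$ does not close an estimate of this type even for $n=2$. Once that structural point is in place, the rest of the argument is the same factorial-absorption bookkeeping used in the proof of Theorem~\ref{Smallness Bound}, with the conceptual simplification that now a single application of the randomized estimate handles the whole expansion rather than just one Duhamel step.
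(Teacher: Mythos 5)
Your proposal is correct and follows essentially the same route as the paper's proof: Minkowski's inequality in the time simplex, expansion of each full collision operator into its individual $[B^{\pm}_{j,\ell}]^{\omega}$ pieces, a single application of Proposition~\ref{non-resonant1} (rather than an iteration of Theorem~\ref{Theorem 1}) together with the a priori bound from clause $(ii)$ of the class $\mathcal{N}$, and absorption of the factorial number of terms via the $t_k^n/n!$ simplex gain. The only cosmetic differences are in the exact combinatorial bookkeeping (you track $2^n\frac{(k+n-1)!}{(k-1)!}$ strings and keep the binomial coefficient explicitly, whereas the paper writes $\frac{(n+k)!}{(k-1)!}$ and absorbs the growth via $n^k=\mathcal{O}_k(2^n)$), which affects neither the threshold for $T$ nor the conclusion.
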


\begin{proof}
Let us take  $t=t_k \in [0,T]$ and we compute:
$$S^{(k,\alpha)}\sigma^{(k)}_{n;\,\omega}(t_k)=$$
$$(-i)^n \int_{0}^{t_k} \int_{0}^{t_{k+1}} \cdots \int_{0}^{t_{n+k-1}} S^{(k,\alpha)}\,\mathcal{U}^{(k)}(t_k-t_{k+1}) \, [B^{(k+1)}]^{\omega} \,\mathcal{U}^{(k+1)}(t_{k+1}-t_{k+2})$$
$$ [B^{(k+2)}]^{\omega} \cdots \,\mathcal{U}^{(n+k-1)}(t_{n+k-1}-t_{n+k}) \, [B^{(n+k)}]^{\omega} \, \gamma^{(n+k)}(t_{n+k}) \,dt_{n+k} \cdots dt_{k+2} \, dt_{k+1}.$$ 
Hence, by Minkowski's inequality:
$$\big\|S^{(k,\alpha)} \sigma^{(k)}_{n;\,\omega}(t_k)\big\|_{L^2 \big(\Omega \times \Lambda^k \times \Lambda^k\big)}$$
$$\leq \int_{0}^{t_k} \int_{0}^{t_{k+1}} \cdots \int_{0}^{t_{n+k-1}} \big\|S^{(k,\alpha)}\,\mathcal{U}^{(k)}(t_k-t_{k+1}) \, [B^{(k+1)}]^{\omega} \,\mathcal{U}^{(k+1)}(t_{k+1}-t_{k+2})$$
$$ [B^{(k+2)}]^{\omega} \cdots \,\mathcal{U}^{(n+k-1)}(t_{n+k-1}-t_{n+k}) \, [B^{(n+k)}]^{\omega} \, \gamma^{(n+k)}(t_{n+k})\big\|_{L^2 \big(\Omega \times \Lambda^k \times \Lambda^k \big)}$$
$$ \,dt_{n+k} \cdots dt_{k+2} \, dt_{k+1}.$$ 
Since we are working in the class $\mathcal{N}$ of asymmetric non-resonant density matrices, we cannot apply the boardgame argument. Hence, we write out the full collision operators in terms of the individual collision operators. More precisely, we note that the integrand can be bounded from above by a sum of $\frac{(n+k)!}{(k-1)!}$ terms of the type estimated by Proposition \ref{non-resonant1}. We then use Proposition \ref{non-resonant1} to estimate the whole expression from above by:
$$\int_{0}^{t_k} \int_{0}^{t_{k+1}} \cdots \int_{0}^{t_{n+k-1}}  C_2^{\,n+k} \cdot \frac{(n+k)!}{(k-1)!} \cdot \big\|S^{(n+k,\alpha)}\gamma^{(n+k)}(t_{n+k})\big\|_{L^2(\Lambda^{n+k} \times \Lambda^{n+k})} \,dt_{n+k} \cdots dt_{k+2} \, dt_{k+1}.$$
By using the a priori assumption $ii)$ on $\gamma^{(n+k)}(t_{n+k})$, i.e. the fact that $(\gamma^{(m)}(t))_m \in \mathcal{N}$, uniformly in time, it follows that this quantity is:
$$\leq \int_{0}^{t_k} \int_{0}^{t_{k+1}} \cdots \int_{0}^{t_{n+k-1}}  C_2^{\,n+k} \cdot \frac{(n+k)!}{(k-1)!} \cdot C_1^{\,n+k} \,dt_{n+k} \cdots dt_{k+2} \, dt_{k+1},$$
which by \eqref{factorielintegral} is: 
$$\leq \frac{T^n}{n!} \cdot C_2^{\,n+k} \cdot \frac{(n+k)!}{(k-1)!} \cdot C_1^{\,n+k}.$$
From the previous analysis, it follows that there exists $M \sim C_1 \cdot C_2$, independent of $n$ and $k$, such that:
$$\sup_{t \in [0,T]} \, \big\|S^{(k,\alpha)} \sigma^{(k)}_{n;\,\omega}(t)\big\|_{L^2 \big(\Omega \times \Lambda^k \times \Lambda^k \big)} \leq \frac{T^n}{n!} \cdot M^{n+k} \cdot \frac{(n+k)!}{(k-1)!}.$$
Arguing analogously as in the proof of Theorem \ref{Smallness Bound}, this quantity is:
$$\lesssim_{k} \big(2MT\big)^n \cdot M^k.$$
We now choose $T$ sufficiently small such that $2MT \leq 1$ and we deduce that:
$$\sup_{t \in [0,T]} \, \big\|S^{(k,\alpha)} \sigma^{(k)}_{n;\,\omega}(t)\big\|_{L^2 \big(\Omega \times \Lambda^k \times \Lambda^k \big)} \rightarrow 0\,\,\mbox{as}\,\,n \rightarrow \infty.$$
By construction $T$ depends only on $C_1$ and on $C_2$. Since $C_2$ depends only on $\alpha$, it follows that $T$ depends only on $C_1$ and $\alpha$. The claim now follows.
\end{proof}

\begin{remark}
As in the proof of Theorem \ref{Smallness Bound}, we can deduce that $T \sim_{\alpha} \frac{1}{C_1}$, for the constant $C_1$ in the definition of the class $\mathcal{N}$.
\end{remark}

\end{document}